\documentclass[12pt]{amsart}

% paquetes
\input xy
\usepackage[english]{babel}
\usepackage[latin1]{inputenc}
\usepackage{graphicx}
\usepackage{amsmath}
\usepackage{amssymb}
\usepackage{amscd}
\usepackage{color}
\usepackage{oldgerm}
\usepackage{amsfonts}
\usepackage{newlfont}
\usepackage{longtable}
\usepackage{multirow}
\usepackage{xcolor}

\DeclareOldFontCommand{\rm}{\normalfont\rmfamily}{\mathrm}

\usepackage[all]{xy}

\usepackage{upref}
\parindent=0pt

\def\F{\Bbb F}

\def\N{\Bbb N}

\def\ad{\operatorname{ad}}

\def\Aut{\operatorname{Aut}}

\def\d{\operatorname{d}}

\def\det{\operatorname{det}}

\def\dim{\operatorname{dim}}

\def\End{\operatorname{End}}
\def\GL{\operatorname{GL}}

\def\Hom{\operatorname{Hom}}
\def\Ker{\operatorname{Ker}}

\def\Span{\operatorname{Span}}

\def\Im{\operatorname{Im}}

\def\D{\d}

\def\g{\frak g}
\def\gl{\frak{gl}}
\def\h{\frak h}

\def\a{\frak{a}}

\def\j{\frak{j}}

\def\ide{\frak{i}}

\theoremstyle{plain}\swapnumbers

\newtheorem{Theorem}{Theorem}[section]
\newtheorem{Lemma}[Theorem]{Lemma}
\newtheorem{Prop}[Theorem]{Proposition}

\newtheorem{Cor}[Theorem]{Corollary}
\newtheorem{Example}[Theorem]{Example}
\newtheorem{Remark}[Theorem]{Remark}

\newtheorem{claim}[Theorem]{Claim}

\setcounter{page}{1}

\title[On solvable quadratic Lie algebras]
{On solvable quadratic Lie algebras having an Abelian descending central ideal}

\author[Garc\'ia-Delgado et al]
{R. Garc\'{\i}a-Delgado$^{(a)}$, G. Salgado$^{(b)}$, O.A. S\'anchez-Valenzuela$^{(c)}$}
\address{(a)\&(c) Centro de Investigaci\'on en Matem\'aticas, A.C., 
Unidad M\'erida; Yucat\'an, M\'exico}
\address{(b) Facultad de Ciencias, Universidad Aut\'onoma de San Luis Potos\'{i}; 
San Luis Potos\'{\i}, M\'exico}
\address{(a)}
\email{rosendo.garcia@cimat.mx}
\address{(b)}
\email{gsalgado@fciencias.uaslp.mx, gil.salgado@gmail.com}
\address{(c)}
 \email{adolfo@cimat.mx}
\keywords {Abelian extension of Lie algebra; Heisenberg Lie algebra; 
Lie algebra Cohomology; solvable Lie algebra; 
Quadratic Lie algebra}

\subjclass{
Primary:
%%%%%%%%%%%%%%%%%%%%%%%%%%%%%%%%%%%%
%%%     
%%%     15Axx		Basic linear algebra
%%%     
%%%     15A63  	%%%   Quadratic and bilinear forms, inner products
%%%     
%%%%%%%%%%%%%%%%%%%%%%%%%%%%%%%%%%%%
%%%     
%%%     17Axx		General nonassociative rings
%%%   
17A45  	%%%   Quadratic algebras
%%%     
%%%%%%%%%%%%%%%%%%%%%%%%%%%%%%%%%%%%
%%%     
%%%     17Bxx   Lie algebras
%%%     
17B05  	%%%   Structure theory
%%%       17B20  	%%%   Simple, semisimple, reductive (super)algebras
%%%       17B30  	%%%   Solvable, nilpotent (super)algebras
%%%       17B40  	%%%   Automorphisms, derivations, other operators
%%%       17B55  	%%%   Homological methods in Lie (super)algebras
17B56  	%%%   Cohomology of Lie (super)algebras
%%%   
%%%%%%%%%%%%%%%%%%%%%%%%%%%%%%%%%%%%
 Secondary:
 %%%%%%%%%%%%%%%%%%%%%%%%%%%%%%%%%%%%
%%%     
%%%     15Axx		Basic linear algebra
%%%     
15A63  	%%%   Quadratic and bilinear forms, inner products
%%%     
%%%%%%%%%%%%%%%%%%%%%%%%%%%%%%%%%%%%
%%%     
%%%     17Axx		General nonassociative rings
%%%   
%%%       17A45  	%%%   Quadratic algebras
%%%     
%%%%%%%%%%%%%%%%%%%%%%%%%%%%%%%%%%%%
%%%     
%%%     17Bxx   Lie algebras
%%%     
%%%       17B05  	%%%   Structure theory
%%%       17B20  	%%%   Simple, semisimple, reductive (super)algebras
17B30  	%%%   Solvable, nilpotent (super)algebras
17B40  	%%%   Automorphisms, derivations, other operators
%%%       17B55  	%%%   Homological methods in Lie (super)algebras
%%%       17B56  	%%%   Cohomology of Lie (super)algebras
%%%   
%%%%%%%%%%%%%%%%%%%%%%%%%%%%%%%%%%%%
}

\date{\today}

\begin{document}

\maketitle

\begin{abstract}
Solvable Lie algebras
having at least one Abelian descending central ideal are studied.
It is shown that all such Lie algebras can be built up from
canonically defined ideals. The nature of such ideals
is elucidated and their construction is provided in detail.
An approach to study and to classify these Lie algebras is given through
the theory of extensions via appropriate cocycles and representations
on which a group action is naturally defined.
Also, necessary and sufficient conditions for the existence of 
invariant metrics on the studied extensions are given.
It is shown that any solvable quadratic Lie algebra $\g$
having an Abelian descending central ideal
is of the form $\g=\h\oplus\a\oplus\h^*$, where 
$\h^*\simeq \ide(\g)$ and $\a \oplus \h^{*}\simeq \j(\g)$ are in fact
two canonically defined Abelian ideals of $\g$ satisfying $\ide(\g)^\perp=\j(\g)$.
As an example, a classification of this type of quadratic Lie algebras is given
assuming that $\j(\g)/\ide(\g)$ is an $r$-dimensional vector space
and $\g/\j(\g)$ is the 3-dimensional Heisenberg Lie algebra.
\end{abstract}

\section*{Introduction}

Let $\g$ be a finite-dimensional Lie algebra
over a field $\Bbb F$ of characteristic zero. 
The Lie algebra $\g$ is said to be {\it quadratic\/} if it comes equipped
with a non-degenerate, symmetric, bilinear form, $B:\g\times\g\to\Bbb F$, satisfying,
$B([x,y],z)=B(x,[y,z])$, for any $x$, $y$ and $z$ in $\g$; $[x,y]$ being the Lie bracket
of $x$ and $y$ in $\g$. The bilinear form $B$ is said to be {\it invariant\/} if this property is satisfied.

\smallskip
A theorem by A. Medina and P. Revoy \cite{Med} states that an
indecomposable, non-semisimple, quadratic Lie algebra $\g$
can be build up from a {\it minimal isotropic ideal\/}
$I$ and a quadratic Lie algebra which is isomorphic to $I^{\perp}/I$
and not necessarily indecomposable. 
In fact, such a quadratic Lie algebra $\g$ has a Witt vector space decomposition
with respect to the minimal isotropic ideal $I$.

\smallskip
Even though the Medina-Revoy Theorem deals in a clever and general
way with indecomposable, non-semisimple, quadratic Lie algebras, its proof depends 
on the choice of the minimal ideal $I$. This choice, however, is not unique. 
Besides, even though the subspace $I^{\perp}/I$ has the structure of a quadratic Lie algebra,
it is not in general a Lie subalgebra of $\g$. Moreover, $I^{\perp}/I$ might be further decomposed 
into mutually orthogonal subspaces, even if the algebra $\g$ one started with was indecomposable.

\smallskip
Later on, I. Kath and M. Olbrich proposed 
an alternative approach in \cite{Kath} looking for
a {\it canonical isotropic ideal\/} that might play the role of $I$;
they succeded in discovering a way to get a canonical ideal, though it
might not be minimal. On the other hand, an important asset of \cite{Kath} is 
that it brings to the foreground interesting ideas pointing toward the use of
cochain complexes and cohomology techniques based on elementary group actions
build up from morphisms and representations.

\smallskip
It was the work of Kath and Olbrich \cite{Kath} what made us realize 
that there might be another approach to
the Witt decomposition of $\g$ associated to a 
minimal ideal; namely,
by considering non-Abelian cuadratic Lie algebras $\g$ having
at least one Abelian descending central ideal; 
{\it ie\/,}
$\g^{\ell}:=[\g,\g^{\ell-1}]$ with $[\g^{\ell},\g^{\ell}]=\{0\}$, for some $\ell \in \N$.
The way of doing it is to use this hypothesis in order to produce
two {\it characteristic} ({\it ie\/,} canonically defined)
{\it Abelian ideals\/,} $\ide(\g)$ and $\j(\g)$, with $\ide(\g)\subset\j(\g)$
(see \S{2.2} below).
Furthermore, these ideals are such that $[\g,\j(\g)]\subset\ide(\g)$,
which makes the case $\ide(\g)\varsubsetneq \j(\g)$ particularly interesting. 
Thus, one obtains a Lie algebra $\h=\g/\j(\g)$, 
and a vector space $\a=\j(\g)/\ide(\g)$,
together with representations of $\h$ in $\j(\g)$ and $\ide(\h)$ that have 
special properties due to the fact that $[\g,\j(\g)]\subset\ide(\g)$. 

\smallskip
It is clear that any non-Abelian Lie algebra having an
Abelian descending central ideal is solvable. 
It is also clear that the class of all such Lie algebras includes the nilpotent ones.
At this point we remark that the Abelian ideals $\ide(\g)$ and $\j(\g)$ referred to above
are different from those studied in \cite{Kath} when the Lie algebra is non-nilpotent.
On the other hand, although much has already been said about solvable quadratic Lie algebras,
the structure theorem provided in this work ({\bf Thm. \ref{nilp cuad}} below)
gives a simple way to determine whether a solvable Lie algebra
of the class under study admits an invariant metric or not.
Besides, {\bf Thm. \ref{nilp cuad}} is particularly easy to state and prove
as it only makes use of classical tools.
Said tools include a naturally defined differential map in appropriatly defined
chain of complexes adapted to the $\h=\g/\j(\g)$ action
(see {\bf Prop. \ref{anticonmutativo}} and {\bf Rmrk. 2.2}).

\smallskip
To illustrate the usefulness of {\bf Thm. \ref{nilp cuad}},
we have worked out in full detail in \S4
the classification, up to isomorphism, of the Lie algebras of the form
$\g=\h\oplus\a\oplus\h^*$, 
for which $\h$ is the $3$-dimensional Heisenberg Lie algebra, 
$\ide(\g)$ is isomorphic to $\h^*$ acted on by the coadjoint representation,
and $\j(\g)/\ide(\g)=\a \simeq\Bbb F^r$, with $r \geq 3$.
It turns out that there are nine different families of such
isomorphism classes (see {\bf Prop. \ref{clasif de algs}}),
and only a finite set of specific representives
inside four of them admit an invariant metric
(see {\bf Prop. \ref{algs con metrica}}).

\smallskip
\section{Background on Abelian Extensions}

\subsection{Abelian Extensions of Lie algebras}

Let $\g$ be a Lie algebra with Lie bracket $[\,\cdot\,,\,\cdot\,]:\g\times\g\to\g$
and let $\j$ be an Abelian ideal of $\g$. 
Let $\h$ be a vector subspace complementary to $\j$, so that $\g=\h \oplus \j$.
For each pair $x$, $y \in \h$, let $[x,y]_{\h}$ and $\Lambda(x,y)$ be the
components of $[x,y]$ along $\h$ and $\j$, so that,
\begin{equation}\label{descomposicion corchete}
[x,y]=[x,y]_{\h}+\Lambda(x,y).
\end{equation}
One also obtains a representation $R$ of $\h$ in $\j$ via,
$$
\h\ni x\mapsto R(x)=\left(\ad\vert_{\h}(x)\right)\!\vert_{\j}=
[x,\,\cdot\,]\vert_{\j}\in\gl(\j).
$$
Indeed, being given by the adjoint representation, $R$ satisfies,
$$
\aligned
R([x,y])&=\ad([x,y])\vert_{\j} =
R(x)\circ R(y) - R(y)\circ R(x)
\\
& =\ad([x,y]_{\h}+\Lambda(x,y))\vert_{\j}.
\endaligned
$$
By restricting the adjoint action to the ideal $\j$, we have
$\left[\,\Lambda(x,y)\,,\,\cdot\,\right]\vert_{\j}\equiv 0$, since $\Lambda(x,y)\in\j$
and $\j$ is Abelian. Thus, for any pair $x$, $y \in \h$,
$$
\ad([x,y])\vert_{\j} = \ad([x,y]_{\h})\vert_{\j}=R([x,y]_{\h})
$$
and therefore, for any $x$ and $y$ in $\h$, we have,
\begin{equation}\label{representacion R}
R([x,y]_{\h})=R(x)\circ R(y) - R(y)\circ R(x).
\end{equation}
Now, for any three elements in $\h$, say $x$, $y$ and $z$, we have,
$$
\left[x,[y,z]\right]=\left[x,[y,z]_{\h}\right]_{\h}+\Lambda(x,[y,z]_{\h})+R(x)\left(\Lambda(x,y)\right).
$$
Since  the Lie bracket $[\,\cdot\,,\,\cdot\,]$ satisfies the Jacobi identity, it follows that,
\begin{equation}
\begin{array}{ll}
\displaystyle{\sum_{\circlearrowleft\{x,y,z\}}}\!\!\!\!\!\!&\left[x,[y,z]_{\h}\right]_{\h}=0,\qquad\text{and}
\\
\displaystyle{\sum_{\circlearrowleft\{x,y,z\}}}\!\!\!\!\!\!&\left(R(x)(\Lambda(y,z)) -\Lambda([y,z]_{\h},x)\right)=0.
\end{array}
\end{equation}
The first equation states that $\h=\g/\j$ is a Lie algebra under $[\,\cdot\,,\,\cdot\,]_{\h}$,
whereas the second equation states that $\Lambda$ is a $2$-cocycle in the
cochain complex $C(\h;\j)$ of alternating multilinear maps $\h\times\cdots\times\h\to\j$
into the $\h$-module $\j$ defined by the representation $R$. That is,
$$
(\d\Lambda)(x,y,z) = \!\!\!\sum_{\circlearrowleft\{x,y,z\}}\!\!\!\left\{\,R(x)(\Lambda(y,z)) -\Lambda([y,z]_{\h},x)\right\}=0.
$$
Conversely, let $\h$ be a Lie algebra with Lie bracket $[\,\cdot\,,\,\cdot\,]_{\h}$
and let $\Lambda:\h\times\h\to\j$ be a $2$-cocycle with values in 
the $\h$-module $\j$ defined by a given representation $R:\h\to\gl(\j)$.
It is well known that the skew-symmetric bilinear map $[\,\cdot\,,\,\cdot\,]:\g\times\g\to\g$,
defined on the direct sum $\g=\h\oplus\j$ by means of,
\begin{equation}\label{corchete de ext abeliana}
\aligned
\,[x,y] & = [x,y]_{\h} + \Lambda(x,y),
\\
\,[x,v] & = R(x)(v),
\\
\,[v,w] & =0,
\endaligned
\end{equation}
for any $x,y \in \h$ and any $v,w \in \j$, {\it is
a Lie bracket in\/} $\g$. One says that the Lie algebra $\g$ so defined
{\it is an Abelian extension of $\h$ associated to the representation
$R:\h\to\gl(\j)$ and the $2$-cocycle\/} $\Lambda$ (see \cite{Che}).
We shall denote by $\h(\Lambda,R)$ the Lie algebra defined 
on the vector space $\g=\h\oplus\j$ with Lie bracket as in \eqref{corchete de ext abeliana}
in terms of the given $2$-cocycle $\Lambda$ with coefficients in the representation $R:\h\to\gl(\j)$. 

\smallskip
\subsection{On the Isomorphism Class of an Abelian extension}

It is well known that for a fixed representation $R:\h\to\gl(\j)$,
and hence, within a fixed cochain complex $C(\h;\j)$, the isomorhism
class of the Abelian extension defined by a $2$-cocylce $\Lambda$
is completely determined by its cohomology class $[\Lambda]$ (see \cite{Che}).
We are interested, however, in understanding how, the isomorphism class
of the Abelian extension defined by $[\Lambda]$, might 
be preserved, even if $[\Lambda]$ is moved into 
a different cohomology class $[\Lambda^\prime] =\gamma.[\Lambda]$  
under a group action that might move the cochain complex
$C(\h;\j)$ into $C(\h;\j)^\prime$ 
by moving the representation $R$ into $R^\prime=\gamma.R$.

\smallskip
Thus, we shall address the question of finding the most general conditions
on a linear map
$\Psi:\h(\Lambda,R)\to \h(\Lambda^\prime,R^\prime)$
to be a Lie algebra isomorphism.
We shall assume, however, that the Abelian ideal $\j$ in $\h(\Lambda,R)$ 
is the same as in $\h(\Lambda^\prime,R^\prime)$;
that is, we shall assume that $\j$ is somehow canonically defined; 
{\it eg\/,} as in \S2.2 below. Under this assumption, $\Psi$ has the form,
\begin{equation}\label{isomorfismo de ext abelianas}
\aligned
\Psi(x) & = g(x) + \Theta(x),\quad x \in \h,\\
\Psi(v) & = \sigma(v),\qquad \, \qquad \,v \in \j,
\endaligned
\end{equation}
where $g:\h\to\h$, $\sigma:\j\to\j$ and $\Theta:\h\to\j$
are linear maps, with $g$ and $\sigma$ invertible.
The isomorphism condition on $\Psi$ is that,
\begin{equation}\label{cond de isomorfismo}
\Psi([x,y])=[\Psi(x),\Psi(y)]^{\prime},\quad \forall \, x,y \in \g,
\end{equation}
where the Lie bracket $[\,\cdot\,,\,\cdot\,]^\prime$ is the one defined in
$\g^{\prime}=\h(\Lambda^\prime,R^\prime)$.
The answer to the question of when is $\h(\Lambda,R)$ isomorphic to
$\h(\Lambda^\prime,R^\prime)$ is given in the following:

\smallskip
\begin{Prop}\label{Prop de isomorfismo de extensiones abelianas}
{\sl Let $\Lambda$ and $\Lambda^\prime$ be $2$-cocyles
within the chain complexes $C(\h;\j)$ and $C(\h;\j)^\prime$, 
associated to the representations $R$ and $R^\prime$ of $\h$ in $\j$, respectively.
Two Abelian extensions $\g=\h(\Lambda,R)$ 
and $\g^\prime=\h(\Lambda^\prime,R^\prime)$ defined in the
underlying vector space $\g=\h\oplus\j$ as in 
\eqref{corchete de ext abeliana} are isomorphic, if and only if,
there are linear maps $g\in\GL(\h)$, $\sigma\in\GL(\j)$ and 
$\Theta\in\Hom_{\F}(\h,\j)$, such that for all $x,y \in \h$ and $v \in \j$:
\begin{equation}\label{condiciones de isomorfismo 1}
\aligned
g([x,y]_{\h})&= [g(x),g(y)]_{\h}
\\
\Theta([x,y]_{\h})+\sigma\left(\Lambda(x,y)\right) & =
\Lambda^\prime(g(x),g(y)) 
\\
&\quad + R^\prime(g(x))(\Theta(y))-R^\prime(g(y))(\Theta(x))
\\
\sigma\left(R(x)(v)\right) & = R^\prime(g(x))(\sigma(v)).
\endaligned
\end{equation}
In other words, if and only if 
$g\in\Aut(\h)$, $\sigma\in\GL(\j)$ and $\Theta\in\Hom_{\F}(\h,\j)$, satisfy,
\begin{equation}\label{condiciones de isomorfismo 2}
(g,\sigma).\Lambda = \Lambda^\prime + \d^\prime(g.\Theta),
\qquad\text{and}\qquad
(g,\sigma).R=R^\prime,
\end{equation}
where, $\d^\prime$ is the differential map of the
cochain complex $C(h;\j)^\prime$ associated to the representation $R^\prime$, and
\begin{equation}\label{act del grupo}
\aligned
g.\Theta \,(\,\cdot\,) & = \Theta\circ g^{-1}\,(\,\cdot\,) ,\\
(g,\sigma).R\,(\,\cdot\,) & = \sigma\circ R\,(\,g^{-1}(\,\cdot\,)\,)\circ \sigma^{-1}, \\
(g,\sigma).\Lambda\,(\,\cdot\,,\,\cdot\,) & =\sigma\left(\Lambda(\,g^{-1}(\,\cdot\,)\,,\,g^{-1}(\,\cdot\,)\,)\right).
\endaligned
\end{equation}
}
\end{Prop}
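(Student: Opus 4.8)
The plan is to exploit the explicit form \eqref{isomorfismo de ext abelianas} that $\Psi$ must take once $\j$ is assumed preserved, and to reduce the single identity \eqref{cond de isomorfismo} to a finite list of conditions by testing it on the three types of argument pairs allowed by the decomposition $\g=\h\oplus\j$. Since the bracket \eqref{corchete de ext abeliana} is bilinear and $\Psi$ is linear, it suffices to verify \eqref{cond de isomorfismo} for $(x,y)$ with both entries in $\h$, for $(x,v)$ with $x\in\h$ and $v\in\j$, and for $(v,w)$ with both entries in $\j$. Conversely, defining $\Psi$ by \eqref{isomorfismo de ext abelianas} from any triple $(g,\sigma,\Theta)$, the map is automatically invertible precisely when $g$ and $\sigma$ are (its matrix being block lower-triangular with diagonal blocks $g$ and $\sigma$), so that the homomorphism property is the only thing left to characterize. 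Thus both implications of the asserted equivalence follow from a single computation showing that a $\Psi$ of the form \eqref{isomorfismo de ext abelianas} is a Lie algebra homomorphism if and only if \eqref{condiciones de isomorfismo 1} holds.

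For the pair $x,y\in\h$ I would expand the left side as $\Psi([x,y]_\h+\Lambda(x,y))=g([x,y]_\h)+\Theta([x,y]_\h)+\sigma(\Lambda(x,y))$ and the right side, using \eqref{corchete de ext abeliana} in $\g'$ together with $[\Theta(x),\Theta(y)]'=0$ (as $\j$ is Abelian), as $[g(x),g(y)]_\h+\Lambda'(g(x),g(y))+R'(g(x))(\Theta(y))-R'(g(y))(\Theta(x))$. Comparing the $\h$-components yields $g([x,y]_\h)=[g(x),g(y)]_\h$, i.e. $g\in\Aut(\h)$, while comparing the $\j$-components yields exactly the second identity of \eqref{condiciones de isomorfismo 1}. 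For the pair $x\in\h$, $v\in\j$ both sides lie in $\j$ and the identity reduces to $\sigma(R(x)(v))=R'(g(x))(\sigma(v))$, the third identity; and for $v,w\in\j$ both sides vanish, imposing no condition. This establishes the equivalence of \eqref{cond de isomorfismo} with \eqref{condiciones de isomorfismo 1}.

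The remaining task is to recast \eqref{condiciones de isomorfismo 1} as \eqref{condiciones de isomorfismo 2} by absorbing the maps into the group action \eqref{act del grupo}. In the third identity I would substitute $x=g^{-1}(x')$ and use the invertibility of $\sigma$ to let $\sigma(v)$ range over all of $\j$; this turns $\sigma(R(g^{-1}(x'))(v))=R'(x')(\sigma(v))$ into the operator equality $\sigma\circ R(g^{-1}(x'))\circ\sigma^{-1}=R'(x')$, which is precisely $(g,\sigma).R=R'$. In the second identity I would again set $x=g^{-1}(x')$, $y=g^{-1}(y')$ and use $g\in\Aut(\h)$ to rewrite $[g^{-1}(x'),g^{-1}(y')]_\h=g^{-1}([x',y']_\h)$; the left-hand side then becomes $(g.\Theta)([x',y']_\h)+((g,\sigma).\Lambda)(x',y')$ and the correction terms on the right assemble into the coboundary $(\d'(g.\Theta))(x',y')=R'(x')((g.\Theta)(y'))-R'(y')((g.\Theta)(x'))-(g.\Theta)([x',y']_\h)$ of the complex $C(\h;\j)'$, giving $(g,\sigma).\Lambda=\Lambda'+\d'(g.\Theta)$.

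The computations are entirely routine; the only point requiring care is the bookkeeping in the last step. The step I expect to be the genuine crux is recognizing that, after the change of variables and the use of $g\in\Aut(\h)$, the three correction terms of the second identity in \eqref{condiciones de isomorfismo 1} are exactly the value of the differential $\d'$ applied to the transported $1$-cochain $g.\Theta$, with the signs matching the convention fixed by the differential recorded earlier in this section; getting this identification right, and invoking the invertibility of $\sigma$ to pass from a pointwise to an operator identity for the representations, is what converts the raw matching conditions into the clean cohomological statement \eqref{condiciones de isomorfismo 2}.
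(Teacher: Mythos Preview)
Your proposal is correct and follows exactly the approach the paper takes: the paper's proof simply asserts that \eqref{condiciones de isomorfismo 1} is immediate from \eqref{cond de isomorfismo} via the form \eqref{isomorfismo de ext abelianas}, and that \eqref{condiciones de isomorfismo 2} is a rewriting of \eqref{condiciones de isomorfismo 1} using the group action \eqref{act del grupo} and the standard differential, which is precisely what you have spelled out in detail. The case-by-case check on pairs from $\h\times\h$, $\h\times\j$, and $\j\times\j$, followed by the substitution $x=g^{-1}(x')$ to recognize the coboundary $\d'(g.\Theta)$, is the expected unpacking of the paper's terse ``immediate'' and ``simply rewrites.''
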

\begin{proof}
The proof that \eqref{condiciones de isomorfismo 1} follows from 
\eqref{cond de isomorfismo}, is immediate using 
\eqref{isomorfismo de ext abelianas}, whereas 
\eqref{condiciones de isomorfismo 2} simply rewrites 
\eqref{condiciones de isomorfismo 1}
using the obvious group action defined in \eqref{act del grupo} and the well known
definitions of the differential maps on $C(\h;\j)$ and $C(\h;\j)^\prime$ in terms of
$R$ and $R^\prime$, respectively.
\end{proof} 

\smallskip
\begin{Remark}\label{ISO}
{\rm
This result states 
what is needed for a map $\Psi$ to be 
an isomorphism from $\h(\Lambda,R)$ into $\h(\Lambda^\prime,R^\prime)$.
Observe how $\Psi$ depends on the data
$g\in\Aut(\h)$, $\sigma\in\GL(\j)$ and $\Theta\in\Hom_{\F}(\h,\j)$.
We have proved that $\Psi$ is a Lie algebra isomorphism if and only if 
the equations \eqref{condiciones de isomorfismo 1}
(equivalently \eqref{condiciones de isomorfismo 2})
are satisfied, taking into account that the pairs
$(g,\sigma)\in\Aut\h\times\GL(\j)$ act on the data $\{\Theta,\Lambda,R\}$ according to
\eqref{act del grupo}, even though $\Theta$ is a component of $\Psi$ itself.
Let $G=\Aut\h\times\GL(\j)$.
Then $\Theta$ combines with $(g,\sigma)\in G$
so as to produce
the coboundary term shown in \eqref{condiciones de isomorfismo 2}.
In particular, this is consistent with the following factorization of the isomorphism
$\Psi:\h(\Lambda,R)\to \h(\Lambda^\prime,R^\prime)$:
\begin{equation}\label{desc-isomorfismo-1}
\Psi=
\begin{pmatrix}
g & 0 \\
\Theta & \sigma 
\end{pmatrix}
=
\begin{pmatrix}
\operatorname{Id}_{\h} & 0 \\
\Theta\circ g^{-1} & \operatorname{Id}_{\j}
\end{pmatrix}
\circ
\begin{pmatrix}
g & 0 \\
0 & \sigma 
\end{pmatrix},\quad (g,\sigma)\in G.
\end{equation}
Let $\operatorname{Iso}\h(\Lambda,R)$ be the group that
preserves the isomorphism class of the Lie algebra $\h(\Lambda,R)$.
The assignment $\Psi\mapsto (g,\sigma)$ defines a group
epimorphism $\operatorname{Iso}\h(\Lambda,R)\to G$
whose kernel is isomorphic to the Abelian group $\Hom_{\F}(\h,\j)$.
Thus,} 
$$
\operatorname{Iso}\h(\Lambda,R) \simeq G \rtimes \Hom_{\F}(\h,\j).
$$
\end{Remark}

\smallskip
Now, the next result is also a simple and straightforward computation 
from the definitions involved.  

\smallskip
\begin{Prop}\label{Prop de equivariancia}{\sl 
Let $\d$ and $\d^\prime$ be the differential maps
in the cochain complexes $C(\h;\j)$ and $C(\h;\j)^\prime$ defined by the representations 
$R:\h\to\gl(\j)$, and  $R^\prime:\h\to\gl(\j)$, respectively.
Let $\gamma=(g,\sigma)\in G=\Aut(\h)\times\GL(\j)$, and let
$\Phi(\gamma):C(\h;\j)\to C(\h;\j)$ be the $G$-action
given by $\lambda\mapsto \Phi(\gamma)(\lambda) = \gamma.\lambda$,
where 
$$
(\gamma.\lambda)(\,\cdot\,,\ldots,\,\cdot\,)=
\sigma\left(\,\lambda(g^{-1}(\,\cdot\,),\ldots,g^{-1}(\,\cdot\,))\,\right).
$$
Then,
\begin{equation}\label{equivariancia}
\d^\prime\circ\,\Phi(\gamma)=\Phi(\gamma)\circ \d
\quad\Longleftrightarrow\quad
R^\prime = \gamma.R\,,
\end{equation}
where,
$$
\gamma.R\,(x) = \sigma\,\circ\,R\left( g^{-1}(x)\right)\,\circ\,\sigma^{-1},
\quad\forall\,x\in\h.
$$
}
\end{Prop}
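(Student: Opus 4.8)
The plan is to expand both composites $\d'\circ\Phi(\gamma)$ and $\Phi(\gamma)\circ\d$ on an arbitrary cochain and compare them term by term, exploiting the fact that the Chevalley--Eilenberg differential splits into a part that sees only the representation and a part that sees only the bracket of $\h$. Concretely, for $\lambda\in C^{n}(\h;\j)$ one has, with the sign conventions already fixed in the excerpt,
$$
(\d\lambda)(x_{0},\ldots,x_{n})=\sum_{i=0}^{n}(-1)^{i}R(x_{i})\,\lambda(x_{0},\ldots,\widehat{x_{i}},\ldots,x_{n})+\sum_{0\le i<j\le n}(-1)^{i+j}\lambda([x_{i},x_{j}]_{\h},x_{0},\ldots,\widehat{x_{i}},\ldots,\widehat{x_{j}},\ldots,x_{n}),
$$
and likewise for $\d'$ with $R$ replaced by $R'$. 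The key structural remark is that both complexes are built over the \emph{same} Lie algebra $\h$, so the second (bracket) sum is literally the same operator in $\d$ and in $\d'$; only the first (representation) sum records the difference between $R$ and $R'$.

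First I would compute $\Phi(\gamma)(\d\lambda)$ by applying $\sigma$ to $(\d\lambda)$ evaluated at $(g^{-1}(x_{0}),\ldots,g^{-1}(x_{n}))$, and $\d'(\gamma.\lambda)$ directly from the displayed formula for $\d'$. The bracket contributions that appear are
$$
\sum_{0\le i<j\le n}(-1)^{i+j}\sigma\bigl(\lambda([g^{-1}(x_{i}),g^{-1}(x_{j})]_{\h},\ldots)\bigr)
\quad\text{and}\quad
\sum_{0\le i<j\le n}(-1)^{i+j}\sigma\bigl(\lambda(g^{-1}([x_{i},x_{j}]_{\h}),\ldots)\bigr),
$$
and these coincide term by term precisely because $g\in\Aut(\h)$ forces $g^{-1}\in\Aut(\h)$, whence $g^{-1}([x_{i},x_{j}]_{\h})=[g^{-1}(x_{i}),g^{-1}(x_{j})]_{\h}$. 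Thus the bracket parts of the two composites agree unconditionally, and the whole equivalence is governed by the representation sums.

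Writing $w_{i}=\lambda(g^{-1}(x_{0}),\ldots,\widehat{g^{-1}(x_{i})},\ldots,g^{-1}(x_{n}))\in\j$ for the common value occurring in both computations, the representation sums read $\sum_{i}(-1)^{i}\,\sigma\bigl(R(g^{-1}(x_{i}))\,w_{i}\bigr)$ for $\Phi(\gamma)\circ\d$ and $\sum_{i}(-1)^{i}\,R'(x_{i})\,\sigma(w_{i})$ for $\d'\circ\Phi(\gamma)$. Hence the operator identity $\sigma\circ R(g^{-1}(x))=R'(x)\circ\sigma$ for every $x\in\h$ --- that is, $R'=\gamma.R$ --- makes the two sums equal term by term, which yields the implication $R'=\gamma.R\Rightarrow\d'\circ\Phi(\gamma)=\Phi(\gamma)\circ\d$.

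For the converse I would \emph{not} try to disentangle the general $n$-degree identity, where a priori distinct terms could conceivably cancel and mask a discrepancy; this is the one point where some care is genuinely required. Instead I would evaluate the assumed equality on degree-zero cochains, where the bracket sum is empty so that nothing can hide. A $0$-cochain is simply a vector $v\in\j$ with $(\d v)(x)=R(x)v$ and $\Phi(\gamma)v=\sigma(v)$, so equivariance reads $\sigma(R(g^{-1}(x))v)=R'(x)\sigma(v)$ for all $x\in\h$ and all $v\in\j$. Since $v$ ranges over all of $\j$ and $\sigma$ is invertible, this is exactly $R'(x)=\sigma\circ R(g^{-1}(x))\circ\sigma^{-1}=(\gamma.R)(x)$, completing the equivalence. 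The remaining work is the routine bookkeeping of signs and hat positions when $\sigma$ is pulled linearly through the sums; the genuine content is the two observations that $g\in\Aut(\h)$ neutralises the bracket part and that degree zero isolates the representation relation.
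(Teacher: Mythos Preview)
Your proof is correct and is exactly the ``simple and straightforward computation from the definitions involved'' that the paper invokes without writing out; the paper gives no further argument. Your isolation of the converse on degree-zero cochains is a clean way to extract the relation $R'=\gamma.R$ without worrying about spurious cancellations in higher degree.
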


\smallskip
\begin{Remark}\label{estabilidad}
{\rm
The property $\d^\prime\circ\,\Phi(\gamma) = \Phi(\gamma)\circ \d$
states that $\d\Lambda = 0$ {\it if and only if\/} $\d^\prime(\Phi(\gamma)\Lambda)=0$.
From the statement of 
{\bf Prop. \ref{Prop de isomorfismo de extensiones abelianas}}, 
we conclude that 
$\h(\Lambda,R)\simeq\h(\Lambda^\prime,R^\prime)$ if and only if
$R^\prime = \gamma.R$ and 
$\Phi(\gamma)\Lambda = \Lambda^\prime + \d^\prime(g.\Theta)$,
which certainly makes $\d^\prime(\Phi(\gamma)\Lambda)=0$
when $\d^\prime\Lambda^\prime=0$. 
That is, the subgroups 
defined by the cocycles and the coboundaries, respectively,
are stable under the group action $\Phi(\gamma)$.
It is also clear that if $R^{\prime}$ is not in the $G$-orbit of $R$, 
then $\h(\Lambda,R)$ cannot be isomorphic to $\h(\Lambda^{\prime},R^{\prime})$.
}
\end{Remark}

\smallskip
The next result deals with a special case of 
{\bf Prop. \ref{Prop de isomorfismo de extensiones abelianas}};
namely, the case when the differential map in the cochain complex $C(\h;\j)$ 
is fixed because the representation $R$ is fixed. 
Most of the classical results for Abelian extensions are obtained within
a single cohomology theory through a fixed representation. By comparing 
{\bf Prop. \ref{Prop de isomorfismo de extensiones abelianas}}
with  {\bf Prop. \ref{Prop equivariancia-isotropia}} below, it is clear
that by restricting the framework to a single cohomology theory, 
there will be several Lie algebras
in the isomorphism class of $\h(\Lambda, R)$ that can never be reached
by changing the cocycle $\Lambda$ in the form,
$\Lambda\mapsto\Lambda^\prime=\Phi(\gamma)\Lambda$
modulo a coboundary. This is the difference between
\eqref{act-cociclo} below and the most general
relationship found in \eqref{condiciones de isomorfismo 2}.

\smallskip
\begin{Prop}\label{Prop equivariancia-isotropia}{\sl
Fix the representation $R:\h\to\gl(\j)$ 
and restrict the group action to pairs $(g,\sigma)$ in the isotropy subgroup $G_R \subset G$
of $R$. Let $\d$ be the differential map of the cochain complex $C(\h;\j)$.
Then
\begin{equation}\label{equivariancia-isotropia}
\d\circ\,\Phi(\gamma) = \Phi(\gamma)\circ \d
\quad\Longleftrightarrow\quad
\gamma=(g,\sigma)\in G_R\,;
\end{equation}
that is, $\d$ is $G_R$-equivariant. In particular, if
$\Lambda$ and $\Lambda^\prime$ are $2$-cocylces, then
$\h(\Lambda,R)\simeq\h(\Lambda^\prime,R)$
if and only if there are maps $g\in\Aut(\h)$, $\sigma\in\GL(\j)$ and $\Theta\in\Hom(\h,\j)$,
such that,
\begin{equation}\label{act-cociclo}
\Phi(\gamma)(\Lambda)=\Lambda^\prime + \d(g.\Theta),
\qquad\text{with}\quad
\gamma=(g,\sigma)\in G_R.
\end{equation}
}
\end{Prop}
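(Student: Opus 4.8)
The plan is to deduce both assertions by specializing the two preceding propositions to the case $R^\prime=R$. First I would observe that when the target representation coincides with the source one, the cochain complexes $C(\h;\j)$ and $C(\h;\j)^\prime$ are literally the same object: they share the same cochain spaces of alternating multilinear maps into $\j$, and their differentials agree, $\d^\prime=\d$, since the differential is built solely from the bracket of $\h$ and the representation $R$. Under this identification, the equivalence of \textbf{Prop. \ref{Prop de equivariancia}} reads
$$
\d\circ\Phi(\gamma)=\Phi(\gamma)\circ\d
\quad\Longleftrightarrow\quad
R=\gamma.R.
$$
It then remains only to recognize the right-hand condition as membership in the isotropy subgroup: by the defining action $\gamma.R(x)=\sigma\circ R(g^{-1}(x))\circ\sigma^{-1}$ of \eqref{act del grupo}, the equation $R=\gamma.R$ holds for all $x\in\h$ precisely when $\gamma=(g,\sigma)$ fixes $R$, that is, exactly when $\gamma\in G_R$. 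This yields \eqref{equivariancia-isotropia} and the $G_R$-equivariance of $\d$.

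For the isomorphism criterion I would invoke \textbf{Prop. \ref{Prop de isomorfismo de extensiones abelianas}} with $R^\prime=R$. That result gives $\h(\Lambda,R)\simeq\h(\Lambda^\prime,R^\prime)$ if and only if there exist $g\in\Aut(\h)$, $\sigma\in\GL(\j)$ and $\Theta\in\Hom_{\F}(\h,\j)$ satisfying $(g,\sigma).R=R^\prime$ together with $(g,\sigma).\Lambda=\Lambda^\prime+\d^\prime(g.\Theta)$. Setting $R^\prime=R$, the first equation becomes $\gamma.R=R$, forcing $\gamma=(g,\sigma)\in G_R$; and since $\d^\prime=\d$ in this situation, the second equation is exactly $\Phi(\gamma)(\Lambda)=\Lambda^\prime+\d(g.\Theta)$, which is \eqref{act-cociclo}. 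Conversely, any data $(g,\sigma,\Theta)$ with $\gamma\in G_R$ fulfilling \eqref{act-cociclo} satisfies the hypotheses of \textbf{Prop. \ref{Prop de isomorfismo de extensiones abelianas}} and hence produces the desired isomorphism.

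The proof is essentially formal, so there is no genuine computational obstacle; the one point requiring care is the explicit identification of the fixed-point locus $\{\gamma\in G : \gamma.R=R\}$ with the isotropy subgroup $G_R$, and the correlative observation that fixing $R$ fixes the whole differential complex, so that $\d^\prime$ and $\d$ may be conflated. These two remarks are precisely what make the general equivariance of \textbf{Prop. \ref{Prop de equivariancia}} collapse to the $G_R$-equivariance asserted here, and both are immediate from the definitions \eqref{act del grupo} of the group action.
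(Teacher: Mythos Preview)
Your proposal is correct and matches the paper's approach: the paper presents this proposition as an immediate specialization of \textbf{Prop.~\ref{Prop de isomorfismo de extensiones abelianas}} and \textbf{Prop.~\ref{Prop de equivariancia}} to the case $R^\prime=R$, and gives no separate proof. Your write-up makes explicit exactly the two observations the paper leaves implicit, namely that $R^\prime=R$ forces $\d^\prime=\d$ and that $\gamma.R=R$ is by definition membership in $G_R$.
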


\smallskip
\section{Extensions Defined by Two Canonical Abelian Ideals}

\subsection{Isomorphisms of Abelian Extensions Defined by Two Can\-onical Ideals}

Let $\g$ be a Lie algebra with Lie bracket $[\,\cdot\,,\,\cdot\,]$.
We shall show in \S{2.2} below how to define
(under the special hypothesis that $\g$ has a descending central ideal)
two characteristic ideals, $\ide=\ide(\g)$ and $\j=\j(\g)$ of $\g$,
satisfying the following properties:
\begin{equation}\label{condiciones para los ideales}
\text{(a)} \ \ \ \j\ \ \text{is abelian;} \qquad\quad
\text{(b)} \ \ \  \ide \subset \j; \qquad\quad
\text{(c)} \ \ \  [\g,\j] \subset \ide.
\end{equation}
We shall also see in  {\bf Lemma \ref{lema auxiliar 2}} that if $\g$ admits an invariant metric, 
then $\ide^{\perp}=\j$. For the time being, however, we shall first restrict ourselves
to the properties (a), (b), and (c) in \eqref{condiciones para los ideales}.

\smallskip
Let $\h=\g/\j$, and decompose $\g$
in the form $\g=\h\oplus \j$. We shall also assume that $\ide\ne\j$,
and therefore, $\a = \j/\ide\ne\{0\}$. Thus, we may further decompose $\g$
in the form $\g=\h\oplus\a\oplus\ide$, and write  its elements as,
$$
\g\ni x+v+\theta;
\qquad x\in\h\,,\ v\in \a\,,\ \theta\in \ide\,,\ v+\theta\in \j.
$$
Using the fact that $\j$, and hence $\ide$ by (b) in
\eqref{condiciones para los ideales}, are Abelian, we have,
\begin{equation}\label{corchete en terminos de los ideales}
[x+u+\theta, y+v+\eta] = [x,y]+[x,v+\eta]-[y,u+\theta],
\end{equation}
where $[x,y]\in\g=\h\oplus \j$, and $[x,v+\eta]$ and $[y,u+\theta]$ belong to $\ide$ 
because of (c) in \eqref{condiciones para los ideales}. 
Now, decompose $\Lambda(x,y)$ in the form,
$\Lambda(x,y)=\lambda(x,y)+\mu(x,y)$, with 
$\lambda(x,y)\in\a$ and $\mu(x,y)\in\ide$, respectively. 
Moreover, the representation $R:\h\to\gl(\j)=\gl(\a\oplus\ide)$
also decomposes by means of (c), into
$[x,v]=\varphi(x)(v)\in\ide$ and $[x,\theta]=\rho(x)(\theta)\in\ide$, 
respectively, for any $x\in\h$.
That is, \eqref{corchete en terminos de los ideales} has now the following 
finer structure produced by the ideals $\ide$ and $\j$:
\begin{equation}\label{corchete en terminos de los ideales 2}
{\aligned
\,[x,y] & = [x,y]_{\h} + \lambda(x,y) + \mu(x,y),
\\
[x,v] & = \varphi(x)(v),
\\
[x,\theta] & = \rho(x)(\theta),
\\
[v,w] & = [v,\eta] = [\theta,\eta] = 0,
\endaligned}
\end{equation}
for any $x,y  \in \h$, $v,w \in \a$ and $\theta,\eta \in \ide$.
In particular, it follows that, for each $x\in\h$, 
$R(x):\a \oplus \ide \to \a \oplus \ide$, is given by
\begin{equation}\label{desc de la rep R}
R(x) = \begin{pmatrix} 0 & 0 \\
\varphi(x) & \rho(x)\end{pmatrix}
:
\begin{pmatrix} v \\
\theta \end{pmatrix}
\mapsto
\begin{pmatrix} 0 \\
\varphi(x)(v) + \rho(x)(\theta)\end{pmatrix}.
\end{equation}
Moreover, the property \eqref{representacion R} yields the following identities:
\begin{equation}\label{condiciones para la desc de R}
\aligned
\varphi([x,y]) & = 
\varphi([x,y]_{\h}) =
\rho(x)\circ\varphi(y)-\rho(y)\circ\varphi(x),
\\
\rho([x,y]) & = 
\rho([x,y]_{\h}) =
\rho(x)\circ\rho(y)-\rho(y)\circ\rho(x),
\endaligned
\end{equation}
for all $x,y \in \h$. The equalities from the middle terms to the left hand sides
follow from $R([x,y])=R([x,y]_{\h})$
which is a consequence of the fact that the ideal $\j$ is Abelian.
We shall use, however, the equalities from the middle terms
to the right hand sides. They state that
$\rho:\h\to\gl(\ide)$ {\it is a representation of\/} $\h$ and that
$\varphi$ {\it is a $1$-cocycle in $C\left(\h;\Hom_{\F}(\a,\ide)\right)$
for the representation $\bar{\rho}:\h\to\gl(\Hom_{\F}(\a,\ide))$
defined by\/} $\bar{\rho}(x)(\tau)=\rho(x)\circ \tau$ on cochains
$\tau:\h\times\cdots\times\h\to\Hom_{F}(\a,\ide)$. 
Observe that the representation $\bar{\rho}:\h \to \gl(\Hom_{\F}(\a,\ide))$,
is no other than the natural 
tensor product representation in 
$\ide\otimes \a^{*}\simeq \Hom_{\F}(\a,\ide)$, 
when $\a$ is the trivial $\h$-module.

\smallskip
These properties on $\varphi$ and $\rho$ are needed for making $\g$ into a Lie algebra.
Indeed, in order to look at the information contained in Jacobi identity, one may
compute Lie brackets of the form,
$$
\left[\,[\,x+u+\theta\,,\,y+v+\eta\,]\,,\,z+w+\xi\,\right],
$$
and then take the corresponding cyclic sum.
It is a straightforward matter to show that the Lie bracket above is equal to,
$$
\aligned
\left[\,[\,x\,,\,y\,]\,,\,z\,\right] & +  \varphi([x,y])(w) + \rho([x,y])(\xi)
+
\rho(z)(\varphi(y)(u))
\\
& \quad 
+\rho(z)(\rho(y)(\theta))-\rho(z)(\varphi(x)(v))-\rho(z)(\rho(x)(\eta))
\endaligned
$$
The corresponding cyclic sum of three terms like this
involves Jacobi identity for the Lie bracket in 
$\g$ of three elements in $\h$ and cyclic sums over the triples $\{(x,u,\theta),(y,v,\eta),(z,w,\xi)\}$
of terms similar to the last six in this expression.
It is easy to verify that one is left with a sum of terms belonging to $\ide$ which
vanish identically because of the properties of the representation $R$
of $\g$ just observed in terms of $\varphi$ and $\rho$.

\smallskip
On the other hand, by writing down $\left[\,[\,x\,,\,y\,]\,,\,z\,\right]$ 
for the triple $\{x,y,z\}$ of elements from $\h$, but this time in terms
of the $\h$-component $[\,\cdot\,,\,\cdot\,]_{\h}$
of the Lie bracket $[\,\cdot\,,\,\cdot\,]$ in $\g$, the Jacobi identity
will produce cyclic sums of three different expressions 
corresponding to the components along the
direct sumands $\h$, $\a$ and $\ide$, since,
$$
\aligned
\left[\,[\,x\,,\,y\,]\,,\,z\,\right]
& = 
\left[\,
[x,y]_{\h}+\lambda(x,y)+\mu(x,y)\,,\,z
\,\right]
\\ 
& = 
\left[\,
[x,y]_{\h}\,,\,z
\,\right]_{\h}
+ \lambda([x,y]_{\h} \, ,z) + \mu([x,y]_{\h} \, ,z)
\\
&\quad
-
\left[\,
z\,,\,\lambda(x,y)+\mu(x,y)
\,\right]
\\
& = 
\left[\,
[x,y]_{\h}\,,\,z 
\,\right]_{\h}
+ \lambda([x,y]_{\h}\, ,z) 
\\
&\quad 
+ \mu([x,y]_{\h}\, ,z) - \varphi(z)(\lambda(x,y)) - \rho(z)(\mu(x,y)).
\endaligned
$$
We therefore end up with,
\begin{equation}\label{desc de corchete en terminos de los ideales}
\aligned
\sum_{\circlearrowleft\{x,y,z\}} 
&
\!\!\!\left[\,
[x,y]_{\h}\,,\,z
\,\right]_{\h}
=0,\qquad\qquad
\sum_{\circlearrowleft\{x,y,z\}}
\!\!\!\lambda([x,y]_{\h}\, ,z)
=0,
\\
\sum_{\circlearrowleft\{x,y,z\}}
&
\!\!\!\left\{\,
\mu([x,y]_{\h}\, ,z)
- \varphi(z)\!\left(\lambda(x,y)\right) - \rho(z)(\mu(x,y))
\,\right\}
=0.
\endaligned
\end{equation}
The first is just Jacobi identity for the Lie algebra $\h=\g/\j$.
The second one is easy to understand for the
skew-symmetric bilinear map $\lambda:\h\times\h\to\a$
taking values in the {\it trivial $\h$-module\/} $\a$. In fact,
ordinary Lie algebra cohomology lets us write
$$
(\d\lambda) (x,y,z) = -\!\!\!\sum_{\circlearrowleft\{x,y,z\}}
\!\!\!\lambda([x,y]_{\h},z) = 0.
$$
On the other hand, for the
skew-symmetric bilinear map $\mu:\h\times\h\to\ide$
into the $\h$-module defined by the
representation $\rho:\h\to\gl(\ide)$, we have,
$$
(\d\mu) (x,y,z) =\!\!\!\sum_{\circlearrowleft\{x,y,z\}}
\!\!\!\left(
\,\rho(z)(\mu(x,y)) -\mu([x,y]_{\h},z)\,
\right).
$$
In particular, the third cyclic sum in 
\eqref{desc de corchete en terminos de los ideales} 
states that,
\begin{equation}\label{interpetacion1}
(\d\mu) (x,y,z) + \!\!\!\sum_{\circlearrowleft\{x,y,z\}}
\!\!\!\varphi(z)\left(\lambda(x,y)\right)=0.
\end{equation}
We may interprete the term
$\sum_{\circlearrowleft}\varphi(z)(\lambda(x,y))$
as the result of applying a map 
$e_{\varphi}:C^2(\h;\a)\to C^3(\h;\ide)$, induced by
$\varphi:\h\to\Hom_{\F}(\a,\ide)$ 
on the  chain complexes involved, as follows:
$$
C^2(\h;\a)\ni\lambda(\,\cdot\,,\,\cdot\,)\ \mapsto\ 
e_{\varphi}(\lambda)(x,y,z)=
\!\!\!\sum_{\circlearrowleft\{x,y,z\}}
\!\!\!\varphi(x)(\lambda(y,z))\in C^3(\h;\ide),
$$
where the cyclic sum in the right hand side is taken over
the arguments, thus producing an alternating trilinear
map $e_{\varphi}(\lambda):\h\times\h\times\h\to\ide$ from the initial alternating map
$\lambda:\h\times\h\to\a$. 
In general, one may define a {\it degree-one map $e_{\varphi}$
of cochain complexes\/} by means of,
$$
C^n(\h;\a) 
\ni \lambda \ \mapsto \ e_{\varphi}(\lambda) \in C^{n+1}(\h;\ide),
$$
where, for any $x_1,\ldots, x_{n+1}$ in $\h$,
\begin{equation}
\label{operador e}
e_{\varphi}(\lambda)(x_1,\ldots,x_{n+1})  =
\sum_{i=1}^{n+1}
(-1)^{i+1}\varphi(x_i)(\lambda(x_1,\ldots,\widehat{x_{i}},\ldots, x_{n+1})).
\end{equation}
The following result elucidates the behavior of $e_{\varphi}$ with respect to the 
corresponding differential maps on $C(\h;\a)$ and $C(\h;\ide)$ which,
for the statement and proof, we shall denote by $\d_{\a}$ and $\d_{\ide}$, respectively.

\smallskip
\begin{Prop}\label{anticonmutativo}{\sl
Let $(\h,[\cdot,\cdot]_{\h})$ be a Lie algebra. Let $\a$
be a finite-dimensional trivial $\h$-module and let $\ide$
be the $\h$-module given by the representation $\rho:\h\to\gl(\ide)$.
Let $\bar{\rho}:\h \to \gl(\Hom_{\F}(\a,\ide))$ be the
tensor product representation, so that $\bar{\rho(x)}(T)=\rho(x)\,\circ\,T$,
for any $x \in \h$ and any $T \in \Hom_{\F}(\a,\ide)$.
Let $\varphi \in C(\h;\Hom_{\F}(\a,\ide))$ be a 1-cocycle with coefficients
in the representation $\bar{\rho}$. The degree-one map of cochain complexes
$e_{\varphi}:C(\h;\a) \to C(\h;\ide)$ defined by \eqref{operador e},
satisfies, 
$$
(\,e_{\varphi} \,\circ \d_{\a}\,)\,\vert_{\,C^{n-1}(\h;\a)}
=-\,(\,\d_{\ide}\, \circ\,\, e_{\varphi}\,)\,\vert_{\,C^{n-1}(\h;\a)},
$$
for each $n\in\N$; that is, the following diagram anticommutes:
$$
\xymatrix{ 
C^{n-1}(\h;\a){\ar[r]^{\d_{\a}}}{\ar[d]_{e_{\varphi}}} & C^n(\h;\a){\ar[d]^{e_{\varphi}}}\\
C^n(\h;\ide){\ar[r]^{\d_{\ide}}} & C^{n+1}(\h;\ide).
}
$$
} 
\end{Prop}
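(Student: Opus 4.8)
The plan is to recognize $e_{\varphi}$ as a \emph{cup product} with the $1$-cochain $\varphi$ and to deduce the anticommutation from the graded Leibniz rule. The evaluation map $\a\otimes\Hom_{\F}(\a,\ide)\to\ide$, $(v,T)\mapsto T(v)$, is a morphism of $\h$-modules when $\a$ carries the trivial action, $\Hom_{\F}(\a,\ide)$ carries $\bar{\rho}$, and $\ide$ carries $\rho$: indeed $\rho(x)(T(v))=(\rho(x)\circ T)(v)=(\bar{\rho}(x)T)(v)$, which is exactly the Leibniz identity for the pairing since $v$ is annihilated by $\h$. This equivariant pairing induces a cup product $\cup:C^{1}(\h;\Hom_{\F}(\a,\ide))\otimes C^{n-1}(\h;\a)\to C^{n}(\h;\ide)$, and a short inspection of the shuffle-sign formula in the first degree shows that $\varphi\cup\lambda$ coincides, term by term, with $e_{\varphi}(\lambda)$ as defined in \eqref{operador e}. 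The assertion then reads $\d_{\ide}(\varphi\cup\lambda)=-\,\varphi\cup(\d_{\a}\lambda)$, which is the Leibniz rule $\d(\varphi\cup\lambda)=(\d\varphi)\cup\lambda+(-1)^{\deg\varphi}\varphi\cup\d\lambda$ together with $\deg\varphi=1$ and $\d\varphi=0$, the latter being precisely the hypothesis that $\varphi$ is a $1$-cocycle for $\bar{\rho}$.

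To keep everything within the elementary language used so far, I would instead verify the identity by directly expanding both composites on a cochain $\lambda\in C^{n-1}(\h;\a)$ evaluated at $(x_{1},\ldots,x_{n+1})$, and sorting the resulting terms into three types. Since $\a$ is a trivial module, $\d_{\a}\lambda$ carries only bracket terms, so $e_{\varphi}(\d_{\a}\lambda)$ produces only terms of the form $\varphi(x_{k})(\lambda(\ldots,[x_{i},x_{j}]_{\h},\ldots))$ with $k\notin\{i,j\}$ (call these Type A). On the other side, the Chevalley--Eilenberg formula splits $\d_{\ide}(e_{\varphi}\lambda)$ into a $\rho$-part $\sum_{k}(-1)^{k+1}\rho(x_{k})(\,(e_{\varphi}\lambda)(\ldots,\widehat{x_{k}},\ldots)\,)$, which yields terms $\rho(x_{k})(\varphi(x_{l})(\lambda(\ldots)))$ (Type B), and a bracket-part $\sum_{i<j}(-1)^{i+j}(e_{\varphi}\lambda)([x_{i},x_{j}]_{\h},\ldots)$. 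Expanding the latter, the inner $\varphi$ either lands on the slot carrying $[x_{i},x_{j}]_{\h}$, producing $\varphi([x_{i},x_{j}]_{\h})(\lambda(\ldots))$ (Type C), or on a plain argument $x_{m}$, producing further Type A terms.

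The proof is then completed by two cancellations. First, the Type A terms coming from $e_{\varphi}\circ\d_{\a}$ must cancel the Type A terms coming from the bracket-part of $\d_{\ide}\circ e_{\varphi}$; this is pure sign bookkeeping, obtained by tracking how the positions, and hence the signs $(-1)^{i+j}$ and $(-1)^{k+1}$, shift as indices are deleted and the bracket $[x_{i},x_{j}]_{\h}$ is reinserted into the first slot. Second, the Type B terms cancel the Type C terms: here one invokes the cocycle identity $\varphi([x_{i},x_{j}]_{\h})=\rho(x_{i})\circ\varphi(x_{j})-\rho(x_{j})\circ\varphi(x_{i})$, the first line of \eqref{condiciones para la desc de R} (equivalently $\d\varphi=0$), which rewrites each Type C term as a difference of two Type B terms matching the $\rho$-part with opposite sign.

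The main obstacle is the sign accounting in the first cancellation. Deleting $x_{k}$ before forming $\d_{\a}\lambda$ reindexes the pair $(i,j)$ differently than deleting the pair $(i,j)$ first and then locating $x_{k}$, so the signs do not visibly agree; one must compare $(-1)^{k+1}$ times the internal $\d_{\a}$-sign against $(-1)^{i+j}$ times the internal $e_{\varphi}$-sign for each configuration and confirm they are opposite in all three orderings of $k$ relative to $i$ and $j$. This is exactly the bookkeeping that the cup-product formulation packages into the single Koszul sign $(-1)^{\deg\varphi}$, which is why I would present the conceptual argument first and treat the index computation as a verification.
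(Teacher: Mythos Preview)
Your proposal is correct. Both the cup-product argument and the direct term-by-term expansion lead to valid proofs; the cup-product version is the cleaner of the two and your identification of $e_{\varphi}(\lambda)$ with $\varphi\cup\lambda$ via $(1,n)$-shuffles is accurate, so the Leibniz rule together with $\d\varphi=0$ gives exactly $\d_{\ide}\circ e_{\varphi}=-\,e_{\varphi}\circ\d_{\a}$.

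The paper, however, proves the statement by a different and arguably shorter trick. It assembles $\j=\a\oplus\ide$ and defines the single representation $R(x)=\left(\begin{smallmatrix}0&0\\ \varphi(x)&\rho(x)\end{smallmatrix}\right)$ on $\j$; the hypothesis that $\varphi$ is a $1$-cocycle for $\bar{\rho}$ is precisely what makes $R$ a genuine representation (cf.\ \eqref{condiciones para la desc de R}). The Chevalley--Eilenberg differential $\d$ on $C(\h;\j)\simeq C(\h;\a)\oplus C(\h;\ide)$ then has the block form $\D_{\varphi}=\left(\begin{smallmatrix}\d_{\a}&0\\ e_{\varphi}&\d_{\ide}\end{smallmatrix}\right)$, and the single identity $\d\circ\d=0$ unpacks into $\d_{\a}^{2}=0$, $\d_{\ide}^{2}=0$, and the off-diagonal relation $e_{\varphi}\circ\d_{\a}+\d_{\ide}\circ e_{\varphi}=0$, which is the claim. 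This avoids all explicit sign bookkeeping and does not require invoking the cup product, at the cost of the small observation that $R$ is a representation. Your approach has the advantage of exposing the underlying multiplicative structure and generalizing immediately to higher-degree cocycles $\varphi$; the paper's approach is tailored to this situation but is essentially a one-line proof once $R$ is in hand.
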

\begin{proof}
Consider $\j=\a \oplus \ide$, and the representation $R:\h \to \gl(\j)$, given by,
$R(x)(v+\theta)=\varphi(x)(v)+\rho(x)(\theta)$, $\forall\, v \in \a$ and $\forall\, \theta \in \ide$
(see \eqref{desc de la rep R}).
The differential map $\d$ 
in the cochain complex $C(\h;\j)$ defined by
the representation $R:\h \to \gl(\j)$, satisfies
$$
\d=\iota_{\a} \circ\,\d_{\a}+\iota_{\ide}\circ\, (e_{\varphi} \oplus \d_{\ide})
\leftrightarrow (\d_{\a},e_{\varphi}\oplus\d_{\ide}),
$$
where $\iota_{\a}:\a \to \j$ and $\iota_{\ide}:\ide \to \j$ are the inclusion maps.
The decomposition $\j=\a \oplus \ide$ makes
$C(\h;\j)$ to decompose as $C(\h;\a) \oplus C(\h;\ide)$
and therefore, the cochain complex $(C(\h;\j),\d)$ becomes isomorphic
to $\left(\,C(\h;\a) \oplus C(\h;\ide),(\d_{\a},e_{\varphi}\oplus\d_{\ide})\right)$.
Then,
$$
(\,e_{\varphi} \,\circ \d_{\a}\,)\,\vert_{\,C^{n-1}(\h;\a)}
=-\,(\,\d_{\ide}\, \circ\,\, e_{\varphi}\,)\,\vert_{\,C^{n-1}(\h;\a)},
\quad\forall\,n\in\N,
$$
follows from the fact that $(\,\d\,\circ\,\d)\,\vert_{C^{n}(\h;\j)}=0$.
\end{proof}

\smallskip
\begin{Remark}{\rm
Write $\Lambda=\lambda\oplus\mu$, for any $\Lambda \in C(\h;\j)$, 
with $\lambda \in C(\h;\a)$ and $\mu \in C(\h;\ide)$. 
In view of {\bf Prop. \ref{anticonmutativo}} 
and the specific form of the representation  $R$ 
given in \eqref{desc de la rep R} in terms of $\varphi$ and $\rho$,
we shall write,
$$
\aligned
(\d\Lambda)(x_1,\ldots,x_{n+1})=&\sum_{i=1}^{n+1}(-1)^{i+1}R(x_{i})
(\Lambda(x_1,\ldots,\widehat{x_{i}},\ldots,x_{n+1}))\\
\,& +\sum_{i<j}(-1)^{i+j}
\Lambda([x_{i},x_{j}]_{\h},x_1,\ldots,\widehat{x_{i}},\ldots,\widehat{x_{j}},\ldots,x_{n+1})\\
=&
(\d_{\a}\lambda)(x_1,\ldots,x_{n+1})\\
\,&\oplus\,\left(\,e_{\varphi}(\lambda)(x_1,\ldots,x_{n+1})
+(\d_{\ide}\mu)(x_1,\ldots,x_{n+1})\,\right)\\
\leftrightarrow & \ 
\begin{pmatrix} (\d_{\a}\lambda)(x_1,\ldots,x_{n+1}) \\
e_{\varphi}(\lambda)(x_1,\ldots,x_{n+1})+(\d_{\ide}\mu)(x_1,\ldots,x_{n+1})
\end{pmatrix}
\\
=&
\begin{pmatrix}
\d_{\a} & 0 \\ e_{\varphi} & \d_{\ide}
\end{pmatrix}
\begin{pmatrix}
\lambda \\
\mu
\end{pmatrix}
(x_1,\ldots,x_{n+1})\\
=& 
\left(
\D_{\varphi}
\begin{pmatrix}
\lambda \\
\mu
\end{pmatrix}
\right)(x_1,\ldots,x_{n+1})\,.
\endaligned
$$
In other words, the differential map $\d$ of $C(\h;\j)$ gets identified with 
the operator $\D_{\varphi}=\left(\begin{smallmatrix}\d_{\a}&0\\ 
e_{\varphi}&\d_{\ide}\end{smallmatrix}\right)$
that acts on $C(\h;\a)\oplus C(\h;\ide)$. 
From now on we shall
omit the explicit reference to $\a$ in $\d_{\a}$ 
and to $\ide$ in $\d_{\ide}$ and simply write $\d$,
as their meaning is clear from the context of the operator $\D_{\varphi}$.}
\end{Remark}

\smallskip
We may now summarize what we have done so far in this section
in the following statement:

\smallskip
\begin{Cor}\label{corolario ext abelianas}{\sl
Define a skew-symmetric bilinear map
$[\,\cdot\,,\,\cdot\,]:\g\times\g\to\g$ 
on the underlying vector space $\g=\h\oplus\a\oplus\ide$, 
by means of \eqref{corchete en terminos de los ideales 2},
where $\lambda:\h\times\h\to \a$ and $\mu:\h\times\h\to \ide$ 
are $2$-cochains in
the complexes $C(\h;\a)$ and $C(\h;\ide)$ associated to the
trivial representation of $\h$ in $\a$ 
and to the representation $\rho$ of $\h$ in $\ide$, respectively.
Let $\varphi:\h\to\Hom_{\F}(\a,\ide)$ be a $1$-cochain in the complex $C(\h;\Hom_{\F}(\a,\ide))$
associated to the representation $\bar{\rho}:\h\to\gl(\Hom_{\F}(\a,\ide))$
defined by $\bar{\rho}(x)(\tau)=\rho(x)\circ\tau$.
Then, $[\,\cdot\,,\,\cdot\,]$ is a Lie algebra bracket on $\g$
if and only if
\begin{equation}\label{diferenciales}
\qquad\d\lambda = 0,\qquad\  \d\mu+e_{\varphi}(\lambda) = 0,\qquad\  \d\varphi = 0,
\end{equation}
where $e_\varphi:C(\h;\a)\to C(\h;\ide)$ is the degree-one map
of cochain complexes defined in \eqref{operador e}. Moreover,
\eqref{diferenciales} can be rewritten in terms of the differential map $\D_{\varphi}
=\left(\begin{smallmatrix}\d&0\\ e_{\varphi}&\d\end{smallmatrix}\right)$
acting on $C(\h;\a)\oplus C(\h;\ide)\to C(\h;\a)\oplus C(\h;\ide)$, so that,
$[\,\cdot\,,\,\cdot\,]$ is a Lie algebra bracket on $\g$ if and only if
\begin{equation}\label{diferencial D}
\D_{\varphi}
\begin{pmatrix}\lambda \\ \mu\end{pmatrix}=\begin{pmatrix} 0 \\ 0\end{pmatrix}
\qquad\text{and}\qquad\  \d\varphi = 0.
\end{equation}
}
\end{Cor}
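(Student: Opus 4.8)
The plan is to note that the bracket defined by \eqref{corchete en terminos de los ideales 2} is skew-symmetric by construction, so the only axiom left to verify is the Jacobi identity, and to organize that verification according to how many of the three arguments lie in the Abelian ideal $\j=\a\oplus\ide$. Because $\j$ is Abelian and $[\g,\j]\subset\ide\subset\j$ by (a), (b) and (c) of \eqref{condiciones para los ideales}, I would first dispose of the cases in which at least two arguments lie in $\j$: in the cyclic sum $\sum_{\circlearrowleft}[[a,b],c]$ every summand then contains an inner bracket of two elements of $\j$, which vanishes, so the Jacobi identity holds automatically and carries no information. Only the cases with exactly one argument in $\j$, and with no argument in $\j$, remain.

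For exactly one argument $s\in\j$ and two arguments $x,y\in\h$, I would expand using \eqref{corchete en terminos de los ideales 2}, discard the brackets between two elements of $\j$ (which vanish), and collect the rest; the cyclic sum then collapses to $\bigl(R([x,y]_{\h})-R(x)\circ R(y)+R(y)\circ R(x)\bigr)(s)$. Requiring this to vanish for all $x,y\in\h$ and all $s\in\j$ is exactly \eqref{representacion R}, the assertion that $R$ is a representation. Writing $R$ in the block form \eqref{desc de la rep R} and using that $\rho$ is by hypothesis already a representation, this condition splits via \eqref{condiciones para la desc de R} into the identity for $\rho$, which holds automatically, and the identity $\varphi([x,y]_{\h})=\rho(x)\circ\varphi(y)-\rho(y)\circ\varphi(x)$, which is precisely the $1$-cocycle condition $\d\varphi=0$ for $\varphi$ with coefficients in $\bar{\rho}$.

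For three arguments $x,y,z\in\h$, I would reuse the computation already performed in the text leading to \eqref{desc de corchete en terminos de los ideales}: expanding $[[x,y],z]$ along $\h\oplus\a\oplus\ide$ and taking the cyclic sum yields three components, one in each summand. The $\h$-component is the Jacobi identity for $(\h,[\cdot,\cdot]_{\h})$ and holds automatically; the $\a$-component reads $\sum_{\circlearrowleft}\lambda([x,y]_{\h},z)=0$, i.e. $\d\lambda=0$; and the $\ide$-component reads $\sum_{\circlearrowleft}\{\mu([x,y]_{\h},z)-\varphi(z)(\lambda(x,y))-\rho(z)(\mu(x,y))\}=0$, which by the definition \eqref{operador e} of $e_{\varphi}$ is $\d\mu+e_{\varphi}(\lambda)=0$. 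Since these components lie in complementary summands, the full sum vanishes if and only if each does; together with the previous paragraph this gives both the necessity and the sufficiency of the three relations in \eqref{diferenciales}. The equivalent form \eqref{diferencial D} is then a transcription: by the Remark following {\bf Prop. \ref{anticonmutativo}}, the pair $\d\lambda=0$, $\d\mu+e_{\varphi}(\lambda)=0$ is exactly $\D_{\varphi}\left(\begin{smallmatrix}\lambda\\\mu\end{smallmatrix}\right)=0$.

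I expect the main obstacle to be organizational rather than computational: making sure the case split is exhaustive and, in the one-argument-in-$\j$ case, that the outcome does not depend on whether $s$ lies in $\a$ or in $\ide$. This last point is harmless, since $R(x)s\in\ide$ for either choice and the reduction to \eqref{representacion R} proceeds uniformly; the genuine content is just the recognition that, modulo the standing hypothesis that $\rho$ is a representation, the representation property of $R$ is equivalent to $\d\varphi=0$, so that the Jacobi identity disassembles cleanly into the three cohomological equations of \eqref{diferenciales}.
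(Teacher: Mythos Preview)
Your proposal is correct and follows essentially the same approach as the paper: the corollary is stated there as a summary of the preceding discussion, which likewise reduces the Jacobi identity for general elements to the representation property of $R$ (equivalently $\d\varphi=0$, since $\rho$ is already a representation) and then, for three arguments in $\h$, derives \eqref{desc de corchete en terminos de los ideales} to obtain $\d\lambda=0$ and $\d\mu+e_{\varphi}(\lambda)=0$. Your organization by the number of arguments lying in $\j$ is simply a cleaner packaging of the same computation.
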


\smallskip
{\bf Note.} 
Consider the differential maps
$\d:C(\h;\j)\to C(\h;\j)$  and  $\d^\prime:C(\h;\j)\to C(\h;\j)$
associated to the representations $R$ and $R^\prime=(g,\sigma).R$,
respectively, with $g\in\Aut(\h)$ and $\sigma\in\GL(\j)$.
We shall use the finer decomposition $\j=\a\oplus\ide$
and will assume that the ideals $\ide$ and $\j$ are Abelian,
are canonically defined, and are such that $\ide\subset\j$ and 
$[\g,\j]\subset \ide$, so that, any isomorphism $\h\oplus\a\oplus\ide
\to \h\oplus\a\oplus\ide$ must map $\j$ into $\j$ and $\ide$ into itself. Thus, 
in {\bf Prop. \ref{Prop de isomorfismo de extensiones abelianas}}, we must have,
\begin{equation}\label{sigma para los ideales canonicos}
\sigma = 
\begin{pmatrix}
h & 0 \\ 
T & k
\end{pmatrix},
\quad\text{with,}\quad
h\in\GL(\a),\  k\in\GL(\ide),\ T\in\Hom_{\F}(\a,\ide).
\end{equation}
and we shall also write, 
$\Theta(x) = \tau(x)\oplus\nu(x)\leftrightarrow \left(\begin{smallmatrix}\tau(x) \\ 
\nu(x)\end{smallmatrix}\right)$
for the map $\Theta:\h \to \a \oplus \ide$, in
{\bf Prop. \ref{Prop de isomorfismo de extensiones abelianas}}.
Taking into account the finer structure brought by the decomposition
$\j=\a\oplus\ide$, the specific form of $\sigma$ in \eqref{sigma para los ideales canonicos}
and the dependence of $\Lambda$ and $R$ on the data $(\lambda,\mu)$ and
$(\varphi,\rho)$, respectively, 
we shall write $\h(\lambda,\mu,\varphi,\rho)$ instead of $\h(\Lambda,R)$
and state the following {\bf Corollary} to 
{\bf Prop. \ref{Prop de isomorfismo de extensiones abelianas}}
and {\bf Prop. \ref{Prop de equivariancia}}:

\begin{Cor}\label{equivariancia de ecuaciones}{\sl
Suppose $\g=\h(\lambda,\mu,\varphi,\rho)$ and
$\g^\prime=\h(\lambda^\prime,\mu^\prime,\varphi^\prime,\rho^\prime)$
are two Lie algebras whose Lie brackets $[\,\cdot\,,\,\cdot\,]$ and $[\,\cdot\,,\,\cdot\,]^\prime$
are defined on the underlying vector space $\h\oplus\a\oplus\ide$, 
in terms of the data satisfying the conditions of 
{\bf Prop. \ref{Prop de isomorfismo de extensiones abelianas}} 
and {\bf Prop. \ref{Prop de equivariancia}}. 
Furthermore, assume that
$\ide$ and $\j=\a\oplus\ide$ are two canonically defined 
Abelian ideals satisfying
$[\g,\j]\subset\ide$ and $[\g^\prime,\j]^\prime\subset\ide$.
Then, $\g$ and $\g^\prime$ are isomorphic if and only if there exist
$g\in\Aut(\h)$, $h\in\GL(\a)$, $k\in\GL(\ide)$ and 
linear maps $\tau:\h\to\a$, $\nu:\h\to\ide$ and $T:\a\to\ide$ such that
\begin{equation}\label{9}
\begin{split}
& \Phi(\gamma)\begin{pmatrix} \lambda \\ \mu \end{pmatrix}
=
\begin{pmatrix} \lambda^\prime \\ \mu^\prime \end{pmatrix}
+ \begin{pmatrix} \d & 0 \\ e_{\varphi^\prime} & \d^\prime \end{pmatrix}
\begin{pmatrix} \tau\circ g^{-1} \\ \nu\circ g^{-1} \end{pmatrix}
=:
\begin{pmatrix} \lambda^\prime \\ \mu^\prime \end{pmatrix}
+
\D^\prime_{\varphi^\prime}
\begin{pmatrix} \tau\circ g^{-1} \\ \nu\circ g^{-1} \end{pmatrix},\\
&\Phi(\gamma)\,\varphi = \varphi^\prime + \d(T\circ h^{-1}) \qquad\text{\it and}\qquad
\Phi(\gamma)\,\rho  = \rho^\prime,
\end{split}
\end{equation}
where $\gamma=(g,\sigma)$ and $\sigma
=\left(\begin{smallmatrix} h& 0 \\ T & k \end{smallmatrix}\right)$, 
just as in \eqref{sigma para los ideales canonicos}, and
$$
\aligned
\left(\Phi(\gamma)\begin{pmatrix} \lambda \\ \mu \end{pmatrix}\right)(x,y) & =
{\begin{pmatrix}
h & 0 \\ 
T & k
\end{pmatrix}
\begin{pmatrix}
\lambda(g^{-1}(x),g^{-1}(y))
\\
\mu(g^{-1}(x),g^{-1}(y))
\end{pmatrix},}
\\
\left(\,\Phi(\gamma)\,\varphi\,\right)(x) & =
k\circ\varphi\left(g^{-1}(x)\right)\circ h^{-1},
\\
\left(\,\Phi(\gamma)\,\rho\,\right)(x) & =
k\circ\rho\left(g^{-1}(x)\right)\circ k^{-1}.
\endaligned
$$
Moreover
$
\D^\prime_{\varphi^\prime}\circ\,\Phi(\gamma) = \Phi(\gamma)\,\circ\,\D_{\varphi}$,
if and only if the relations \eqref{9} hold true.}
\end{Cor}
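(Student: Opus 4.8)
The plan is to specialize Proposition \ref{Prop de isomorfismo de extensiones abelianas} and Proposition \ref{Prop de equivariancia} to the refined decomposition $\j=\a\oplus\ide$ dictated by the two canonical ideals. By the Note preceding the statement, any isomorphism must carry $\j$ onto $\j$ and $\ide$ onto $\ide$, so $\sigma$ is forced to be the lower-triangular block map \eqref{sigma para los ideales canonicos} and $\Theta=\tau\oplus\nu$. First I would start from the two isomorphism conditions \eqref{condiciones de isomorfismo 2}, namely $(g,\sigma).\Lambda=\Lambda^\prime+\d^\prime(g.\Theta)$ and $(g,\sigma).R=R^\prime$, and read off each side after inserting $\Lambda=\lambda\oplus\mu$, the block form of $\sigma$, and the description \eqref{desc de la rep R} of $R$ in terms of $(\varphi,\rho)$.

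For the cocycle equation I would compute $(g,\sigma).\Lambda(x,y)=\sigma\bigl(\Lambda(g^{-1}x,g^{-1}y)\bigr)$ from the definition in \eqref{act del grupo}; with $\sigma$ as in \eqref{sigma para los ideales canonicos} this is exactly the map $\Phi(\gamma)\left(\begin{smallmatrix}\lambda\\\mu\end{smallmatrix}\right)$ appearing in the statement. The coboundary term $\d^\prime(g.\Theta)$ is then handled through the identification of the differential $\d^\prime$ on $C(\h;\j)^\prime$ with the matrix operator $\D^\prime_{\varphi^\prime}=\left(\begin{smallmatrix}\d&0\\e_{\varphi^\prime}&\d^\prime\end{smallmatrix}\right)$, established after Proposition \ref{anticonmutativo} and used in Corollary \ref{corolario ext abelianas}, now applied to the representation $R^\prime$. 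Since $g.\Theta=\Theta\circ g^{-1}=\left(\begin{smallmatrix}\tau\circ g^{-1}\\\nu\circ g^{-1}\end{smallmatrix}\right)$, this produces precisely the first line of \eqref{9}.

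The representation equation $(g,\sigma).R=R^\prime$ is where the genuine computation lies, and I expect this to be the main obstacle. Using $\sigma^{-1}=\left(\begin{smallmatrix}h^{-1}&0\\-k^{-1}Th^{-1}&k^{-1}\end{smallmatrix}\right)$ and carrying out the block conjugation $\sigma\circ R(g^{-1}x)\circ\sigma^{-1}$ with $R(g^{-1}x)=\left(\begin{smallmatrix}0&0\\\varphi(g^{-1}x)&\rho(g^{-1}x)\end{smallmatrix}\right)$, the $(2,2)$ block yields $\rho^\prime(x)=k\,\rho(g^{-1}x)\,k^{-1}=\bigl(\Phi(\gamma)\rho\bigr)(x)$, while the $(2,1)$ block yields $\varphi^\prime(x)=k\,\varphi(g^{-1}x)\,h^{-1}-k\,\rho(g^{-1}x)\,k^{-1}\,T\,h^{-1}$. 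The delicate point is recognizing the extra term: since $k\,\rho(g^{-1}x)\,k^{-1}=\rho^\prime(x)$, it equals $\rho^\prime(x)\circ(T\circ h^{-1})$, which is exactly the value at $x$ of the coboundary $\d(T\circ h^{-1})$ of the $0$-cochain $T\circ h^{-1}\in\Hom_{\F}(\a,\ide)$ in $C(\h;\Hom_{\F}(\a,\ide))$ for the representation $\bar\rho^\prime$. Rearranging then gives $\Phi(\gamma)\varphi=\varphi^\prime+\d(T\circ h^{-1})$, i.e.\ the second line of \eqref{9}. This mirrors, one level down, the way the component $\Theta$ of an isomorphism contributes a coboundary to the cocycle relation (cf.\ Remark \ref{ISO}); keeping the two coboundary phenomena apart is the only real subtlety.

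Finally, the equivariance identity $\D^\prime_{\varphi^\prime}\circ\Phi(\gamma)=\Phi(\gamma)\circ\D_{\varphi}$ is obtained by transporting Proposition \ref{Prop de equivariancia} through the isomorphism of cochain complexes $C(\h;\j)\simeq C(\h;\a)\oplus C(\h;\ide)$ under which $\d\leftrightarrow\D_{\varphi}$ and $\d^\prime\leftrightarrow\D^\prime_{\varphi^\prime}$. That proposition gives $\d^\prime\circ\Phi(\gamma)=\Phi(\gamma)\circ\d$ if and only if $R^\prime=\gamma.R$, and by the previous paragraph $R^\prime=\gamma.R$ is equivalent to the relations on $\rho$ and $\varphi$ in \eqref{9}; hence the stated equivalence follows, and combining it with the cocycle line of \eqref{9} completes the characterization of when $\g\simeq\g^\prime$.
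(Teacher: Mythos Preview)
Your proposal is correct and follows exactly the route the paper intends: the paper gives no separate proof but presents the result as a direct corollary of {\bf Prop.~\ref{Prop de isomorfismo de extensiones abelianas}} and {\bf Prop.~\ref{Prop de equivariancia}} once $\sigma$ is constrained to the block form \eqref{sigma para los ideales canonicos}, and your argument simply spells out that specialization. Your block computation of $\sigma\,R(g^{-1}x)\,\sigma^{-1}$ and the identification of the extra $(2,1)$ term with the coboundary $\d(T\circ h^{-1})$ in $C(\h;\Hom_{\F}(\a,\ide))$ are precisely the details the paper leaves implicit.
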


\smallskip
Fix the representation $R=(\varphi,\rho):\h \to \gl_{\F}(\j)$.
Then, {\bf Prop. \ref{anticonmutativo}} and {\bf Cor. \ref{corolario ext abelianas}}
give the short exact sequence of complexes,
\begin{equation}\label{sucesion de complejos}
\xymatrix{
0  \ar[r] & C(\h;\ide)  \ar[r] & C(\h;\a) \oplus C(\h;\ide)  \ar[r] & C(\h;\a)  \ar[r]  & 0
}
\end{equation}
where $e_{\varphi}:C(\h;\a) \to C(\h;\ide)$ is the connecting homomorphism. 
Denote the respective coboundaries, cocycles and cohomology group
of the complex $C(\h;\a) \oplus C(\h;\ide)$ in the classical way:
$$
B_R(\h;\a \oplus \ide),\quad Z_R(\h;\a \oplus \ide), \quad H_R(\h;\a \oplus \ide).
$$
It follows from \textbf{Cor. \ref{corolario ext abelianas}}
that a pair $(\lambda,\mu) \in C(\h;\a) \times C(\h;\ide)$ 
gives rise to a Lie algebra $\h(\lambda,\mu,\varphi,\rho)$
if and only if $(\lambda,\mu) \in Z_R(\h;\a \oplus \ide)$.
Using now {\bf Cor. \ref{equivariancia de ecuaciones}} 
we may deduce the following expected
criterion to determine whether two Lie algebras having
the same underlying representation $R=(\varphi,\rho)$ are isomorphic or not.

\smallskip
\begin{Cor}\label{criterio cohomologico}{\sl
Let $\g=\h(\lambda,\mu,\varphi,\rho)$ and 
$\g^{\prime}=\h(\lambda^{\prime},\mu^{\prime},\varphi,\rho)$ 
be two Lie algebras constructed
as in {\bf Cor. \ref{corolario ext abelianas}}
in terms of the same representation $R=(\varphi,\rho)$.
Then, $\g$ and $\g^{\prime}$ are isomorphic if and only if
$(\lambda,\mu)$ and $(\lambda^\prime,\mu^\prime)$
define the same cohomology class in $H_R^2(\h;\a \oplus \ide)$.
}
\end{Cor}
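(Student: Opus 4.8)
The plan is to read {\bf Cor. \ref{criterio cohomologico}} as the fixed-representation specialization of {\bf Cor. \ref{equivariancia de ecuaciones}}, in which the representation is not moved, i.e. $\varphi^\prime=\varphi$ and $\rho^\prime=\rho$, so that $\D_{\varphi^\prime}=\D_\varphi$ and both brackets live on the same space $\h\oplus\a\oplus\ide$ with the same $R=(\varphi,\rho)$. By {\bf Cor. \ref{corolario ext abelianas}} and the short exact sequence \eqref{sucesion de complejos}, a pair $(\lambda,\mu)$ defines a bracket exactly when $(\lambda,\mu)\in Z_R(\h;\a\oplus\ide)$, and $(\lambda,\mu)$, $(\lambda^\prime,\mu^\prime)$ share a class in $H^2_R(\h;\a\oplus\ide)$ exactly when their difference equals $\D_\varphi\!\left(\begin{smallmatrix}\tau\\ \nu\end{smallmatrix}\right)$ for some $\tau\in C^1(\h;\a)$, $\nu\in C^1(\h;\ide)$. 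So the task is to match ``same class'' with ``isomorphic'' through \eqref{9}.

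For the sufficiency I would assume $\left(\begin{smallmatrix}\lambda\\ \mu\end{smallmatrix}\right)=\left(\begin{smallmatrix}\lambda^\prime\\ \mu^\prime\end{smallmatrix}\right)+\D_\varphi\!\left(\begin{smallmatrix}\tau\\ \nu\end{smallmatrix}\right)$ and apply {\bf Cor. \ref{equivariancia de ecuaciones}} with the trivial data $g=\Id_\h$, $h=\Id_\a$, $k=\Id_\ide$, $T=0$, so that $\sigma=\Id_\j$ and $\Phi(\gamma)=\Id$. The two representation equations of \eqref{9} then collapse to $\varphi=\varphi^\prime$ and $\rho=\rho^\prime$, which hold by hypothesis, while the first equation of \eqref{9} becomes precisely the coboundary relation above. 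Hence the hypotheses of {\bf Cor. \ref{equivariancia de ecuaciones}} are met and $\g\simeq\g^\prime$; concretely the isomorphism is the coboundary factor $\Psi=\left(\begin{smallmatrix}\Id_\h&0\\ \Theta&\Id_\j\end{smallmatrix}\right)$ of \eqref{desc-isomorfismo-1}, with $\Theta=\tau\oplus\nu$.

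For the necessity I would run {\bf Cor. \ref{equivariancia de ecuaciones}} in reverse: an isomorphism $\g\simeq\g^\prime$ supplies $g\in\Aut(\h)$, $h\in\GL(\a)$, $k\in\GL(\ide)$ and $\tau,\nu,T$ obeying \eqref{9} with $\varphi^\prime=\varphi$, $\rho^\prime=\rho$. The last two equations of \eqref{9} become $\Phi(\gamma)\rho=\rho$ and $\Phi(\gamma)\varphi=\varphi+\d(T\circ h^{-1})$, so $\gamma=(g,\sigma)$ sits in the isotropy subgroup $G_R$ of {\bf Prop. \ref{Prop equivariancia-isotropia}}. By that proposition $\Phi(\gamma)$ commutes with $\d$, hence with $\D_\varphi$, so it carries $Z_R$ into $Z_R$ and $B_R$ into $B_R$ and descends to $H^2_R(\h;\a\oplus\ide)$. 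The first equation of \eqref{9} then reads, at the level of classes, $\big[\Phi(\gamma)(\lambda,\mu)\big]=\big[(\lambda^\prime,\mu^\prime)\big]$.

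The step I expect to be the genuine obstacle is to upgrade $\big[\Phi(\gamma)(\lambda,\mu)\big]=\big[(\lambda^\prime,\mu^\prime)\big]$ to $\big[(\lambda,\mu)\big]=\big[(\lambda^\prime,\mu^\prime)\big]$, i.e. to check that the induced automorphism $\Phi(\gamma)$ fixes the class at issue. Here I would lean on the factorization \eqref{desc-isomorfismo-1} of $\Psi$ into a representation factor $\left(\begin{smallmatrix}g&0\\0&\sigma\end{smallmatrix}\right)$ and a coboundary factor, together with the classical fact (recorded from \cite{Che} in the Introduction and in {\bf Rmrk. \ref{ISO}}) that, for a \emph{fixed} representation, equivalence of extensions amounts to equality of cohomology classes. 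Under the reading in which ``isomorphic'' means an isomorphism inducing the identity on $\h=\g/\j$ and on $\j$ --- the Chevalley--Eilenberg notion --- one has $g=\Id$ and $\sigma=\Id$, the representation factor is trivial, $\Phi(\gamma)=\Id$, and the necessity reduces at once to the sufficiency. The delicate point worth isolating is exactly that it is the hypothesis of an \emph{unchanged} representation, not merely an isomorphic one, that suppresses the potential $G_R$-action on $H^2_R$ and makes the bare equality of cohomology classes the correct invariant.
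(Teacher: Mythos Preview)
Your sufficiency argument is exactly the paper's: choose $g=\Id_{\h}$, $h=\Id_{\a}$, $k=\Id_{\ide}$, $T=0$, feed the coboundary relation into \eqref{9}, and read off the isomorphism $\Psi$ from {\bf Cor.~\ref{equivariancia de ecuaciones}}. The paper then disposes of the other direction in four words: ``the converse is clear.''

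You are right to flag the necessity direction. With the paper's own notion of isomorphism (any $\Psi$ preserving the canonical ideals, as in {\bf Cor.~\ref{equivariancia de ecuaciones}}), a general isomorphism only yields $\gamma\in G_R$ with $\Phi(\gamma)\!\left(\begin{smallmatrix}\lambda\\\mu\end{smallmatrix}\right)=\left(\begin{smallmatrix}\lambda'\\\mu'\end{smallmatrix}\right)+\D_{\varphi}\!\left(\begin{smallmatrix}\tau\circ g^{-1}\\\nu\circ g^{-1}\end{smallmatrix}\right)$; this is precisely the content of {\bf Prop.~\ref{Prop equivariancia-isotropia}} and says only that the two classes lie in the same $G_R$-orbit on $H^2_R$, not that they are equal. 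Your attempted upgrade---that $\Phi(\gamma)$ fixes the particular class---does not follow in general, and there is no mechanism in the paper that would force it.

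The resolution you propose is the correct one: read ``isomorphic'' as equivalence of extensions in the Chevalley--Eilenberg sense (isomorphism inducing the identity on $\h=\g/\j$ and on $\j$), so that $g=\Id_{\h}$, $\sigma=\Id_{\j}$ and $\Phi(\gamma)=\Id$ are forced. This is the classical statement from \cite{Che} the paper alludes to, and it is the only reading under which ``the converse is clear'' actually is. In practice the paper uses {\bf Cor.~\ref{criterio cohomologico}} exclusively in the cohomologous-implies-isomorphic direction (e.g.\ in the proof of {\bf Thm.~\ref{nilp cuad}} and throughout \S4), so the imprecision in the converse is harmless for the paper's applications---but you have correctly isolated where the statement, as literally written, overreaches.
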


\smallskip
\begin{proof}
Choose 
$g=\operatorname{Id}_{\h}$, $h=\operatorname{Id}_{\a}$, 
$T=0$ and $k=\operatorname{Id}_{\ide}$
for the isomorphism data given in \textbf{Cor. \ref{equivariancia de ecuaciones}}
for $\Psi$. Then, by \eqref{9} we have,
\begin{equation}\label{criterio cohomologico 1}
\begin{split}
\lambda^{\prime}&=\lambda-\d\tau,\\
\mu^{\prime}&=\mu-e_{\varphi^{\prime}}(\tau)-\d\nu,\\
\varphi^{\prime}&=\varphi,\\
\rho^{\prime}&=\rho.
\end{split}
\end{equation}
In particular, $R=(\varphi,\rho)=(\varphi^{\prime},\rho^{\prime})=R^{\prime}$.
In addition, using the differential operator $\d_{\varphi}$
(see \eqref{diferenciales} and \eqref{diferencial D}), we obtain
$(\lambda^{\prime},\mu^{\prime})=(\lambda,\mu)+\d_{\varphi}(-\tau,-\nu)$.
Thus, $(\lambda,\mu)\equiv (\lambda^{\prime},\mu^{\prime})\mod B_R^2(\h;\a \oplus \ide)$
as in \eqref{criterio cohomologico 1} above and the chosen 
$\Psi:\h(\lambda,\mu,\varphi,\rho) \to \h(\lambda^{\prime},\mu^{\prime},\varphi,\rho)$ 
gives the desired isomorphism.
The converse is clear.
\end{proof}

\smallskip
We may now close this section by showing how to produce
in a canonical way 
the abelian ideals $\ide$ and $\j$
of $\g$ that satisfy \eqref{condiciones para los ideales}.

\subsection{
Definition of the Canonical Ideals $\ide(\g)$ and $\j(\g)$ for a Lie Algebra $\g$
having at least one Abelian descending central ideal.
}

\medskip
\noindent
We shall adhere ourselves to the standard convention of writing,
$$
C(\g)=\{z\in\g\mid [z,x]=0,\ \text{for all}\ x\in\g\,\},
$$
for {\it the center\/} of a Lie algebra 
$\g$ with Lie bracket $[\,\cdot\,,\,\cdot\,]$. 
The {\it descending central series} 
$\g^0\supset\g^1\supset\cdots\supset \g^{\ell-1}\supset\g^\ell\supset\cdots$
of $\g$ is defined by,
$$
\g^0=\g, \qquad \g^\ell=[\g,\g^{\ell-1}], \qquad  \ell \in \N.
$$
The {\it derived series\/} of $\g$ is defined by,
$$
\g^{(0)}=\g,
 \qquad 
\g^{(\ell)}=[\g^{(\ell-1)},\g^{(\ell-1)}],\qquad  \ell \in \N,
$$
and the {\it derived central series\/} 
$C_1(\g)\subset C_2(\g)\subset\cdots \subset C_{\ell}(\g)\subset\cdots$ 
of $\g$ is defined by,
$$
C_1(\g)=C(\g), \qquad C_{\ell}(\g)=\pi_{\ell-1}^{-1}C(\g/C_{\ell-1}(\g)),
\qquad  \ell \in \N,\ \ell>1,
$$
where $\pi_{\ell-1}:\g \to \g/C_{\ell-1}(\g)$ is the corresponding canonical projection.
A Lie algebra $\g$ is \textbf{solvable} if there exists
an $\ell \in \N$ such that $\g^{(\ell)}=\{0\}$. 
Abelian Lie algebras are solvable with $\ell=1$.

\smallskip
The first result in this section {\it is\/} well known. 
Actually, we may refer the reader to \cite{BenayadiSuper}, \cite{Med} or \cite{Zhu}
for at least the fact that  for any $\ell\in\N$, $C(\g^{\ell-1})=C_{\ell}(\g)=(\g^{\ell})^{\perp}$ 
when  $\g$ admits an invariant metric. 
The main statement $C_{\ell}(\g) \subseteq C(\g^{\ell-1})$ holds true in general 
with no need of any invariant metric at all and it can be proved in a straightforward 
manner by induction on $\ell$. We may safely omit the details.

\begin{Prop}\label{proposicion auxiliar}{\sl
Let $\g$ be a Lie algebra with Lie bracket $[\cdot,\cdot]$ 
and let $C_{\ell}(\g)$ be the $\ell$-th ideal in its derived central series. Then,  
$$
C_{\ell}(\g) \subseteq C(\g^{\ell-1}):=\{x \in \g\,|\,[x,\g^{\ell-1}]=\{0\}\},\,\,\forall \ell \in \N.
$$
Furthermore, if $\g$ admits an invariant metric,
then $C_{\ell}(\g)=C(\g^{\ell-1})=(\g^{\ell})^{\perp}$, for all $\ell \in \N$.}
\end{Prop}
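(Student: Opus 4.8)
The plan is to prove the two assertions in turn: first the metric-free inclusion $C_{\ell}(\g)\subseteq C(\g^{\ell-1})$, and then, under the invariant-metric hypothesis, the two equalities. For the inclusion I would \emph{not} attempt a bare induction on $\ell$ applied to the statement as written, because expanding $[c,[a,b]]$ for $c\in C_{\ell}(\g)$ and a generator $[a,b]$ of $\g^{\ell-1}=[\g,\g^{\ell-2}]$ by the Jacobi identity produces, alongside a term that dies by the inductive hypothesis, a second term $[a,[c,b]]$ that only lands in $C_{\ell-2}(\g)$ and refuses to vanish. The remedy, which is the key step, is to strengthen the claim to a whole family: setting $C_0(\g):=\{0\}$, I would prove that
$$[C_{\ell}(\g),\g^{k}]\subseteq C_{\ell-k-1}(\g)\qquad\text{for all }\ell\ge 1\text{ and }0\le k\le\ell-1,$$
and then read off the Proposition by specializing to $k=\ell-1$, where the right-hand side is $C_0(\g)=\{0\}$, i.e.\ $[C_{\ell}(\g),\g^{\ell-1}]=\{0\}$. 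The family is proved by induction on $k$ (uniformly in $\ell$). The base case $k=0$ is exactly the defining property $[C_{\ell}(\g),\g]\subseteq C_{\ell-1}(\g)$, which follows from the characterization $x\in C_{\ell}(\g)\Leftrightarrow [x,y]\in C_{\ell-1}(\g)$ for all $y\in\g$. For the step, one writes a generator of $\g^{k}$ as $[a,b]$ with $a\in\g$, $b\in\g^{k-1}$, expands $[c,[a,b]]=[[c,a],b]+[a,[c,b]]$, and applies the level $k-1$ hypothesis to both summands (once at index $\ell-1$ for $[[c,a],b]$, once at index $\ell$ for $[a,[c,b]]$) together with the base property; both terms then land in $C_{\ell-k-1}(\g)$.

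For the metric statement I would first record the single clean identity, valid for every $\ell$, that
$$(\g^{\ell})^{\perp}=C(\g^{\ell-1}).$$
This is pure non-degeneracy plus invariance: since $\g^{\ell}=[\g,\g^{\ell-1}]$, a vector $x$ is orthogonal to $\g^{\ell}$ iff $B(x,[u,a])=0$ for all $a\in\g$ and $u\in\g^{\ell-1}$; invariance rewrites this as $B([x,u],a)=0$ for all $a$, and non-degeneracy turns it into $[x,u]=0$ for all $u\in\g^{\ell-1}$, i.e.\ $x\in C(\g^{\ell-1})$. Combined with the first part this already gives $C_{\ell}(\g)\subseteq C(\g^{\ell-1})=(\g^{\ell})^{\perp}$, so only the reverse inclusion $C(\g^{\ell-1})\subseteq C_{\ell}(\g)$ remains.

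The reverse inclusion I would obtain by induction on $\ell$, the base $\ell=1$ being the tautology $C(\g)=C(\g)$. Assuming $C(\g^{\ell-2})=C_{\ell-1}(\g)$, take $x\in C(\g^{\ell-1})$; to place $x$ in $C_{\ell}(\g)$ it suffices to show $[x,a]\in C_{\ell-1}(\g)=C(\g^{\ell-2})$ for every $a\in\g$, that is, $[[x,a],c]=0$ for every $c\in\g^{\ell-2}$. Testing against an arbitrary $d\in\g$ and using invariance twice gives $B([[x,a],c],d)=B([x,a],[c,d])=B(x,[a,[c,d]])$; here $[c,d]\in\g^{\ell-1}$ forces $[a,[c,d]]\in\g^{\ell}$, while $x\in C(\g^{\ell-1})=(\g^{\ell})^{\perp}$ by the displayed identity, so the pairing vanishes. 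Non-degeneracy then yields $[[x,a],c]=0$, closing the induction.

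Putting the three pieces together gives $C_{\ell}(\g)=C(\g^{\ell-1})=(\g^{\ell})^{\perp}$ for all $\ell$, as claimed. The main obstacle is confined to the first, metric-free part: the naive induction on $\ell$ breaks down, and one must realize that the correct object to propagate is the strengthened family $[C_{\ell}(\g),\g^{k}]\subseteq C_{\ell-k-1}(\g)$. Once that bookkeeping is in place, the metric half reduces to two short manipulations with invariance and non-degeneracy.
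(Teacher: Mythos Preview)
Your proof is correct. The paper, however, does not actually supply a proof of this proposition: it declares the result well known, cites \cite{BenayadiSuper}, \cite{Med}, \cite{Zhu} for the metric equalities, asserts that the inclusion $C_{\ell}(\g)\subseteq C(\g^{\ell-1})$ ``can be proved in a straightforward manner by induction on $\ell$'', and then explicitly omits the details. So there is no argument in the paper to compare against.

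One remark worth making: your observation that the bare induction on $\ell$ stalls on the term $[a,[c,b]]$, and that the clean fix is to strengthen to the family $[C_{\ell}(\g),\g^{k}]\subseteq C_{\ell-k-1}(\g)$, is exactly the kind of detail the paper's phrase ``straightforward induction'' hides. Your strengthened statement is in fact used implicitly later in the paper (in the proof of {\bf Lemma \ref{lema auxiliar 2}.(iii)}, where one needs $[C_{k+1}(\g),\g^{\ell}]=\{0\}$ for $k\le\ell$), so making it explicit is a genuine improvement over what the paper records.
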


\smallskip
The following {\bf Lemma} defines in a canonical way
the two ideals, $\ide(\g)$ and $\j(\g)$, whose properties form the basis of this work.

\smallskip
\begin{Lemma}\label{lema auxiliar 2}{\sl
Let $\g$ be an Lie algebra over a field of characteristic zero
and let $[\,\cdot\,,\,\cdot\,]$ be its Lie bracket. 
Define,
$$
\ide(\g)=\sum_{k \in \N}C_k(\g) \cap \g^k \ \quad\text{and}\quad\ \j(\g)=\bigcap_{k \in \N}(C_k(\g)+\g^k).
$$
Then,
\begin{itemize}

\item[(i)] $\ide(\g) \subset \j(\g)$.

\item[(ii)] 
There exists  an $m\ge 1$, such that,
$$
\j(\g)=C(\g)+\sum_{k=1}^{m-1}C_{k+1}(\g) \cap \g^k+\g^m.
$$

\item[(iii)] 
$\j(\g)$ is Abelian if and only if there is an $\ell\ge 1$, such that $\g^{\ell}$ is Abelian.

\item[(iv)]
If $\g$ admits an invariant metric and 
$\j(\g)$ is Abelian, then $[\g,\j(\g)] \subset \ide(\g)$ and $\ide(\g)^{\perp}=\j(\g)$.

\item[(v)] 
If $\g$ is a solvable
non-Abelian quadratic Lie algebra having one Abelian 
descending central ideal, then $\ide(\g) \neq \{0\} \neq \j(\g)$.

\end{itemize}
}
\end{Lemma}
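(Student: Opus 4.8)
The plan is to treat the assertions in the stated order, since the closed form (ii) for $\j(\g)$ powers (iii) while (iv) feeds (v). Throughout I would use only the monotonicity of the two series, the defining relation $[C_\ell(\g),\g]\subseteq C_{\ell-1}(\g)$ (with the convention $C_0(\g):=\{0\}$), and, once a metric is present, the identities $C_\ell(\g)=(\g^\ell)^\perp$ from \textbf{Prop. \ref{proposicion auxiliar}}. Assertion (i) is immediate: for any indices $k,j$ one has $C_k(\g)\cap\g^k\subseteq C_j(\g)+\g^j$, since $\g^k\subseteq\g^j$ when $j\le k$ and $C_k(\g)\subseteq C_j(\g)$ when $j>k$; intersecting over $j$ and summing over $k$ gives $\ide(\g)\subseteq\j(\g)$.

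For (ii) I would use finite-dimensionality to fix an $m$ with $\g^k=\g^m$ and $C_k(\g)=C_m(\g)$ for all $k\ge m$; since the terms $C_k(\g)+\g^k$ are then constant for $k\ge m$ (equal to the $k=m$ term), the infinite intersection reduces to $\bigcap_{k=1}^m(C_k(\g)+\g^k)$. I would then prove by induction on $m$ the identity $\bigcap_{k=1}^m(C_k(\g)+\g^k)=C_1(\g)+\sum_{k=1}^{m-1}(C_{k+1}(\g)\cap\g^k)+\g^m$. The inductive step is the one genuinely computational point: given $z$ in the left-hand side written both as $p+g$ (with $p$ in the previous partial sum and $g\in\g^m$) and as $c+h$ (with $c\in C_{m+1}(\g)$, $h\in\g^{m+1}$), the element $g-h=c-p$ lies in $C_{m+1}(\g)\cap\g^m$, which displays $z$ in the required finite sum; the reverse inclusion is immediate from monotonicity. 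This is the modular law made explicit and is the backbone of the lemma.

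For (iii) I would first record, by induction on $i$ from $[C_\ell(\g),\g]\subseteq C_{\ell-1}(\g)$ and the Jacobi identity, the estimate $[C_j(\g),\g^i]\subseteq C_{j-i-1}(\g)$. If $\j(\g)$ is Abelian, then $\g^m\subseteq\j(\g)$ by (ii) makes $\g^m$ Abelian. Conversely, if some $\g^\ell$ is Abelian then so is $\g^m$ (either $\g^m=\g^\ell$ or $\g^m\subseteq\g^\ell$), and I would check that all summands in the formula (ii) bracket-commute: $C_1(\g)$ is central, $[\g^m,\g^m]=\{0\}$, and for the summands $C_{k+1}(\g)\cap\g^k$ with $k\le m-1$ the estimate sends every bracket into some $C_i(\g)$ with $i\le 0$, hence into $\{0\}$. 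Thus $\j(\g)$ is Abelian.

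With a metric, \textbf{Prop. \ref{proposicion auxiliar}} gives $C_k(\g)=(\g^k)^\perp$, so $(C_k(\g)\cap\g^k)^\perp=C_k(\g)+\g^k$ and hence $\ide(\g)^\perp=\bigcap_k(C_k(\g)+\g^k)=\j(\g)$, i.e.\ $\ide(\g)=\j(\g)^\perp$; then $[\g,\j(\g)]\subseteq\ide(\g)$ because $B([x,u],w)=B(x,[u,w])=0$ for $x\in\g$ and $u,w\in\j(\g)$ with $\j(\g)$ Abelian, giving (iv). For (v) I would assume $\ide(\g)=\{0\}$, i.e.\ $(\g^k)^\perp\cap\g^k=\{0\}$ for all $k$, so $B$ is nondegenerate on every $\g^k$; with the stabilized $\g^m$, either $\g^m=\{0\}$ and $\g$ is nilpotent, in which case the last nonzero term $\g^s$ is central and satisfies $\g^s\subseteq C(\g)=(\g^1)^\perp\subseteq(\g^s)^\perp$, contradicting nondegeneracy, or $\g^m\ne\{0\}$ is a nonzero Abelian ideal with $[\g,\g^m]=\g^m$, whose nondegeneracy splits $\g=\g^m\oplus(\g^m)^\perp$ into ideals and forces $[\g,\g^m]=\{0\}$, again a contradiction; hence $\ide(\g)\ne\{0\}$ and $\{0\}\ne\ide(\g)\subseteq\j(\g)$. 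I expect the decomposition step in (ii) and this nondegeneracy dichotomy in (v)—the only place where the quadratic hypothesis is used essentially—to be the two main hurdles.
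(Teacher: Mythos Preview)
Your proof is correct in every part. Parts (i)--(iii) are essentially the paper's argument: the paper uses the modular law (``$+\cap$ distribution property'') iteratively where you phrase it as an induction, and the paper cites the inclusion $C_\ell(\g)\subseteq C(\g^{\ell-1})$ from \textbf{Prop.~\ref{proposicion auxiliar}} where you derive and use the sharper estimate $[C_j(\g),\g^i]\subseteq C_{j-i-1}(\g)$; the content is the same.

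Where you genuinely diverge is in (iv) and (v). For $[\g,\j(\g)]\subseteq\ide(\g)$ the paper computes term by term with the formula from (ii), sending $[\g,\,C_{k+1}(\g)\cap\g^k]$ into $C_k(\g)\cap\g^{k+1}$; your one-line invariance argument ($B([x,u],w)=B(x,[u,w])=0$ for $u,w\in\j(\g)$, hence $[\g,\j(\g)]\subseteq\j(\g)^\perp=\ide(\g)$) is shorter and avoids the explicit decomposition. For (v) the paper simply shows $C(\g)\neq\{0\}$ (else $\g=\g^1=\cdots=\g^\ell$ would be Abelian) and infers $\ide(\g)\neq\{0\}\neq\j(\g)$, implicitly leaning on (iii) and (iv): $\j(\g)$ is Abelian, so $\ide(\g)=\{0\}$ would force $\j(\g)=\ide(\g)^\perp=\g$ to be Abelian. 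Your nondegeneracy dichotomy (nilpotent vs.\ $\g^m=[\g,\g^m]\neq\{0\}$ splitting off as an ideal) is longer but self-contained, needing only $C_k(\g)=(\g^k)^\perp$ rather than the already-proved (iii) and (iv); either route works.
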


\smallskip
\begin{proof}
{\bf (i)} Let $j,k,\ell \in \N$ be such that $j \leq k \leq \ell$.
Then $C_k(\g) \cap \g^k\subset \g^k \subset \g^j \subset \g^j +C_{j}(\g)$. 
Similarly, $C_k(\g) \cap \g^k \subset C_{k}(\g) \subset C_{\ell}(\g) 
\subset C_{\ell}(\g)+\g^{\ell}$. 
Thus, $C_k(\g) \cap \g^k \subset C_{\ell}(\g)+\g^{\ell}$, for all $k, \ell \in \N$. 
Whence, $C_k(\g) \cap \g^k \subset \displaystyle{\bigcap_{\ell \in \N}(C_{\ell}(\g)+\g^{\ell})}$,
and therefore, $\displaystyle{\sum_{k \in \N}(C_k(\g) \cap \g^k) 
\subset \bigcap_{\ell \in \N}(C_{\ell}(\g)+\g^{\ell})}$.
\smallskip

{\bf (ii)} By definition, 
$\j(\g)=(C_1(\g)+\g^1)\bigcap\biggl(\displaystyle{\bigcap_{k \geq 2}(C_k(\g)+\g^k)}\biggr)$.
Observe that 
$C_1(\g) \subset C_k(\g)+\g^k$, for all $k \geq 2$. 
Thus, $C_1(\g) \subset \displaystyle{\bigcap_{k \geq 2}(C_k(\g)+\g^k)}$. 
It is a well known fact that for any triple of vector subspaces $U$, $V$ and $W$ of $\g$,
$U\subset V\Rightarrow (U+W)\cap V = U+W\cap V$. We shall refer to this result
as {\it the $+$ $\cap$ distribution property\/.}
In particular, 
$$
\aligned
\j(\g)&=C_1(\g)+\g^1 \bigcap \biggl(\bigcap_{k \geq 2}(C_k(\g)+\g^k)\biggr)\\
\,&=C_1(\g)+\g^1 \bigcap (C_2(\g)+\g^2) \bigcap\biggl(\bigcap_{k \geq 3}(C_k(\g)+\g^k\biggr).
\endaligned
$$
Now, $C_2(\g) \subset C_k(\g)+\g^k$ ($\forall\,k \geq 3$) $\Rightarrow$ 
$C_2(\g) \subset \displaystyle{\bigcap_{k \geq 3}(C_k(\g)+\g^k)}$. 
Thus, we apply the $+$ $\cap$ distribution property to 
$(C_2(\g)+\g^2) \bigcap\biggl(\displaystyle{\bigcap_{k \geq 3}(C_k(\g)+\g^k}\biggr)$
and get,
$$
\j(\g)=C_1(\g)+\g^1 \bigcap 
\biggl( C_2(\g)+\g^2 \bigcap\biggl(\displaystyle{\bigcap_{k \geq 3}(C_k(\g)+\g^k}\biggr)\biggr).
$$
Since $\g^2$ is contained in $\g^1$, 
we can apply again the $+$ $\cap$ distribution property to
$\g^1 \bigcap \biggl( C_2(\g)+
\g^2 \bigcap\biggl(\displaystyle{\bigcap_{k \geq 3}(C_k(\g)+\g^k}\biggr)\biggr)$, 
and get,
$$
\j(\g)=C_1(\g)+\g^1 \cap C_2(\g)
+\g^2 \bigcap\biggl(\displaystyle{\bigcap_{k \geq 3}(C_k(\g)+\g^k}\biggr)\biggr).
$$
Proceeding in this way, we get,
\begin{equation}\label{formula para j}
\j(\g)=C_1(\g)+\sum_{i=1}^{\ell-1}\left(C_{i+1}(\g)\cap \g^{i} \right)
+\g^{\ell} \bigcap \biggl(\bigcap_{k \geq 1} \left(C_{\ell+k}(\g)+\g^{\ell+k}  \right)  \biggr),
\end{equation}
which holds true for all $\ell \in \N$.
Since $\dim_{\F}(\g)$ is finite, there are natural numbers
$m_1$ and $m_2$ for which the descending central and
derived central series stabilize; that is,
$$
\g^0=\g \supset \g^1 \supset \cdots \supset \g^{m_1-1} \supset \g^{m_1}=\g^{m_1+ k}, 
$$
and,
$$
C_1(\g) \subset \cdots \subset C_{m_2-1}(\g) \subset C_{m_2}(\g)=C_{m_2+k}(\g)
$$
for all $k \in \N$, respectively.
If $\g$ is nilpotent, then $\g^{m_1}=\{0\}$ and $C_{m_2}(\g)=\g$. 
Let $m=\operatorname{max}\{m_1,m_2\}$. 
Taking $\ell=m$ in \eqref{formula para j} we get,
\begin{equation}\label{formula para j-2}
\j(\g)=C_1(\g)+\sum_{i=1}^{m-1}\left(C_{i+1}(\g)\cap \g^{i} \right)+\g^{m} \bigcap \biggl(\bigcap_{k \geq 1} \left(C_{m+k}(\g)+\g^{m+k}  \right)  \biggr).
\end{equation}
By the choice of $m$, we conclude that,
$$
C_{m}(\g)+\g^m=C_{m+1}(\g)+\g^{m+1}=\cdots=C_{m+k}(\g)+\g^{m+k},\,\,\forall k \in \N.
$$
Therefore,
$$
\bigcap_{k\geq 1} \biggl(C_{m+k}(\g)+\g^{m+k}\biggr)=C_{m+1}(\g)+\g^{m+1}=C_{m}(\g)+\g^m.
$$
Substituting this in \eqref{formula para j-2}, we obtain,
$$
\j(\g)=C_1(\g)+\sum_{i=1}^{m-1}\left(C_{i+1}(\g)\cap \g^{i} \right)
+\g^m \bigcap \left(C_m(\g)+\g^m \right).
$$
Now, applying the $+$ $\cap$ distribution property, we see that,
$$
\g^m \cap \left(C_{m}(\g)+\g^m \right)=C_m(\g) \cap \g^m +\g^m.
$$
Therefore,
$$
\j(\g)=C_1(\g)+\displaystyle{\sum_{k=1}^{m-1}C_{k+1}(\g) \cap \g^k}+\g^m.
$$
\medskip
{\bf (iii)} 
Let $\j^{\prime}(\g)$ be be defined by,
$$
\j^{\prime}(\g)=C_1(\g)+\sum_{k=1}^{m-1} C_{k+1}(\g) \cap \g^{k},
$$
so that $\j(\g)=\j^{\prime}(\g)+\g^m$. 
We claim that $\j^{\prime}(\g)$ is Abelian and that $[\j^{\prime}(\g),\g^m]=\{0\}$.
To see that $\j^{\prime}(\g)$ is Abelian, take
$1 \leq k \leq \ell\leq m-1$. 
Then $\g^k \supset \g^{\ell}$ and $C(\g^k) \subset C(\g^{\ell})$. 
Let $x \in C_{k+1}(\g) \cap \g^k$ and $y \in C_{\ell+1}(\g) \cap \g^{\ell}$. 
By {\bf Prop. \ref{proposicion auxiliar}}, we have 
$[x,y] \in [C_{k+1}(\g),\g^{\ell}] \subset [C(\g^k),\g^{\ell})] \subset [C(\g^{\ell}),\g^{\ell}]=\{0\}$. 
Therefore, $[\j^{\prime}(\g),\j^{\prime}(\g)]=\{0\}$.

\smallskip
To see that $[\j^{\prime}(\g),\g^m]=\{0\}$,
take $0 \le k \le m-1$. Since $\g^m \subset \g^{m-1}$,
{\bf Prop. \ref{proposicion auxiliar}} implies that, 
$$
[C_{k+1}(\g) \cap \g^k\!,\g^m] \!\subset\! [C_{m}(\g),\g^m] 
\!\subset\! [C(\g^{m-1}),\g^m] \!\subset\! [C(\g^m),\g^m]\!=\!\{0\}.
$$
Therefore, $[\j^{\prime}(\g),\g^m]=\{0\}$.
It now follows that $[\j(\g),\j(\g)]=[\g^m,\g^m]$,
and it is clear that $\j(\g)$ is Abelian if and only if $\g^m$ is Abelian.
If $\ell<m$, then $\g^{\ell} \supset \g^m$, which implies that $\g^m$ is also Abelian.
The choice of $m$ implies that, $\g^{\ell}=\g^m$ if $\ell\ge m$.
Therefore, $\g^m$ is Abelian whenver $\g^\ell$ is.

\medskip
{\bf (iv)} 
Choose again $m=\operatorname{max}\{m_1,m_2\}$ as in 
{\bf (ii)} above, where $m_1$ and $m_2$ are the natural numbers 
for which the descending central and derived central series respectively stabilize.
Observe that, 
\begin{equation}\label{definicion de ide}
\ide(\g)=\sum_{k=1}^m C_k(\g) \cap \g^k
=\sum_{k=1}^{m-1}C_k(\g) \cap \g^k+C_m(\g) \cap \g^m,
\end{equation}
Let $B:\g \times \g \to \F$ be an invariant metric and assume that $\j(\g)$ is Abelian. 
By {\bf Prop. \ref{proposicion auxiliar}}, we know that 
$(\g^m)^{\perp}=C_m(\g)=C(\g^{m-1})$. 
Since $\g^m=\g^{m+1}=[\g,\g^m]$ is Abelian and $B$ is invariant, we have,
$$
B(\g^m,\g^m)=B([\g,\g^m],\g^m) \subset B(\g,[\g^m,\g^m])=\{0\}.
$$
Thus, $\g^m$ is isotropic. Therefore, 
$\g^m \subset (\g^m)^{\perp}=C_m(\g)$
and consequently, $\g^m \cap C_m(\g)=\g^m$. 
Thus, by \eqref{definicion de ide}, we obtain:
$$
\ide(\g)=\sum_{k=1}^m C_k(\g) \cap \g^k=\sum_{k=1}^{m-1}C_k(\g) \cap \g^k+\g^m.
$$
Therefore,
$$
\aligned
\,[\g,\j(\g)] & \subset  \sum_{k=1}^{m-1}[\g,C_{k+1}(\g) \cap \g^k]+[\g,\g^m]\\
\,& \subset \sum_{k=1}^{m-1}C_{k+1}(\g) \cap \g^{k+1}+\g^m \subset \ide(\g).
\endaligned
$$
Now, we shall use {\it the $+$ $\cap$ $\perp$ properties\/} satisfied for any
pair of vector subspaces $V$ and $W$ of a quadratic $\g$; namely, 
$(V+W)^{\perp}=V^{\perp} \cap W^{\perp}$ and $(V\cap W)^{\perp}=V^{\perp}+W^{\perp}$,
respectively. Therefore,
$$
\aligned
\ide(\g)^{\perp}&=\biggl(\sum_{k \in \N}C_k(\g) 
\cap \g^k \biggr)^{\perp}=\bigcap_{k \in \N}\biggl(C_k(\g) \cap \g^k \biggr)^{\perp}\\
\,&=\bigcap_{k \in \N}\biggl(C_k(\g)^{\perp}
+(\g^k)^{\perp}\biggr)=\bigcap_{k \in \N}\biggl(\g^k + C_k(\g)\biggr)=\j(\g).
\endaligned
$$

\smallskip
Now, recall from {\bf (iii)} that $\j^{\prime}(\g)$ is defined in terms of
the canonical ideals $C_1(\g)$ and $C_{k+1}(\g)\cap\g^k$ so that
$\j(\g)=\j^{\prime}(\g)+\g^m$. Whence, $\j^{\prime}(\g)$ also satisfies
$[\g,\j^{\prime}(\g)] \subset \ide(\g)$. 
The reason why we consider the ideal $\j(\g)$ instead $\j^{\prime}(\g)$
for our work is because that in the presence of an invariant metric in $\g$,
$\j(\g)$ is the orthogonal complement of $\ide(\g)$.

\smallskip
{\bf (v)} 
Let $\g$ be a solvable non-Abelian Lie algebra 
admitting an invariant metric. 
Suppose that there exists $\ell \in \N$ such that $\g^{\ell}$ is Abelian. 
Then $\g \neq \g^{\ell}$. If $C(\g)=\{0\}$, then $\g=[\g,\g]=\g^1=\cdots =\g^{\ell}$, 
which is a contradiction. 
Therefore, $C(\g) \neq \{0\}$. This guarantees that $\ide(\g) \neq \{0\} \neq \j(\g)$.
\end{proof}

\smallskip
\begin{Remark}\label{aclaracion1}{\rm
It is worth noting that the definition of $\ide(\g)$ and $\j(\g)$ 
given in {\bf Lemma \ref{lema auxiliar 2}} coincides
with the characterization of the two ideals
given in \cite{Kath} when the Lie algebra $\g$ is nilpotent.
As a matter of fact, the ideals appearing in \cite{Kath}
are constructed using the  $+$ $\cap$ distribution property with
a family of ideals $R_k(\g)$ of $\g$, satisfying,
$$
\g=R_0(\g) \supset R_1(\g) \supset \cdots \supset R_{\ell}(\g)=\{0\},
$$
(see {\bf Def. 4.1} in \cite{Kath}). 
It is not difficult to prove that for a nilpotent Lie algebra $\g$, $R_k(\g)=\g^k$ for all $k$. 
Thus, in the solvable non-nilpotent case, we obviously have,
$R_k(\g) \neq \g^k \neq \{0\}$ for all $k\in\N$. 
In particular, {\it the ideals $\ide(\g)$ and $\j(\g)$ given in\/} 
{\bf Lemma \ref{lema auxiliar 2}} 
{\it do differ in the solvable non-nilpotent case from those of\/} \cite{Kath}.
At this point it is convenient to make the following
{\bf assumption:} {\it From now on, 
all solvable Lie algebras considered in this work
will not be Abelian\/;} 
that is, we shall assume that if $\g$ is solvable 
is because $\g^{(\ell)}=\{0\}$, for some $\ell>1$.
It will also be assumed that that there is some $k\in\N$
such that $[\g^k, \g^k] =\{0\}$; {\it that is, $\g$ has an
Abelian descending central ideal.}
}
\end{Remark}

\smallskip
As a corollary, the ideals $\ide(\g)$ and $\j(\g)$ of {\bf Lemma \ref{lema auxiliar 2}}
now yield the following vector space decomposition of a solvable Lie algebra $\g$ 
under the assumption just stated.

\smallskip
\begin{Cor}\label{cor descomposicion}{\sl
Let $\g$ be a solvable Lie algebra with one Abelian descending central ideal. 
Let $\ide(\g)$ and $\j(\g)$ the Abelian characteristic ideals of {\bf Lemma \ref{lema auxiliar 2}}. 
Then, there are complementary subspaces $\h$ and $\a$ such that,
\begin{itemize}

\item[(i)] $\g=\h \oplus \j(\g)$, with $\{0\} \neq \h \simeq \g/\j(\g)$.

\item[(ii)] $\j(\g)=\a \oplus \ide(\g)$ and $\g=\h \oplus \a \oplus \ide(\g)$.

\end{itemize}
Furthermore, if $\g$ admits an invariant metric, 
then $\ide(\g)^{\perp}=\j(\g)$ and $\h$ and $\a$ can be chosen
in such a way that $\h$ is isotropic and $\a^{\perp}=\h \oplus \ide(\g)$.
}
\end{Cor}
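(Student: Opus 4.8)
The plan is to obtain (i) and (ii) from elementary linear algebra together with parts (i) and (iii) of \textbf{Lemma \ref{lema auxiliar 2}}, and then to deduce the metric statement from part (iv) by a Witt-type hyperbolic decomposition of $(\g,B)$ along the totally isotropic ideal $\ide(\g)$.

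For (i), I would first invoke the standing assumption (\textbf{Rmrk. \ref{aclaracion1}}) that $\g$ is non-Abelian and that $\g^{k}$ is Abelian for some $k\in\N$; by part (iii) of \textbf{Lemma \ref{lema auxiliar 2}} the latter makes $\j(\g)$ Abelian. Since $\g$ is not Abelian we cannot have $\j(\g)=\g$, so any vector-space complement $\h$ to $\j(\g)$ in $\g$ is nonzero and gives $\g=\h\oplus\j(\g)$ with $\h\simeq\g/\j(\g)$. For (ii) the inclusion $\ide(\g)\subset\j(\g)$ from part (i) of the Lemma lets me choose a complement $\a$ of $\ide(\g)$ inside $\j(\g)$, so $\j(\g)=\a\oplus\ide(\g)$ and hence $\g=\h\oplus\a\oplus\ide(\g)$.

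Now assume $B$ is an invariant metric. Because $\j(\g)$ is Abelian, part (iv) of the Lemma gives $\ide(\g)^{\perp}=\j(\g)$, and non-degeneracy upgrades this to $\j(\g)^{\perp}=\ide(\g)$ and $\dim\h=\dim\ide(\g)$; moreover $\ide(\g)\subset\ide(\g)^{\perp}$ shows $\ide(\g)$ is totally isotropic. The key step is the standard hyperbolic-partner construction: to the totally isotropic subspace $\ide(\g)$ of the non-degenerate space $(\g,B)$ one attaches a totally isotropic subspace $\h$ with $\dim\h=\dim\ide(\g)$, with $\ide(\g)\cap\h=\{0\}$, with $B\vert_{\ide(\g)\times\h}$ a perfect pairing, and with $\ide(\g)\oplus\h$ non-degenerate. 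I would take this isotropic $\h$ as the complement in (i) and set $\a=(\ide(\g)\oplus\h)^{\perp}$.

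It then remains to check consistency. Perfectness of the pairing forces $\h\cap\ide(\g)^{\perp}=\{0\}$, i.e. $\h\cap\j(\g)=\{0\}$, so $\g=\h\oplus\j(\g)$ by dimension; non-degeneracy of $\ide(\g)\oplus\h$ gives $\a\cap\ide(\g)=\{0\}$ with $\a\subset\ide(\g)^{\perp}=\j(\g)$, whence $\j(\g)=\a\oplus\ide(\g)$ by dimension; and $\a^{\perp}=(\ide(\g)\oplus\h)^{\perp\perp}=\h\oplus\ide(\g)$ by non-degeneracy, while $\h$ is isotropic by construction. The main obstacle is exactly the hyperbolic-partner step: one must produce an \emph{isotropic} complement, not merely a dual basis whose span could fail to be isotropic, which requires the usual Gram--Schmidt-type correction available since $\operatorname{char}\F=0$. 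Everything else reduces to the $+\,\cap\,\perp$ bookkeeping already exploited in \textbf{Lemma \ref{lema auxiliar 2}}.
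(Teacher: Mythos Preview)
Your proposal is correct and follows essentially the same approach as the paper, which in fact gives no explicit proof of this corollary: parts (i)--(ii) are treated as immediate from \textbf{Lemma \ref{lema auxiliar 2}}, and the metric statement is handled at the start of \S3 by a one-line appeal to Witt decomposition with respect to the isotropic ideal $\ide(\g)$. Your argument simply fills in the linear-algebraic details (non-Abelianity forces $\j(\g)\neq\g$; the hyperbolic-partner construction produces the isotropic $\h$ and the non-degenerate $\a$) that the paper leaves implicit.
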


\smallskip
\begin{Remark}\label{aclaracion2}
{\rm 
An identical vector space decomposition of $\g$ is given in \cite{Kath}
for the two ideals defined there. Differences between their results 
and ours appear for the solvable non-nilpotent $\g$'s we deal with hereby.
To start with, the Lie brackets are different in the non-coincident cases.
For example, $\h$ acts semisimply in their quotient $\j(\g)/\ide(\g)\simeq \a$
(see {\bf Lemma 4.2.(b)} in \cite{Kath}) and this action is trivial if $\g$ is nilpotent,
but it is not when $\g$ is solvable but not nilpotent.
On the other hand, under our hypotheses, $\h$ {\it always acts trivially on\/} $\j(\g)/\ide(\g)$,
even for non-nilpotent $\g$, but this is due to the fact that $\j(\g)$ is Abelian,
which cannot be guaranteed for the corresponding ideal obtained in \cite{Kath}.
}
\end{Remark}

\smallskip
\section{Solvable Quadratic Lie Algebras with one Abelian descending central ideal}

\smallskip
Let $\g$ be a solvable quadratic Lie algebra with 
Lie bracket $[\,\cdot\,,\,\cdot\,]:\g\times\g\to\g$,
invariant metric $B:\g\times\g\to\F$ 
and assume that there is some $\ell \in \N$ such that $\g^{\ell}$ is Abelian.
Let $\ide:=\ide(\g)$ and $\j:=\j(\g)$
be the characteristic ideals defined in {\bf Lemma \ref{lema auxiliar 2}}.
It follows from {\bf Lemma \ref{lema auxiliar 2}.(v)} that 
$\g\ne [\g,\g]$, and in fact, $C(\g)=[\g,\g]^\perp$.
We also know from {\bf Lemma \ref{lema auxiliar 2}} that $\ide^\perp=\j \supset \ide$. 
Thus, $\ide$ is a canonically defined isotropic ideal of $\g$.
By Witt decomposition, $\g=\h\oplus\a\oplus\ide$, with $\h$ isotropic
and $\ide^\perp=\a\oplus\ide$. In particular,
$\a$ is non-degenerate and $\a^\perp=\h\oplus\ide$.

\smallskip
Observe that $B\vert_{\h\times\ide}:\h\times\ide\to\F$ cannot degenerate
and one may identify $\ide$ with $\h^*$ (or else, identify $\h$ with $\ide^*$,
which at the end is a matter of convenience depending on the
data one wants to start with). The identification of $\ide$ with $\h^*$
goes as follows: 
Start with $\theta\in \ide$ and consider the map $\theta\to\theta^\flat\in\h^*$
defined by $\theta^\flat(x)=B(\theta,x)$, for any $x\in\h$. 
This map is equivariant
and intertwines the representation $\rho$ of $\h$ in $\ide$ with the 
coadjoint representation $\ad_{\h}^*$ of $\h$ in $\h^*$.
Indeed, since, $\ide^\perp=\a\oplus\ide$, 
$$
\aligned
\left(\rho(x)(\theta)\right)^\flat(y)&=
B(\rho(x)(\theta),y) = 
B([x,\theta],y)
\\
&=-B(\theta,[x,y])
=-B(\theta,[x,y]_{\h})
\\
&=-(\theta^\flat)([x,y]_{\h})
= -(\theta^\flat)\circ\ad_{\h}(x)\,(y)
\\
& =\left(\ad_{\h}^*(x)\,\theta^\flat\right)(y).
\endaligned
$$
{\bf Convention.} Having assumed that $\g=\h\oplus\a\oplus\ide$
is quadratic with invariant symmetric 
bilinear form $B$, we shall, from now on, 
identify $\ide$ with $\h^*$ 
and will use the coadjoint representation in $\h^*$ instead of $\rho$.

\smallskip
We now want to look at the decomposition of the representation $R:\h\to\gl(\a\oplus\h^*)$
into the map $\varphi:\h\to\Hom_{\F}(\a,\h^*)$,
and the coadjoint representation $\ad_{\h}^*:\h\to\gl(\h^*)$.
Observe that for any $y\in\h$,
$$
\aligned
B(\varphi(x)(v),y) & = B([x,v],y)=-B(v,[x,y])
\\
&=-B(v,[x,y]_{\h}+\lambda(x,y)+\mu(x,y))
\\
&=-B(v,\lambda(x,y)).
\endaligned
$$ 
This says that 
$\a^*\ni -\lambda(x,y)^\flat=B(\varphi(x)(\,\cdot\,),y)$
is related to the map,
$\varphi(x)^*:\h\to \a^*$,
through the following:
\begin{equation}\label{lambda-varphi}
-\lambda(x,y)^\flat(v) =(\varphi(x)^*(y))(v)= (\varphi(x)(v))(y).
\end{equation}
In other words, when the Lie algebra is quadratic,
$\lambda$ can be built up from $\varphi$ or viceversa,
and only one of the two is needed in specifying
the Abelian extension of {\bf Cor. \ref{corolario ext abelianas}}.

\smallskip
Finally, observe that for any triple $x$, $y$ and $z$ in $\h$, we get:
\begin{equation}\label{mu ciclicla}
\begin{array}{rl}
\mu(x,y)^\flat(z)&=B(\mu(x,y),z)=B([x,y],z)
\\
&=B(x,[y,z])
=B(x,\mu(y,z))=\mu^\flat(y,z)(x). 
\end{array}
\end{equation}
Therefore, under the identification of $\ide$ with $\h^*$,
the $2$-cochain $\mu$ with values in $\h^*$, has the cyclic property,
$$
\mu(x,y)(z)=\mu(y,z)(x).
$$
It is worth noting that 
cyclic cochains are found in several related contexts; 
see for example \cite{BajoBenayadiMedina} or \cite{Bordemann}. 
The cyclic property appears here due to the existence of an invariant metric.

\smallskip
Now, observe that  \eqref{lambda-varphi} says that in the presence
of an invariant metric $B$ on $\g$, $\varphi$ and $\lambda$ can be 
determined one from each other.
Assume for the moment that $\g$ does not have an invariant metric
but suppose that we are given a non-degenerate symmetric bilinear
form $B_{\a}:\a \times \a \to \F$ in $\a$. 
The non-degeneracy of $B_{\a}$ implies that there exists a bilinear map 
$\lambda_{\varphi}:\h \times \h \to \a$, such that
for all $x$ and $y$ in $\h$,
\begin{equation}
\label{ecuacion auxiliar lambda}
\varphi^{*}(x)(y)(v)=-B_{\a}\left(\lambda_{\varphi}(x,y),v\right),\quad 
\text{for all\ }v \in \a.
\end{equation} 
Under the guide given by \eqref{lambda-varphi},
a necessary condition for $\g$ to admit an invariant metric is
that $\lambda_{\varphi}$ coincides with $\lambda$. 
In principle, $\lambda_{\varphi}$ and $\lambda$ may not be the same,
but if the difference $\lambda_{\varphi}-\lambda$ is a coboundary,
we may use {\bf Cor. \ref{criterio cohomologico}} to obtain 
a useful criterion to determine whether a solvable Lie algebra 
having an Abelian descending central ideal admits an invariant metric or not. 
Thus, we may now rephrase the main result on Abelian extensions
for the case in which $\g=\h(\lambda,\mu,\varphi,\rho)$ admits 
an invariant metric $B$ as follows:

\smallskip
\begin{Theorem}\label{nilp cuad}{\sl
{\rm {\bf (1)}}
Let $(\h,[\cdot,\cdot]_{\h})$ be a finite dimensional Lie algebra. 
Let $\h$ act on the dual vector space $\h^*$ through the coadjoint representation
$\ad_{\h}^*:\h\to\gl(\h^*)$. Let  $\a$ be a finite dimensional trivial $\h$-module. 
Let $\overline{\ad_{\h}^{*}}:\h \to \gl(\Hom_{\F}(\a,\h^{*}))$ be the 
tensor product representation on $\h^*\otimes\a^*\simeq \Hom_{\F}(\a,\h^{*})$
so that $\overline{\ad_{\h}^{*}}(x)(T)=\ad_{\h}^{*}(x) \circ T$, for all $x \in \h$ and 
all $T \in \Hom_{\F}(\a,\h^{*})$. 
Let $\varphi:\h\to\Hom_{\F}(\a,\h^*)$ be a 1-cocycle 
with coefficients in the representation $\overline{\ad_{\h}^{*}}$
and let
$e_{\varphi}:C(\h,\a) \to C(\h,\h^{*})$, be the degree-one map
of cochain complexes defined in \eqref{operador e}. 
Finally, let 
$\d_{\varphi}:C(\h,\a)\oplus C(\h,\h^{*}) \to C(\h,\a)\oplus C(\h,\h^{*})$
be the differential map,
$$
\d_{\varphi}
\begin{pmatrix}
\lambda \\ \mu
\end{pmatrix}
=
\begin{pmatrix}
\d\lambda \\
\d\mu + e_{\varphi}(\lambda)
\end{pmatrix}.
$$
Given a $2$-cochain $\left(\begin{smallmatrix}\lambda\\ \mu\end{smallmatrix}\right)$ 
define the following skew-symmetric bilinear map
$[\,\cdot\,,\,\cdot\,]_{\{\lambda,\mu\}}:\g \times \g \to \g$:
$$
\aligned
\forall\, x,y \in \h, & \quad \ [x,y]_{\{\lambda,\mu\}}=[x,y]_{\h}+\lambda(x,y)+\mu(x,y),\\
\forall\, x \in \h,\forall\, v \in \a, & \quad \  [x,v]_{\{\lambda,\mu\}}=\varphi(x)(v),\\
\forall\,  x \in \h,\forall\, \alpha \in \h^{*}, & \quad \  [x,\alpha]_{\{\lambda,\mu\}}=\ad_{\h}^{*}(x)(\alpha),\\
\forall\, u,v \in \a,\,\forall\, \alpha,\beta \in \h^{*}, & \quad \ [u+\alpha,v+\beta]_{\{\lambda,\mu\}}=0.
\endaligned
$$
Then $[\,\cdot\,,\,\cdot\,]_{\{\lambda,\mu\}}$ defines a Lie bracket in $\g$
if and only if $\left(\begin{smallmatrix}\lambda\\ \mu\end{smallmatrix}\right) \in \Ker D_{\varphi}$.

\smallskip
{\rm {\bf (2)}}
Let $B_{\a}:\a \times \a \to \F$ be a symmetric and non-degenerate bilinear form on $\a$.
Let $B:\g \times \g \to \F$ be the bilinear form defined by
\begin{equation}\label{definicion de metrica}
B(x+u+\alpha,y+v+\beta)=\alpha(y)+\beta(x)+B_{\a}(u,v),
\end{equation} 
for all $x,y \in \h$, $u,v \in \a$ and $\alpha, \beta \in \h^{*}$. Let $\varphi^{*}:\h \to \Hom_{\F}(\h,\a^{*})$ be the linear map defined by
$(\varphi^*(x)(y))(v)= (\varphi(x)(v))(y)$,
for all $x,y \in \h$ and $v \in \a$.
Let $\lambda_{\varphi}:\h \times \h \to \F$ be the bilinear satisfying:
$$
\varphi^{*}(x)(y)=-B_{\a}(\lambda_{\varphi}(x,y),\,\cdot\,),\quad \mbox{ for all }x,y \in \h,
$$
just as in \eqref{ecuacion auxiliar lambda}. 
If $(\g,[\cdot,\cdot]_{\{\lambda,\mu\}},B)$ is a quadratic Lie algebra 
then the following conditions are satisfied: 
\begin{itemize}

\item[{\bf (b1)}] $\mu(x,y)(z)=\mu(y,z)(x)$ for all $x,y,z \in \h$.

\item[{\bf (b2)}] There exists $\tau \in \Ker(e_{\varphi})$ 
such that $\lambda_{\varphi}=\lambda+\d\tau$.

\end{itemize}
Reciprocally, if {\bf (b1)} and {\bf (b2)} hold true, 
then $(\g,[\cdot,\cdot]_{\{\lambda,\mu \}})$ admits an invariant metric. 
Furthermore, any solvable quadratic Lie algebra $\g$ having
an Abelian descending central ideal, has the structure $\h\oplus\a\oplus\h^*$ 
where,
$\h^{*} \simeq \ide(\g)$, $\a \oplus \h^{*}\simeq \j(\g)$
and the pair $\{\lambda,\mu\}$ is defined through the components
of the Lie bracket in $\g$ of any two elements in $\h$
along $\a$ and $\h^*$, respectively.
}
\end{Theorem}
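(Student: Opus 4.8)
The plan is to handle the two parts of \textbf{Thm. \ref{nilp cuad}} separately: part (1) is a direct specialization of the general construction, while part (2) combines a short invariance computation with a cohomological isomorphism trick. For part (1) I would specialize \textbf{Cor. \ref{corolario ext abelianas}} to the case $\ide=\h^*$ with $\rho=\ad_{\h}^*$. The coadjoint map is a representation, and $\varphi$ is assumed to be a $1$-cocycle for $\overline{\ad_{\h}^*}$, so the condition $\d\varphi=0$ in \eqref{diferenciales} holds automatically. The two remaining requirements $\d\lambda=0$ and $\d\mu+e_{\varphi}(\lambda)=0$ are precisely the statement that $\d_{\varphi}\left(\begin{smallmatrix}\lambda\\ \mu\end{smallmatrix}\right)=0$, so $[\,\cdot\,,\,\cdot\,]_{\{\lambda,\mu\}}$ is a Lie bracket if and only if $(\lambda,\mu)\in\Ker\d_{\varphi}$, as claimed.

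For the forward direction of part (2), I would expand the invariance identity $B([X,Y],Z)=B(X,[Y,Z])$ with $X,Y,Z$ ranging over the three summands $\h$, $\a$, $\h^*$, using \eqref{definicion de metrica}. Since $B$ pairs only $\h$ with $\h^*$ and $\a$ with itself, and since $\rho=\ad_{\h}^*$ is the coadjoint representation, every configuration involving two arguments in $\a\oplus\h^*$, or involving only the $\h$--$\h^*$ pairing, is satisfied automatically. Only two configurations survive: the purely $\h$-valued one yields $\mu(x,y)(z)=\mu(y,z)(x)$, which is \textbf{(b1)} and is exactly the cyclic property already recorded in \eqref{mu ciclicla}; and the configurations mixing $\h$ and $\a$ collapse, after invoking the defining relation \eqref{ecuacion auxiliar lambda} of $\lambda_{\varphi}$ together with the skew-symmetry of $\lambda$, to the single identity $\lambda=\lambda_{\varphi}$, which is \textbf{(b2)} with $\tau=0$.

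The crux is the converse. Assuming \textbf{(b1)} and \textbf{(b2)}, write $\lambda_{\varphi}=\lambda+\d\tau$ with $\tau\in\Ker(e_{\varphi})$ and pass to $\g'=\h(\lambda_{\varphi},\mu,\varphi,\ad_{\h}^*)$. The delicate point I expect to carry the argument is the role of the hypothesis $\tau\in\Ker(e_{\varphi})$: it forces the $\d_{\varphi}$-coboundary $\d_{\varphi}\left(\begin{smallmatrix}\tau\\ 0\end{smallmatrix}\right)=\left(\begin{smallmatrix}\d\tau\\ e_{\varphi}(\tau)\end{smallmatrix}\right)=\left(\begin{smallmatrix}\d\tau\\ 0\end{smallmatrix}\right)$ to be supported only in the $\lambda$-slot, so that $(\lambda_{\varphi},\mu)=(\lambda,\mu)+\d_{\varphi}\left(\begin{smallmatrix}\tau\\ 0\end{smallmatrix}\right)$ leaves $\mu$ untouched and hence keeps \textbf{(b1)} valid. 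Because $\d_{\varphi}^2=0$ (this is the anticommutation of \textbf{Prop. \ref{anticonmutativo}}) and $(\lambda,\mu)$ is a $\d_{\varphi}$-cocycle by part (1), the pair $(\lambda_{\varphi},\mu)$ is again a cocycle, so $\g'$ is a genuine Lie algebra, and \textbf{Cor. \ref{criterio cohomologico}} (realized by the explicit coboundary move \eqref{criterio cohomologico 1}) provides an isomorphism $\g\simeq\g'$. On $\g'$ the form $B$ of \eqref{definicion de metrica} is invariant: by the analysis of the forward direction the only conditions to check are \textbf{(b1)}, which holds, and $\lambda'=\lambda_{\varphi}$, which holds by construction, noting that \textbf{(b2)} automatically makes $\lambda_{\varphi}$ skew-symmetric since it equals the skew $\lambda+\d\tau$. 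Transporting $B$ along the isomorphism then yields the desired invariant metric on $\g$.

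Finally, for the closing structural assertion I would assemble the results already established before the theorem. For a solvable quadratic $\g$ with an Abelian descending central ideal, \textbf{Lemma \ref{lema auxiliar 2}} produces the canonical ideals $\ide(\g)\subset\j(\g)$ with $\j(\g)$ Abelian, $[\g,\j(\g)]\subset\ide(\g)$ and $\ide(\g)^{\perp}=\j(\g)$; \textbf{Cor. \ref{cor descomposicion}} gives the splitting $\g=\h\oplus\a\oplus\ide(\g)$ with $\h$ isotropic and $\a=\j(\g)/\ide(\g)$; and the flat map $\theta\mapsto\theta^{\flat}$, $\theta^{\flat}(x)=B(\theta,x)$, identifies $\ide(\g)$ with $\h^*$ equivariantly, intertwining $\rho$ with $\ad_{\h}^*$. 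Under this identification $\a\oplus\h^*\simeq\j(\g)$, and $\{\lambda,\mu\}$ is read off as the $\a$- and $\h^*$-components of $[x,y]$ for $x,y\in\h$, exactly matching the data of the theorem.
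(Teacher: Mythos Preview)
Your proposal is correct and follows essentially the same route as the paper: part (1) is the specialization of \textbf{Cor.~\ref{corolario ext abelianas}} to $\rho=\ad_{\h}^{*}$, the forward direction of part (2) reads off \textbf{(b1)} and \textbf{(b2)} with $\tau=0$ from \eqref{lambda-varphi} and \eqref{mu ciclicla}, the converse passes via \textbf{Cor.~\ref{criterio cohomologico}} to $\h(\lambda+\d\tau,\mu,\varphi,\ad_{\h}^{*})$ and verifies $B$ there, and the structural coda invokes \textbf{Lemma~\ref{lema auxiliar 2}} and \textbf{Cor.~\ref{cor descomposicion}}. Your explicit remark that $\tau\in\Ker(e_{\varphi})$ is precisely what keeps the $\mu$-component fixed under the coboundary move (so that \textbf{(b1)} survives and $\g'$ is still a Lie algebra) makes transparent a step the paper leaves implicit in its appeal to \eqref{criterio cohomologico 1}.
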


\smallskip
\begin{proof}
{\bf (1) } The statement gives necessary and sufficient conditions for 
$[\,\cdot\,,\,\cdot\,]_{\{\lambda,\mu\}}$ to be a Lie bracket on $\g$.
The general conditions have already been given
in {\bf Cor. \ref{corolario ext abelianas}}.
We are applying it here to 
the special case in which $\ide(\g)\simeq\h^*$
and $\rho=\ad^*_{\h}$ are fixed,
and  $\varphi$ is obtained from $\lambda$ (or viceversa) due to
the existence of an invariant metric in $\g$ as shown in \eqref{lambda-varphi}.

\smallskip
First suppose that the bilinear $B$ given in \eqref{definicion de metrica}
is indeed an invariant metric in $\g$. 
Then, by \eqref{lambda-varphi} and \eqref{mu ciclicla}
imply that {\bf (b1)} and {\bf (b2)} hold true with $\tau=0$.

\smallskip
Conversely, assume {\bf (b1)} and {\bf (b2)} are satisfied. 
It follows from equations \eqref{criterio cohomologico 1} in
{\bf Cor. \ref{criterio cohomologico}} that the Lie algebra 
$(\g,[\cdot,\cdot]_{\{\lambda,\mu \}})=\h(\lambda,\mu,\varphi,\ad^{*}_{\h})$
is isomorphic to $(\g,[\cdot,\cdot]_{\{\lambda+\d\!\tau,\,\mu \}})
=\h(\lambda+\d\tau,\mu,\varphi,\ad^{*}_{\h})$. 
The vector space decomposition of the underlying $\g$ is the same 
in both cases; namely,
$\g=\h \oplus \a \oplus \h^{*}$. 
Let $B$ be the symmetric, non-degenerate 
bilinear form defined in $\h \oplus \a \oplus \h^{*}$ 
by \eqref{definicion de metrica}. 
We claim that $B$ is an invariant metric on 
$\h(\lambda+\d\tau,\mu,\varphi,\ad^{*}_{\h})$. 
Indeed, the Lie bracket in $\h(\lambda+\d\tau,\mu,\varphi,\ad^{*}_{\h})$ is given by:
$$
\aligned
\,[x,y]_{\{\lambda+\d\!\tau,\,\mu \}}&=[x,y]_{\h}+(\lambda+\d\tau)(x,y)+\mu(x,y),\\
[x,v]_{\{\lambda+\d\!\tau,\,\mu \}}&=\varphi(x)(v),\\
[x,\alpha]_{\{\lambda+\d\!\tau,\,\mu \}}&=\ad_{\h}(x)^{*}(\alpha),
\endaligned
$$
for all $x,y \in \h$, $v \in \a$ and for all $\alpha \in \h^{*}$. 
Notice that the cyclic condition for $\mu$ given in \eqref{mu ciclicla} is satisifed. 
In addition,
$$
\varphi^{*}(x)(y)(v)=-B_{\a}(\lambda_{\varphi}(x,y),v)=-B_{\a}((\lambda+\d\tau)(x,y),v),
$$
for all $x,y \in \h$ and for all $v \in \a$.
Therefore, \eqref{lambda-varphi} is satisfied for the pair $(\lambda+\d\tau,\varphi)$, 
thus proving that $(\g,[\cdot,\cdot]_{\{\lambda+\d\!\tau,\,\mu \}},B)$
is a quadratic Lie algebra. Consequently,
$(\g,[\cdot,\cdot]_{\{\lambda,\mu\}})$ admits an invariant metric.

\smallskip
Now, we want to show that {\it any\/} solvable
quadratic Lie algebra $\g$ having an Abelian descending central ideal
has its Lie bracket and its invariant metric given as in the statement.
Let $\g$ be such an algebra.
By {\bf Lemma \ref{lema auxiliar 2}.(v)}, 
we know that $\{0\} \neq C(\g) \subset \j(\g)$,
which implies that $\j(\g) \neq \{0\}$. Also, {\bf Lemma \ref{lema auxiliar 2}.(iv)}
says that $\j(\g)^{\perp}\!=\ide(\g)$. 
If $\ide(\g)=\{0\}$, the non-degeneracy of the invariant metric implies that $\g=\j(\g)$.
But, {\bf Lemma \ref{lema auxiliar 2}.(iii)}, says that $\j(\g)$ is Abelian, contrary to the hypothesis on $\g$.
Therefore, $\ide(\g) \neq \{0\}$. Let $\a$ and $\h$ be two subspaces of $\g$ such that
$\j(\g)=\a \oplus \ide(\g)$, and $\g=\h \oplus \j(\g)$. Then $\g=\h \oplus \a \oplus \ide(\g)$. 
Now apply {\bf Cor. \ref{corolario ext abelianas}}
to conclude that the Lie bracket on $\g$
is precisely the one given in the statement. 
Furthermore, the invariance and the non-degeneracy of the metric
also imply that the 2-cocycle
$\Lambda\leftrightarrow \left(\begin{smallmatrix}\lambda \\ \mu\end{smallmatrix}\right)$
and the 1-cocycle $\varphi$, satisfy the conditions given in {\bf (b1)} and {\bf (b2)}.
\end{proof}

\smallskip
\begin{Example}\label{ejemploX}{\rm
We shall now give an example of a solvable non-nilpotent
quadratic Lie algebra $\g$ having an Abelian descending central ideal
and we shall determine its canonical ideals $\ide(\g)$ and $\j(\g)$.

\smallskip
Let $V$ be a finite dimensional vector space over $\F$.
Let $B_V:V \times V \to \F$ be a non-degenerate, symmetric 
bilinear form on $V$. Let $D \in \End_{\F}(V)$ be such that 
$B_V(D(v),w)=-B_V(v,D(w))$, for all $v,w \in V$.
Finally, let $c$ be defined in such a way that $\g=\F D \oplus V \oplus \F c$
becomes a $(\operatorname{dim}V+2)$-dimensional vector space over $\F$
with Lie bracket,
\begin{equation}\label{ejemplo1}
[\alpha D+v+\beta c,\alpha^{\prime}D+w+
\beta^{\prime}c]=\alpha D(w)-\alpha^{\prime}D(v)+B(D(v),w)c,
\end{equation} 
for all $v,w\in V$ and for all $\alpha,\alpha^{\prime},\beta,\beta^{\prime}\in\F$. 
Clearly,
$$
C(\g)=\Ker(D) \oplus \F\,c,\qquad
\g^k =\Im(D^k) \oplus \F\,c,\quad\text{for all\ }k\in\N.
$$
Since $c \in C(\g)$, it is clear that $\g$ is a nilpotent Lie algebra if and only if $D$ is a 
nilpotent linear map on $V$. Moreover, if $D$ is non-nilpotent, then,
$$
C_k(\g)=\Ker(D^k) \oplus \F \,c,\quad\text{for all\ }k\in\N.
$$
Indeed, suppose that $C_k(\g)=\Ker(D^k) \oplus \F\,c$ and proceed by induction on $k$. 
Let $\alpha D+v+\beta c \in C_{k+1}(\g)$
and choose $w\in V$ such that $D^{k+1}(w) \neq 0$. 
The definition of $C_{k+1}(\g)$, leads to, 
$$
[\,\alpha D+v+\beta c\,,\,w\,] \in C_k(\g)=\Ker(D^k) \oplus \F\,c.
$$
Then, $\alpha\,D(w) \in \Ker(D^k)$, which implies that $\alpha\,D^{k+1}(w)=0$.
Therefore, $\alpha=0$.
On the other hand $[\,D\,,\,v+\beta\,c\,] \in C_k(\g)$ implies
$D(v) \in \Ker(D^k)$. Therefore, $v \in \Ker(D^{k+1})$.
This proves that $C_{k+1}(\g) \subset \Ker(D^{k+1}) \oplus \F\,c$. 
We shall now prove that $C_{k+1}(\g) \supset \Ker(D^{k+1}) \oplus \F\,c$.
It is clear that $\F\,c \subset C_{k+1}(\g)$. Let $v \in \Ker(D^{k+1})$. Then,
$$
[\alpha^{\prime}D+w+\beta^{\prime},v]
=\alpha^{\prime}D(v)+B_V(D(w),v)\,c \in \Ker(D^k) \oplus \F\,c=C_{k}(\g),
$$
for all $\alpha^{\prime},\beta^{\prime} \in \F$ and $w \in V$. 
This amounts to say that $v \in C_{k+1}(\g)$ which proves that 
$\Ker(D^{k+1}) \oplus \F\,c \subset C_{k+1}(\g)$.  
Therefore, $C_{k+1}(\g)=\Ker(D^{k+1}) \oplus \F\,c$. 

\smallskip
Now assume that $D$ is nilpotent, with $D^{m}=0$ but $D^{m-1} \neq 0$.
Just as in the non-nilpotent case, we have 
$C_k(\g)=\Ker(D^k) \oplus \F\,c$, for all $1 \leq k \leq m-1$. 
Observe that for $k=m$, 
$\Im D \subset \Ker D^{m-1}$. Then, it is not difficult to prove that $C_{m}(\g)=\g$.

\smallskip
Now consider the Lie algebra $\g=\F\,D \oplus V \oplus \F\,c$ 
with Lie bracket defined by \eqref{ejemplo1}. 
Let $m$ be the positive integer for which the descending central series 
and the derived central series both stabilize.
Using {\bf Lemma \ref{lema auxiliar 2}.(ii)} we conclude that,
$$
\aligned
\ide(\g)&=\left(\sum_{k=1}^m \Ker(D^k) \cap \Im(D^k) \right) \oplus \F\,c,\,\mbox{ and }\\
\j(\g)&=\left(\sum_{k=0}^{m-1}\Ker(D^{k+1}) \cap \Im(D^k)+\Im(D^m)\right) \oplus \F\,c.
\endaligned
$$

Now, we define a symmetric bilinear form $B:\g \times \g \to \F$, by:
$$
B\left(\alpha D+v+\beta c,\alpha^{\prime}+w+\beta^{\prime}c\right)
=B_V\left(v,w \right)+\alpha \beta^{\prime}+\alpha^{\prime} \beta,
$$
for all $\alpha$, $\alpha^\prime$, $\beta$ and $\beta^\prime$ in $\F$ and
any $v$ and $w$ in $V$.
Then, $B$ is an invariant metric in the solvable Lie algebra $\g$. 
In fact, $\g$ is the {\it double extension of $V$ by the pair 
$(D,\omega)$\/,} where $\omega:V \times V \to \F$ is the
skew-symmetric bilinear form on $V$ defined by 
$\omega(u,v)=B \left(D(u),v \right)$, for all $u,v \in V$ (see \cite{MCGS}, {\bf Thm. 3.2 }).
}

\smallskip
{\rm
With these preliminaries in mind, we may now produce a concrete
example of a solvable quadratic Lie algebra having an Abelian 
descending central ideal.
Indeed, let $V$ be a 3-dimensional space over $\F$.
Let $\{v_1,v_2,v_3\}$ be a basis for $V$ and let 
$B_V:V \times V \to \F$ be the bilinear form defined by,
$$
\aligned
& B_V(v_1,v_1)=B_V(v_2,v_3)=1,\\
& B_V(v_1,v_2)=B_V(v_1,v_3)=B_V(v_2,v_2)=B_V(v_3,v_3)=0.
\endaligned
$$
In particular $\{v_2,v_3\}$ is a hyperbolic pair for $B_V$.
Let $D:V \to V$ be the linear map defined by 
$D(v_1)=0$, $D(v_2)=v_3$ and $D(v_3)=-v_2$.
In the vector space $\g=\F\,D \oplus V \oplus \F\,c$ 
consider the quadratic Lie algebra structure of 
the double extension of $V$ by $(D,\omega)$ as in \cite{MCGS}.
A straightforward calculation shows that,
$$
C(\g)=\F\,v_1 \oplus \F\,c\quad \text{and}\quad \g^1=\Span_{\F}\{v_2,v_3\} \oplus \F\,c.
$$
Now, $\g^1$ is Abelian because $\{v_2,v_3\}$ is a hyperbolic pair.
Clearly, $D$ is invertible in the subspace $\Span_{\F}\{v_2,v_3\}$, 
which implies that $\g$ is a non-nilpotent Lie algebra. 
Then, it is not difficult to prove that,
$$
\ide(\g)=\F v_1 \oplus \F\,c,\quad \mbox{ and }\quad \j(\g)=V \oplus \F\,c.
$$
We refer the reader to \cite{MCGS} for
a more general and exhaustive study of this type of quadratic Lie algebras.
}
\end{Example}

\smallskip
\section{Classification of Lie Algebras $\g=\h\oplus\a\oplus\h^*$
where $\h$ is the 3-dimensional Heisenberg Algebra and $\a=\F^r$}

\smallskip
Start by observing that a nilpotent Lie algebra
is a Lie algebra that has at least one Abelian descending central ideal. 
In this section we aim to classify, up to isomorphism, all the nilpotent
Lie algebras $\g$ for which
$\h=\g/\j(\g)$ is the $3$-dimensional Heisenberg Lie algebra and 
$\j(\g)/\ide(\g)\simeq\Bbb F^r$, with $r \geq 3$.
Said classification will illustrate how 
{\bf Cor. \ref{corolario ext abelianas}} and {\bf Thm. \ref{nilp cuad}} work.

\smallskip
Let $\h=\Span_{\F}\{x_1,x_2,x_3\}$ be the 3-dimensional Heisenberg Lie algebra,
with  $C(\h)=\Span_{\F}\{x_3\}$, and $[x_1,x_2]=x_3$. 
Let $\a=\F^r$ be the trivial $\h$-module with $r \geq 3$.
Let $\rho=\ad^{*}:\h \to \gl(\h^{*})$ be the coadjoint representation of $\h$.
Let $\varphi$ be a $1$ cocycle in $C(\h;\Hom(\F^r,\h^*))$
associated to the representation $\overline{\ad^{*}}:\h \to \gl(\Hom(\F^r,\h^{*}))$ 
given by $T\mapsto \overline{\ad^{*}}(x)(T)=\ad^*(x)\,\circ\,T$.
Let $\h(\lambda,\mu,\varphi)$ be the Lie algebra defined on 
$\g=\h\oplus\F^r\oplus\h^*$ by the data 
$(\lambda,\mu,\varphi)$ as in {\bf Cor. \ref{corolario ext abelianas}} 
with $\rho=\ad^*$ fixed, $\ide(\g)=\h^*$ and $\j(\g)=\F^r\oplus\h^*$.

\smallskip
Let $\{v_1,\cdots,v_r\}$ be a basis for $\F^r$ and let 
$\{\omega^1,\cdots,\omega^r\}$ be its dual basis.
Let $\{\theta^1,\theta^2,\theta^3\}$ be the basis of $\h^*$ dual to $\{x_1,x_2,x_3\}$. 
Since $\varphi(x) \in \Hom_{\F}(\F^3,\h^{*})\simeq\h^{*} \otimes (\F^3)^{*}$,
for all $x\in \h$, we may write,
$$
\varphi(x_j)=\sum_{i=1}^3\sum_{\ell=1}^r\,{\varphi^j}_{i \ell}\,\theta^i
\otimes \omega^{\ell}, \quad 1 \leq j \leq 3.
$$
Thus, each $\varphi(x_j):\F^r \to \h^{*}$
gets associated to the $3 \times r$ matrix,
$$
\varphi(x_j)\ \leftrightarrow\ 
\varphi^j=\begin{pmatrix}
{\varphi^j}_{11} & \varphi^j_{12} & \cdots & {\varphi^j}_{1r} \\
{\varphi^j}_{21} & \varphi^j_{22} & \cdots & {\varphi^j}_{2r} \\
{\varphi^j}_{31} & \varphi^j_{32} & \cdots & {\varphi^j}_{3r}
\end{pmatrix},
\quad 1 \leq j \leq 3.
$$
The Lie bracket of the Heisenberg Lie algebra $\h$, gives the following relations:
$$
\aligned
\ad^{*}(x_1)(\theta^1)=\ad^{*}(x_1)(\theta^2)&=0,\\
\ad^{*}(x_2)(\theta^1)=\ad^{*}(x_2)(\theta^2)&=0,
\endaligned
$$
$$
\ad^{*}(x_1)(\theta^3)=-\theta^2,
\quad 
\ad^{*}(x_2)(\theta^3)=\theta^1, 
\quad 
\ad^{*}(x_3)=0.
$$
Since, $\varphi \in\Hom_{\F}(\h,\Hom_{\F}(\F^3,\h^{*}))$ is a $1$-cocycle, it satisfies,
\begin{equation}\label{condicion de 1 cociclo}
\varphi([x_i,x_j])=\ad^{*}(x_i)\circ\,\varphi(x_j)-\ad^{*}(x_j)\circ\,\varphi(x_i),
\quad 1\le i,j\le 3.
\end{equation}
A straightforward computation shows that \eqref{condicion de 1 cociclo} yields,
$$
\varphi^3=
-\begin{pmatrix}
{\varphi^1}_{31} & \cdots & {\varphi^1}_{3r} \\
{\varphi^2}_{31} & \cdots & {\varphi^2}_{3r} \\
0 & \cdots & 0
\end{pmatrix};
$$
that is,
$$
{\varphi^3}_{1\ell}= -{\varphi^1}_{3\ell},\quad
{\varphi^3}_{2\ell}= -{\varphi^2}_{3\ell},\quad
{\varphi^3}_{3\ell}=0,\quad 1 \leq \ell \leq r.
$$
Thus, the entries of $\varphi^3$ are
completely determined by the entries of
$\varphi^1$ and $\varphi^2$.
We now write the $2$-cocycle $\lambda:\h \times \h \to \F^3$, in the form,
$$
\aligned
\lambda(x_1,x_2)&=\lambda_{13}\,v_1+\lambda_{23}\,v_2+\cdots+\lambda_{r3}\,v_r,\\
\lambda(x_2,x_3)&=\lambda_{11}\,v_1+\lambda_{21}\,v_2+\cdots+\lambda_{r1}\,v_r,\\
\lambda(x_3,x_1)&=\lambda_{12}\,v_1+\lambda_{22}\,v_2+\cdots+\lambda_{r2}\,v_r,
\endaligned
$$
with $\lambda_{\ell j} \in \F$; $1\le \ell\le r$, $1\le j \le 3$. 
Thus, $\lambda$ gets identified with the $r\times 3$ matrix,
\begin{equation}\label{matriz-lambda}
\lambda \ \leftrightarrow\ 
\begin{pmatrix}
\lambda_{11}&\lambda_{12}&\lambda_{13}\\
\lambda_{21}&\lambda_{22}&\lambda_{23} \\ 
\vdots & \vdots & \vdots \\
\lambda_{r1} & \lambda_{r2} & \lambda_{r3}
\end{pmatrix}.
\end{equation}
Similarly, $\mu:\h \times \h \to \h^{*}$ can be written in the form,
$$
\aligned
\mu(x_1,x_2)&=\mu_{13}\,\theta^1+\mu_{23}\,\theta^2+\mu_{33}\,\theta^3,\\
\mu(x_2,x_3)&=\mu_{11}\,\theta^1+\mu_{21}\,\theta^2+\mu_{31}\,\theta^3,\\
\mu(x_3,x_1)&=\mu_{12}\,\theta^1+\mu_{22}\,\theta^2+\mu_{32}\,\theta^3,
\endaligned
$$
so that $\mu$ gets identified with the $3 \times 3$ matrix,
\begin{equation}\label{matriz-mu}
\mu \ \leftrightarrow\ 
\begin{pmatrix}
\mu_{11}&\mu_{12}&\mu_{13}\\
\mu_{21}&\mu_{22}&\mu_{23} \\ 
\mu_{31} & \mu_{32} & \mu_{33}
\end{pmatrix}.
\end{equation}
Observe that $\d\lambda = 0$ is trivially satisfied for the $3$-dimensional
Heisenberg Lie algebra $\h$. Indeed, 
since $[x_1,x_3]=[x_2,x_3]=0$ and $[x_1,x_2]=x_3$,
it follows that,
$$
\lambda([x_1,x_2],x_3)+\lambda([x_2,x_3],x_1)
+\lambda([x_3,x_1],x_2)=\lambda(x_3,x_3)=0.
$$
On the other hand, a straightforward computation shows that
$\d\mu+e_{\varphi}(\lambda)=0$, if and only if, 
\begin{eqnarray}
\label{condicion1}(\varphi^1 \lambda)_{11}
+(\varphi^2 \lambda)_{12}-(\varphi^1 \lambda)_{33}+\mu_{32}=0,\\
\label{condicion2}(\varphi^1 \lambda)_{21}+(\varphi^2 \lambda)_{22}
-(\varphi^2 \lambda)_{33}-\mu_{31}=0,\\
\label{condicion3}(\varphi^1 \lambda)_{31}+(\varphi^2 \lambda)_{32}=0.
\end{eqnarray}
These equations must retain their form after acting on them 
with elements of the group
$G\subset\Aut{\h}\times\GL(\a\oplus\h^*)$
consisting of those $\Psi$'s
as in {\bf Cor. \ref{equivariancia de ecuaciones}}
that preserve the isomorphism class of $\h(\lambda,\mu,\varphi)$.
In particular, we look for group elements $(g,\sigma)\in G$ with $g\in\Aut(\h)$ 
and,
\begin{equation}\label{pues-sigma}
\sigma=
\begin{pmatrix}
h & 0 \\
T & k
\end{pmatrix}\in  \GL(\F^{r} \oplus \h^{*}),
\end{equation}
where, $h \in \GL(\F^r)$, $k \in \GL(\h^{*})$ and $T\in\Hom_{\F}(\F^r,\h^*)$.
For any $g\in\GL(\h)$, write, $g(x_j)=\sum_{i=1}^3\,g_{ij}\,x_i$.
It is easy to see that $g\in\Aut(\h)$, if and only if its matrix has the form,
\begin{equation}\label{forma automorf g}
g=\begin{pmatrix}
g_{11} & g_{12} & 0 \\
g_{21} & g_{22} & 0 \\
g_{31} & g_{32} & g_{33}
\end{pmatrix}\!, \quad \text{with,}\quad g_{33}=g_{11}g_{22}-g_{12}g_{21} \neq 0.
\end{equation}
Now recall that a necessary condition
to preserve the isomorphism class of $\h(\lambda,\mu,\varphi)$
is that the representations
$R$ and $R^\prime$ be in the same $G$-orbit. Since,
$$
R(x)=\begin{pmatrix} 0 & 0 \\ \varphi(x) & \ad^*(x) \end{pmatrix},
\quad\forall\,x\in\h,
$$
we must have,
$$
\sigma\,\circ\,R(g^{-1}(x))\,\circ\,\sigma^{-1}=R^\prime(x),\quad\forall\,x\in\h.
$$
Thus, $\varphi$ can be changed into $\varphi^\prime$ inside $R^\prime$, as follows:
\begin{equation}\label{iso coadjunta}
{\aligned
\varphi^\prime(x) & = k\,(\,\varphi(g^{-1}(x)) - \ad^*(g^{-1}(x))\,k^{-1}\,T\,)\,h^{-1}, \\
\ad^*(x) & = k\,\ad^*(g^{-1}(x))\,k^{-1}.
\endaligned}
\end{equation}
It is straightforward to see that a linear map $k \in \GL_{\F}(\h^{*})$ 
satisfies the second condition in \eqref{iso coadjunta} if and only if:
\begin{equation}\label{forma de k}
k=
\left(
\begin{matrix}
k_{33}\,g_{33}\,{\begin{pmatrix} g_{11} & g_{12} \\ g_{21} & g_{22}\end{pmatrix}}^{-1} 
& {\begin{matrix} k_{13} \\ k_{23} \end{matrix}} \\
\ \ \ \ \ \ {\begin{matrix} 0 & \ \ 0 \end{matrix}} & k_{33}
\end{matrix}
\right)
,\quad
g_{33}=g_{11}g_{22}-g_{12}g_{21}.
\end{equation}
Also recall from {\bf Cor. \ref{equivariancia de ecuaciones}}, 
how $\lambda$ and $\mu$ are transformed via,
$\Lambda(\,\cdot\,)\mapsto \Lambda^\prime(\,\cdot\,)=\Phi(\gamma)\Lambda -\d(*)$, 
where, $\gamma=(g,\sigma)\in G$, with $g\in\Aut\h$ and $\sigma$ as in \eqref{pues-sigma}:
$$
\left(\Phi(\gamma)
(\Lambda)\right)(\,\cdot\,,\,\cdot\,)=
\Phi(\gamma)
\begin{pmatrix}
\lambda(\,\cdot\,,\,\cdot\,)
\\ 
\mu(\,\cdot\,,\,\cdot\,)
\end{pmatrix}
=
\sigma
\begin{pmatrix}
\lambda(g^{-1}(\,\cdot\,),g^{-1}(\,\cdot\,)) \\ \mu(g^{-1}(\,\cdot\,),g^{-1}(\,\cdot\,))
\end{pmatrix}
$$
$$
=
\begin{pmatrix}
h(\lambda(g^{-1}(\,\cdot\,),g^{-1}(\,\cdot\,)))\\
k\,(\mu(g^{-1}(\,\cdot\,),g^{-1}(\,\cdot\,)))+ T\,\circ\,\lambda(g^{-1}(\,\cdot\,),g^{-1}(\,\cdot\,))
\end{pmatrix}\,.
$$
We shall now determine a representative set of canonical forms
for the maps $\lambda$, under the left action,
$\lambda\mapsto(g,h).\lambda$, with 
$(g,h)\in \Aut(\h)\times\GL(\F^r)$,
$$
((g,h).\lambda)(\,\cdot\,,\,\cdot\,)=h(\lambda(g^{-1}(\,\cdot\,),g^{-1}(\,\cdot\,))).
$$
In terms of the corresponding matrices, it is easy to see that,
$$
(g,h).\lambda\ \leftrightarrow\ \displaystyle{\frac{1}{\det g}}\,h\,\lambda\,g^t, 
\quad\text{and}\quad (g,k).\mu \ \leftrightarrow\ \displaystyle{\frac{1}{\det g}}\,k\,\mu\,g^t.
$$

\smallskip
\begin{claim}{\sl
There exists an isomorphism 
$\Psi: \h(\lambda,\mu,\varphi)\to\h(\lambda^{\prime},\mu^{\prime},\varphi^{\prime})$, 
bringing the matrix of $\lambda^{\prime}$ into the form,
\begin{equation}\label{matriz forma de h}
\begin{pmatrix}
\lambda^{\prime}_{11} & \lambda^{\prime}_{12} & \lambda^{\prime}_{13} \\
0 & \lambda^{\prime}_{22} & \lambda^{\prime}_{23} \\
0 & 0 & \lambda^{\prime}_{33}\\
\vdots & \vdots & \vdots \\
0 & 0 & \lambda^{\prime}_{r3}
\end{pmatrix}.
\end{equation}
}
\end{claim}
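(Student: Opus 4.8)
The plan is to read the claim directly off the left action
$\lambda\mapsto \frac{1}{\det g}\,h\,\lambda\,g^{t}$ of $(g,h)\in\Aut(\h)\times\GL(\F^{r})$ on the matrix \eqref{matriz-lambda}, combined with \textbf{Cor. \ref{equivariancia de ecuaciones}}, which guarantees that any admissible choice of isomorphism data produces a genuine Lie algebra isomorphism $\Psi:\h(\lambda,\mu,\varphi)\to\h(\lambda^{\prime},\mu^{\prime},\varphi^{\prime})$. The crucial observation is that the block $h\in\GL(\F^{r})$ is entirely unconstrained: fixing the representation $\rho=\ad^{*}_{\h}$ forces only the relation $\Phi(\gamma)\rho=\rho$, i.e.\ the shape \eqref{forma de k} of $k$ in terms of $g$, and imposes no condition at all on $h$. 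I would therefore specialize to $g=\Id_{\h}$, $k=\Id_{\h^{*}}$, $T=0$ and $\Theta=0$; this is admissible, since with $g=\Id$ the matrix \eqref{forma de k} permits $k=\Id$, and it reduces the action on $\lambda$ to $\lambda\mapsto h\lambda$, that is, to an arbitrary invertible row operation on the $r\times 3$ matrix.

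Next I would construct the required $h$ by Gaussian elimination applied only to the first two columns. If the first column is nonzero, a row permutation brings a nonzero entry to position $(1,1)$, and subtracting multiples of the first row from the others annihilates $\lambda_{21},\dots,\lambda_{r1}$, leaving the first column supported on row $1$. Working now inside rows $2,\dots,r$ (whose first-column entries are already zero and so are undisturbed by operations among these rows), the same elimination on the second column moves a nonzero entry of $(\lambda_{22},\dots,\lambda_{r2})$ to position $(2,2)$ and clears $\lambda_{32},\dots,\lambda_{r2}$. The product of these elementary matrices is the desired $h\in\GL(\F^{r})$, and $\lambda^{\prime}=h\lambda$ then has exactly the shape \eqref{matriz forma de h}: the first column is supported on row $1$, the second on rows $1$ and $2$, while the third column is left untouched and hence arbitrary. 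In particular no column operations are needed, so the nontrivial part of $\Aut(\h)$ never enters and taking $g=\Id$ really does suffice.

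Finally, I would invoke \textbf{Cor. \ref{equivariancia de ecuaciones}} with the data $(g,h,k,T,\Theta)=(\Id_{\h},h,\Id_{\h^{*}},0,0)$ to conclude that the associated $\Psi$ is a Lie algebra isomorphism onto $\h(\lambda^{\prime},\mu^{\prime},\varphi^{\prime})$, where $\lambda^{\prime}=h\lambda$ is in the claimed form, $\mu^{\prime}=\mu$, and $\varphi^{\prime}(x)=\varphi(x)\circ h^{-1}$. The pair $(\lambda^{\prime},\mu^{\prime})$ lies in $\Ker\D^{\prime}_{\varphi^{\prime}}$ automatically: by the equivariance $\D^{\prime}_{\varphi^{\prime}}\circ\Phi(\gamma)=\Phi(\gamma)\circ\D_{\varphi}$ recorded in \textbf{Cor. \ref{equivariancia de ecuaciones}}, the action $\Phi(\gamma)$ carries $\Ker\D_{\varphi}$ into $\Ker\D^{\prime}_{\varphi^{\prime}}$, and $(\lambda,\mu)\in\Ker\D_{\varphi}$ by hypothesis. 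I do not expect a genuine obstacle here; the only delicate point is the bookkeeping that confirms $h$ is free and that $g=\Id$ is compatible with $k=\Id$ in \eqref{forma de k}. Once that is checked, the claim reduces entirely to elementary row reduction of the matrix of $\lambda$.
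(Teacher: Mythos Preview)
Your proposal is correct and follows essentially the same approach as the paper: both arguments fix $g=\Id_{\h}$ (the paper introduces a general $g$ but never uses it) and exploit only the freedom in $h\in\GL(\F^{r})$ to perform row operations on the matrix of $\lambda$, bringing its first two columns into the required echelon shape. Your version via Gaussian elimination is just a computational rephrasing of the paper's basis-choice argument (mapping the column vectors $w_1,w_2$ to standard basis vectors), and your explicit verification that the data $(g,h,k,T,\Theta)=(\Id,h,\Id,0,0)$ is admissible in \eqref{forma de k} and that $(\lambda',\mu')\in\Ker\D'_{\varphi'}$ fills in details the paper leaves implicit.
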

\begin{proof}
Let  $g\in\Aut(\h)$ be
as in \eqref{forma automorf g}.
Write $h(v_\ell)=\sum_{k=1}^r\,h_{k\ell}\,v_k$, for $h \in \GL(\F^r)$ and let
$w_j=\sum_{\ell=1}^r\,\lambda_{\ell j}v_\ell$, with $1\le j\le 3$ and $1 \leq \ell \leq r$.
The $j$-th column vector of the matrix $h\,\lambda$ is, 
$$
\sum_{k=1}^r\,(h\,\lambda)_{kj}\,v_k
=\sum_{k,\ell=1}^r\,h_{k\ell}\,\lambda_{\ell j}\,v_k
=h\left(\sum_{\ell=1}^r\,\lambda_{\ell j}\,v_{\ell}\right)=h(w_j).
$$
So, we may symbolically write the matrix $h\,\lambda$ in the form
$$
h \, \lambda=\left(\,\,
h(w_1)\,\mid\, h(w_2)\,\mid\, h(w_3)\,\,\right),
\quad\text{where,}\quad 
w_j=\sum_{\ell=1}^r\,\lambda_{\ell j}v_\ell.
$$
If $\{w_1,w_2\}$ is a linearly independent set, we may choose $h\in\GL(\F^r)$
in such a way that $h(w_1)=v_1$ and $h(w_2)=v_2$. That is,
$$
h\,\lambda=
\begin{pmatrix}
1 & 0 & \ast \\
0 & 1 & \ast \\
\vdots & \vdots & \vdots \\
0 & 0 & \ast
\end{pmatrix}.
$$
On the other hand, if $\{w_1,w_2\}$ is a
linearly dependent set, say $w_2=\alpha\,w_1$
with $w_1\ne 0$, we may choose $h\in\GL(\F^r)$
in such a way that,
$$
h\,\lambda=
\begin{pmatrix}
1 & \alpha & \ast \\
0 & 0 & \ast \\
\vdots & \vdots & \vdots \\
0 & 0 & \ast
\end{pmatrix}.
$$
If $w_1= 0$, but $w_2\ne 0$, choose $h\in\GL(\F^r)$ in such a way that,
$$
h\,\lambda=
\begin{pmatrix}
0 & 1 & \ast \\
0 & 0 & \ast \\
\vdots & \vdots & \vdots \\
0 & 0 & \ast
\end{pmatrix}.
$$
In any case, this analysis shows that there is a choice of
$h\in\GL(\F^r)$ that brings the matrix of 
$h\,\lambda$ into of the form \eqref{matriz forma de h}.
Therefore, we might as well assume, right from the start, 
that the matrix of $\lambda$ is given by \eqref{matriz forma de h}.
\end{proof}

\smallskip
\begin{claim}{\sl
There exists an isomorphism 
$\Psi :\h(\lambda,\mu,\varphi)\to\h(\lambda^{\prime},\mu^{\prime},\varphi^{\prime})$,
that brings the matrix of $\lambda^{\prime}$ into the form,
\begin{equation}\label{forma canonica general}
\begin{pmatrix}
\lambda^{\prime}_{11} & 0 & 0 \\
0 & \lambda^{\prime}_{22} & 0 \\
\vdots & \vdots & \vdots \\ 
0 & 0 & 0
\end{pmatrix},
\qquad \lambda^{\prime}_{11},\ \lambda^{\prime}_{22} \in \F.
\end{equation}
}
\end{claim}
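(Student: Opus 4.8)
The plan is to push the normalization of the matrix of $\lambda$ one step further by combining two independent moves already available: the coboundary freedom used in the proof of {\bf Cor. \ref{criterio cohomologico}}, and the two-sided equivalence induced by the group action $\lambda\mapsto(g,h).\lambda\leftrightarrow\frac{1}{\det g}\,h\,\lambda\,g^t$. First I would record what a coboundary can achieve. Choosing $g=\Id_\h$, $h=\Id_\a$, $T=0$ and $k=\Id_\ide$ in {\bf Cor. \ref{equivariancia de ecuaciones}} replaces $\lambda$ by $\lambda-\d\tau$ for an arbitrary $\tau\in\Hom_\F(\h,\a)$ (the accompanying change in $\mu$ is immaterial here, since the statement only concerns $\lambda^\prime$). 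As $\a$ is a trivial $\h$-module, $(\d\tau)(x,y)=-\tau([x,y]_\h)$, so the Heisenberg relations $[x_1,x_2]=x_3$, $[x_2,x_3]=[x_3,x_1]=0$ give $(\d\tau)(x_1,x_2)=-\tau(x_3)$ and $(\d\tau)(x_2,x_3)=(\d\tau)(x_3,x_1)=0$. Since $\tau(x_3)$ ranges over all of $\a=\F^r$, this sets the third column of the matrix of $\lambda$ (the one recording $\lambda(x_1,x_2)$) equal to zero while leaving the first two columns intact.

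Starting from the triangular form \eqref{matriz forma de h}, this coboundary step produces
$$
\begin{pmatrix}
\lambda_{11} & \lambda_{12} & 0\\
0 & \lambda_{22} & 0\\
0 & 0 & 0\\
\vdots & \vdots & \vdots\\
0 & 0 & 0
\end{pmatrix},
$$
so it remains to diagonalize the $r\times 2$ block $M$ formed by the first two columns while keeping the third column zero. Next I would isolate the effective action of $(g,h)$ on $M$. Writing $g\in\Aut(\h)$ as in \eqref{forma automorf g} and computing $\lambda g^t$ column by column, the new first two columns are $g_{11}c_1+g_{12}c_2$ and $g_{21}c_1+g_{22}c_2$ (the entries $g_{13}=g_{23}=0$ are what suppress the third column of $\lambda$), whereas the new third column is $g_{31}c_1+g_{32}c_2+g_{33}c_3$, where $c_1,c_2,c_3$ denote the columns of $\lambda$. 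Restricting to automorphisms with $g_{31}=g_{32}=0$ thus preserves the condition $c_3=0$ achieved above, and the block transforms as $M\mapsto \frac{1}{\det g}\,h\,M\,g_0^t$, where $g_0=\left(\begin{smallmatrix}g_{11}&g_{12}\\ g_{21}&g_{22}\end{smallmatrix}\right)\in\GL_2(\F)$ and $h\in\GL(\F^r)$ are arbitrary and the scalar $\frac{1}{\det g}$ is nonzero.

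Since $h$ and $g_0$ range freely over $\GL(\F^r)$ and $\GL_2(\F)$, the map $M\mapsto\frac{1}{\det g}\,h\,M\,g_0^t$ is ordinary left-right equivalence of the $r\times2$ matrix $M$ up to a harmless nonzero scalar; hence $M$ can be carried to its rank normal form, which is diagonal. Padding with the zero rows below and the already-vanishing third column yields exactly \eqref{forma canonica general}, with $\lambda_{11}^\prime,\lambda_{22}^\prime\in\F$ (one or both possibly zero, according to $\operatorname{rank}M$). The required $\Psi$ is the composite of the coboundary isomorphism and the $(g,h)$-isomorphism, which is again an isomorphism by {\bf Cor. \ref{equivariancia de ecuaciones}}. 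The one point demanding care---the main, and rather mild, obstacle---is the bookkeeping linking the two moves: one must check that the single normalization of the third column by a coboundary survives the subsequent $\GL$-reduction, and this is guaranteed precisely by the choice $g_{31}=g_{32}=0$; everything else is the standard reduction of a rectangular matrix to rank normal form.
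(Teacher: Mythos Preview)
Your proof is correct and uses the same two tools as the paper---the coboundary freedom and the $(g,h)$-action---but applies them in the opposite order. The paper first multiplies by a $g\in\Aut(\h)$ with $g_{12}=0$ to kill the single off-diagonal entry $\lambda_{12}$ in the triangular form \eqref{matriz forma de h}, and only then invokes a coboundary $\d\tau$ to annihilate the third column. You instead clear the third column first via $\d\tau$ and then observe that the residual $r\times 2$ block is subject to the full two-sided $\GL_r\times\GL_2$ equivalence (up to the harmless scalar $(\det g)^{-1}$), so it goes directly to rank normal form without further case analysis.

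Your ordering is slightly more robust: the paper's instruction ``choose $g_{22}$ so as to make $\lambda'_{12}=0$'' tacitly needs $\lambda_{11}\neq 0$, since with $g_{12}=0$ one has $\lambda'_{12}=(\det g)^{-1}(\lambda_{11}g_{21}+\lambda_{12}g_{22})$ and $g_{22}=0$ would force $g_{33}=g_{11}g_{22}=0$. Your rank-normal-form reduction sidesteps this edge case entirely. Both routes land on the same diagonal shape \eqref{forma canonica general}.
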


\smallskip
\begin{proof}
Choose $g\in \Aut(\h)$ to be upper triangular, so as to have
a lower triangular $g^t$. Then,
$\lambda^\prime=(\det g)^{-1}\lambda\,g^t$, and,
$$
\aligned
{
\begin{pmatrix}
\lambda^{\prime}_{11} & \lambda^{\prime}_{12} & \lambda^{\prime}_{13} \\
0 & \lambda^{\prime}_{22} & \lambda^{\prime}_{23} \\
0 & 0 & \lambda^{\prime}_{33}\\
\vdots & \vdots & \vdots \\
0 & 0 & \lambda^{\prime}_{r3} 
\end{pmatrix}
}
& =
\displaystyle{\frac{1}{\det g}}\,
{\begin{pmatrix}
\lambda_{11} & \lambda_{12} & \lambda_{13}\\
0 & \lambda_{22} & \lambda_{23} \\
0 & 0 & \lambda_{33}\\
\vdots & \vdots & \vdots \\
0 & 0 & \lambda_{r3}
\end{pmatrix}
}{
\begin{pmatrix}
g_{11} & g_{21} & g_{31}\\
0 & g_{22} & g_{32} \\
0 & 0 & g_{33}
\end{pmatrix}
}
\\
& = \displaystyle{\frac{1}{\det g}}\,
{
\begin{pmatrix}
\lambda_{11}g_{11} & \lambda_{11}g_{21}+\lambda_{12}g_{22}  & \,\,\ast\,\, \\
0 & \lambda_{22}g_{22} &  \,\,\ast\,\, \\
0 & 0 & \ast \\
\vdots & \vdots & \vdots \\
0 & 0 & \ast
\end{pmatrix}.
}
\endaligned
$$
If $\lambda_{12}\ne 0$, we may choose $g_{22}$ so as to make 
$\lambda^{\prime}_{12}=0$. Thus, we may also assume from the start that,
$$
\lambda =
\begin{pmatrix}
{\lambda}_{11} & 0 & \,\,\lambda_{13}\, \\
0 & {\lambda}_{22} & \,\,\lambda_{23}\, \\
0 & 0 & {\lambda}_{33}\\
\vdots & \vdots & \vdots \\
0 & 0 & \lambda_{r3}
\end{pmatrix}
$$
Now use the additional freedom for transforming the
$2$-cocycle $\lambda$ by means of a coboundary 
as in {\bf Cor. \ref{equivariancia de ecuaciones}} and \textbf{Cor. \ref{criterio cohomologico}}; that is, $\lambda \mapsto \lambda^\prime= \lambda+\d\tau$
for a $1$-cochain so as to obtain $\d\tau:\h \times \h \to\F^r$.
In particular, $\tau$ can be chosen in such a way that,
$$
\d\tau = \begin{pmatrix}
0 & 0 & -\lambda_{13} \\
0 & 0 & -\lambda_{23} \\ 
0 & 0 & -\lambda_{33}\\
\vdots & \vdots & \vdots \\
0 & 0 & -\lambda_{r3}
\end{pmatrix}.
$$
Therefore, $\lambda^\prime=\lambda+d\tau$ takes the form 
\ref{forma canonica general} as claimed.
\end{proof} 
\begin{Lemma}\label{formas canonicas}{\sl
The Lie algebra $\h(\lambda,\mu,\varphi)$ is isomorphic to 
$\h(\lambda^{\prime},\mu^{\prime},\varphi^{\prime})$, where the matrix of 
$\lambda^{\prime}$ can achieve one, and only one, of the following
canonical forms:
$$
\begin{pmatrix}
1 & 0 & 0 \\
0 & 1 & 0 \\
0 & 0 & 1 \\
0 & 0 & 0 \\
\vdots & \vdots & \vdots \\
0 & 0 & 0
\end{pmatrix},
\qquad 
\begin{pmatrix}
1 & 0 & 0 \\
0 & 0 & 0 \\
\vdots & \vdots & \vdots \\
0 & 0 & 0
\end{pmatrix},
\qquad
\begin{pmatrix}
0 & 0 & 0 \\
0 & 0 & 0 \\
\vdots & \vdots & \vdots \\
0 & 0 & 0
\end{pmatrix}.
$$
}
\end{Lemma}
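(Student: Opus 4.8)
The plan is to start from the normal form already secured by the two preceding Claims: after applying suitable $g\in\Aut(\h)$, $h\in\GL(\F^r)$ and a coboundary $\d\tau$, we may assume that the matrix of $\lambda$ is the one displayed in \eqref{forma canonica general}, that is, it is supported on the two diagonal entries $\lambda_{11},\lambda_{22}\in\F$, with vanishing third column. What remains is to normalise the two scalars $\lambda_{11},\lambda_{22}$ and to show that the resulting shapes are genuinely inequivalent. First I would rescale: taking $g=\operatorname{Id}$ (so that the factor $1/\det g$ equals $1$) and $h\in\GL(\F^r)$ diagonal with the appropriate reciprocals on its first two entries, the transformation $\lambda\mapsto(\det g)^{-1}h\,\lambda\,g^{t}$ of {\bf Cor. \ref{equivariancia de ecuaciones}} turns every non-zero $\lambda_{ii}$ into $1$. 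This produces three shapes according to whether both, exactly one, or neither of the diagonal entries survives, i.e. according as the rank is $2$, $1$ or $0$; these are the three displayed representatives.

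Next I would remove the apparent asymmetry between the two rank-one possibilities $(\lambda_{11}\neq 0,\lambda_{22}=0)$ and $(\lambda_{11}=0,\lambda_{22}\neq 0)$. For this I would use the automorphism $g\in\Aut(\h)$ interchanging $x_1$ and $x_2$; by \eqref{forma automorf g} this forces $g_{33}=g_{11}g_{22}-g_{12}g_{21}=-1$, so such a $g$ indeed lies in $\Aut(\h)$, and its effect on $\lambda$ is to swap the first two columns. Combined with the row swap $v_1\leftrightarrow v_2$ supplied by $h$, it interchanges $\lambda_{11}$ and $\lambda_{22}$. Hence the two rank-one matrices are isomorphic, and every $\lambda$ is equivalent to at least one of the three displayed representatives.

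For the ``one and only one'' clause I would exhibit a complete invariant separating the three orbits. The natural candidate is $s:=\dim_{\F}\Span_{\F}\{\lambda(x_2,x_3),\lambda(x_3,x_1)\}$, the rank of the $r\times 2$ block formed by the first two columns of the matrix of $\lambda$. This number is preserved by the entire group action: the left factor $h\in\GL(\F^r)$ is an isomorphism and preserves dimensions of spans; by \eqref{forma automorf g} the upper-left $2\times 2$ block of $g$ is invertible and $g_{13}=g_{23}=0$, so the action of $g$ replaces $\lambda(x_2,x_3)$ and $\lambda(x_3,x_1)$ by invertible linear combinations of themselves; the scalar $1/\det g=g_{33}^{-2}$ is a non-zero multiple; and the only coboundary available, $\d\tau$, affects exclusively the third column $\lambda(x_1,x_2)$, because $[x_2,x_3]=[x_3,x_1]=0$ while $[x_1,x_2]=x_3$ in $\h$. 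Since the three displayed representatives have $s=2,1,0$ respectively, they are pairwise non-isomorphic, and the matching is unique.

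The hard part is precisely this uniqueness step: one must isolate an invariant insensitive both to the coboundary freedom that lets us discard the third column and to the $\Aut(\h)$-action mixing the first two columns. The feature that makes it work is special to the Heisenberg algebra, namely that $[\h,\h]=C(\h)=\F x_3$ is one-dimensional, so the coboundary direction is confined to the single column $\lambda(x_1,x_2)$, while the remaining two columns are acted on by an arbitrary invertible $2\times 2$ matrix; this is exactly what reduces the classification of $\lambda$ to the rank $s\in\{0,1,2\}$ of that $r\times 2$ block.
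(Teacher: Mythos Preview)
Your argument has a small but genuine gap in the existence half. After rescaling the two diagonal entries of \eqref{forma canonica general} you obtain, in the rank-$2$ case, the matrix with diagonal $(1,1,0)$ --- not the displayed representative, which has diagonal $(1,1,1)$. One more coboundary is needed: take $\tau':\h\to\F^r$ with $\tau'(x_3)=v_3$ and $\tau'(x_1)=\tau'(x_2)=0$; then $\d\tau'$ contributes a single $1$ in the $(3,3)$ entry and leaves the first two columns untouched, bringing $(1,1,0)$ to $(1,1,1)$. Without this step your sentence ``these are the three displayed representatives'' is not justified. (Your own invariant $s$ already confirms that $(1,1,0)$ and $(1,1,1)$ must lie in the same class, since both have $s=2$; what is missing is the explicit move between them.)

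Your uniqueness argument, on the other hand, is different from the paper's and cleaner. The paper distinguishes the first two representatives by a direct matrix computation: it assumes an isomorphism exists, writes out the equation $(\det g)^{-1}h\bigl(\begin{smallmatrix}\operatorname{Id}_{3\times 3}\\0\end{smallmatrix}\bigr)g^t+\d\tau''=\bigl(\begin{smallmatrix}1&0&0\\0&0&0\\ \vdots&\vdots&\vdots\end{smallmatrix}\bigr)$, and shows that the resulting constraints force $h$ to be singular. Your approach replaces this calculation by the invariant $s=\dim_{\F}\operatorname{Span}_{\F}\{\lambda(x_2,x_3),\lambda(x_3,x_1)\}$, checking that $h$, the invertible upper-left $2\times 2$ block of $g$, the scalar $(\det g)^{-1}$, and the coboundary $\d\tau$ all preserve it --- the last because $[x_2,x_3]=[x_3,x_1]=0$ in $\h$. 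This is a genuine simplification: it handles all three pairs at once, makes transparent why the coboundary freedom cannot interfere, and isolates exactly the structural feature of the Heisenberg algebra (one-dimensional centre equal to the derived subalgebra) that drives the classification.
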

\begin{proof}
Once $\lambda^\prime$ is brought into the form \eqref{forma canonica general}
it can be further multiplied on the left by a diagonal invertible matrix $h$ and 
the ordered pair $(\lambda_{11},\lambda_{22})$, formed by the diagonal entries, 
can be assumed to be either $(1,1)$, $(1,0)$, $(0,1)$ or $(0,0)$. 
As a matter of fact, the cases $(1,0)$ and $(0,1)$ define the same isomorphism
class for a given $\h(\lambda,\mu,\varphi)$ because the transformation 
$\lambda\mapsto h \lambda g^t=\lambda^\prime$ (with $\det g=1$) yields,
$$
\begin{pmatrix}
0 & 1 & 0 & 0  \\
1 & 0 & 0 & 0 \\
0 & 0 & 1 & 0 \\
0  & 0 & 0 & \operatorname{Id}_{r-3,r-3}
\end{pmatrix}
\begin{pmatrix}
1 & 0 & 0 \\
0 & 0 & 0 \\
0 & 0 & 0 \\
\vdots & \vdots & \vdots \\
0 & 0 & 0
\end{pmatrix}
\begin{pmatrix}
\,\,0 & 1 & 0 \\
\!-1 & 0 & 0 \\
\,\,0 & 0 & 1
\end{pmatrix}=\begin{pmatrix}
0 & 0 & 0 \\
0 & 1 & 0 \\
0 & 0 & 0 \\
\vdots & \vdots & \vdots \\
0 & 0 & 0
\end{pmatrix}.
$$
Now, the matrix \eqref{forma canonica general} with diagonal
entries $(1,1,0)$ lies in the same isomorphism class of 
$\h(\lambda^\prime,\mu^\prime,\varphi^\prime)$
where $\lambda^\prime$ has diagonal entries $(1,1,1)$, as claimed in the statement.
In fact, we simply change $\lambda$ by a coboundary coming from 
a linear map $\tau^{\prime}:\h\to \F^r$, such that $\tau^{\prime}(x_3)=v_r$,
so that,
$$
\d\tau^{\prime} \ \leftrightarrow\ 
\begin{pmatrix}
0 & 0 & 0 \\
0 & 0 & 0 \\
0 & 0 & 1 \\
\vdots & \vdots & \vdots \\
0 & 0 & 0
\end{pmatrix}.
$$
Finally, we claim that the Lie algebras $\h(\lambda,\mu,\varphi)$
and $\h(\lambda^\prime,\mu^\prime,\varphi^\prime)$ defined by the 2-cocycles 
$\lambda$ and $\lambda^{\prime}$ whose matrices are 
given by the first and second canonical forms of the statement,
cannot be isomorphic. 
If they were isomorphic, there should be invertible maps (matrices)
$h \in \GL_r(\F)$ and $g \in \Aut(\h)$ and a linear map $\tau^{\prime \prime}:\h \to \F^r$ such that:
$$
\frac{1}{\operatorname{det}\, g}h \begin{pmatrix} \operatorname{Id}_{3 \times 3} \\ 0 \end{pmatrix}g^t+\begin{pmatrix}
0 & 0 & \tau^{\prime \prime}_{13}\\
0 & 0 & \tau^{\prime \prime}_{23}\\
\vdots & \vdots & \vdots \\
0 & 0 & \tau^{\prime \prime}_{r3}
\end{pmatrix}=
\begin{pmatrix}
1 & 0 & 0 \\
0 & 0 & 0 \\
\vdots & \vdots & \vdots \\
0 & 0 & 0
\end{pmatrix}
$$
This equations lead us to solve the following:
$$
\frac{1}{\operatorname{det}\, g}
\begin{pmatrix}
h_{11} & h_{12}\\
h_{21} & h_{22}\\
\vdots & \vdots \\
h_{r1} & h_{r2}
\end{pmatrix}
\begin{pmatrix}
g_{11} & g_{21}\\
g_{12} & g_{22}
\end{pmatrix}
=\begin{pmatrix}
1 & 0 & 0 \\
0 & 0 & 0 \\
\vdots & \vdots & \vdots \\
0 & 0 & 0
\end{pmatrix}
$$
Then, $h_{21}=h_{22}=\cdots =h_{r1}=h_{r2}=0$, $h_{11}=g_{22}$ and $h_{12}=-g_{21}$. Thus, we obtain that $h$ takes the matrix form:
$$
\begin{pmatrix}
g_{22} & -g_{21} & h_{13} & \cdots & h_{1r}\\
0 & 0 &  h_{23} & \cdots & h_{2r}\\
\vdots & \vdots & \vdots & \vdots & \vdots  \\
0 & 0 & h_{r3} & \cdots & h_{rr}
\end{pmatrix},
$$
which contradicts the fact that $h$ is invertible.
\end{proof}

\smallskip
To proceed with the classification of the isomorphism classes of
the given family of Lie algebras $\h(\lambda,\mu,\varphi)$,
we shall now find a representative set of canonical forms 
for the skew-symmetric bilinear maps $\mu:\h \times \h \to \h^{*}$
to be coupled to each of the canonical forms $\lambda$ just found. 
Recall that $\h(\lambda,\mu,\varphi)$ and $\h(\lambda,\mu^\prime,\varphi^\prime)$
are in the same isomorphism class whenever,
\begin{equation}\label{act sobre mu}
\mu^\prime(\,\cdot\,,\,\cdot\,) = 
k\,(\mu(g^{-1}(\,\cdot\,),g^{-1}(\,\cdot\,)))+T(\lambda(g^{-1}(\,\cdot\,),g^{-1}(\,\cdot\,)))),
\end{equation}
where $T:\F^r \to \h^{*}$ is a linear map and the pair $(g,k) \in \Aut(\h) \times \GL(\g^{*})$, 
satisfies the second condition in \eqref{iso coadjunta}.
In this case, we shall impose first the restriction that the 
pairs $(g,h) \in \Aut(\h) \times \GL(\F^{r})$, lie in the isotropy group of
the found canonical forms for $\lambda$. For those $g$'s we then determine
$k$ as in \eqref{forma de k} and choose an appropriate $T$ so as to obtain
the desired canonical forms for the matrix,
$$
\mu^\prime = \displaystyle{\frac{1}{\det g}\,(\,k\,\mu\,g^t + T\,\lambda\,g^t\,)}.
$$
Thus, we now proceed in a case by case fashion.

\medskip
{\bf Case 1: $\lambda = \left(\begin{smallmatrix}
1 & 0 & 0 \\
0 & 1 & 0 \\
0 & 0 & 1 \\
\vdots & \vdots & \vdots \\
0 & 0 & 0 \end{smallmatrix}\right)$.}
\smallskip

Choose $g$, $h$ and $k$ to be the identity maps.
If the entries of $\mu$ are $\mu_{ij}$, choose $T:\F^r\to\h^*$ so that
$$
\aligned
T(v_j)&=-\sum_{i=1}^3\,\mu_{ij}\,\theta^i,\quad 1\le j\le 3,\\
T(v_j)&=0, \quad j \geq 4.
\endaligned
$$ 
These choices make
$\h(\lambda,\mu,\varphi)$ isomorphic to $\h(\lambda,0,\varphi^\prime)$. 
The canonical form for $\mu$ is in this case $\mu=0$.
 
\medskip
{\bf Case 2: $\lambda = \left(\begin{smallmatrix}
1 & 0 & 0 \\
0 & 0 & 0 \\
\vdots & \vdots & \vdots \\
0 & 0 & 0 \end{smallmatrix}\right)$.}
\smallskip

This time choose a linear map $\nu:\h \to \h^{*}$ and change
$\mu$ into $\mu+\d\nu$, whose matrix 
takes the form,
$$
\begin{pmatrix}
\mu_{11} & \mu_{12} & \mu_{13}+\nu_{13} \\
\mu_{21} & \mu_{22} & \mu_{23}+\nu_{23}\\
\mu_{31} & \mu_{32} & \mu_{33}+\nu_{33}
\end{pmatrix}\!,
$$
and $\nu_{13},\nu_{23},\nu_{33} \in \F$ are arbitrary. 
Thus, we may choose $\nu_{j3}=-\mu_{j3}$, $1 \le j \le 3$, 
to let $\mu+\d\nu$ achieve the matrix form
$\left(\begin{smallmatrix} \mu_{11} & \mu_{12} & 0 \\ 
\mu_{21} & \mu_{22} & 0 \\
\mu_{31} & \mu_{32} & 0  \end{smallmatrix} \right)$. 
Thus, we might as well assume that $\mu$ is already of this form.
Finally, we may further transform this $\mu$ into $\mu+T \circ \lambda$
by choosing the linear map $T:\F^r\to\h^*$ in such a way that
$T(v_1)=-\mu_{11}\theta^1-\mu_{21}\theta^2-\mu_{31}\theta^3$, 
$T(v_2)=\cdots=T(v_r)=0$. 
By taking $g$, $h$ and $k$, to be the identity maps in their corresponding domains,
we conclude that $\h(\lambda,\mu,\varphi)$
is in the same isomorphism class as 
$\h(\lambda,\mu+T\circ\lambda,\varphi^\prime)$, where the matrix of 
$\mu+T\circ\lambda$ is,
$$
\begin{pmatrix}
\mu_{11} & \mu_{12} & 0 \\
\mu_{21} & \mu_{22} & 0 \\
\mu_{31} & \mu_{32} & 0 \\
\end{pmatrix}+
\begin{pmatrix}
-\mu_{11} & 0 & 0 \\
-\mu_{21} & 0 & 0 \\
-\mu_{31} & 0 & 0
\end{pmatrix}
\begin{pmatrix}
1 & 0 & 0 \\
0 & 0 & 0 \\
0 & 0 & 0
\end{pmatrix}=\begin{pmatrix}
0 & \mu_{12} & 0 \\
0 & \mu_{22} & 0 \\
0 & \mu_{32} & 0 \\
\end{pmatrix}.
$$
Therefore, 
we may always assume that,
$\mu\leftrightarrow
\left(\begin{smallmatrix}0 & \mu_{12} & 0 \\0 & \mu_{22} & 0 \\0 & \mu_{33} & 0 \end{smallmatrix} \right)$.
Now, let $(g,h)\in\Aut\h\times\GL(\F^r)$ be such that $\lambda=(\det g)^{-1}h\,\lambda\,g^t$, 
where,
$\lambda=\left(\begin{smallmatrix}1 & 0 & 0 \\0 & 0 & 0 \\\vdots & \vdots & \vdots \\ 0 & 0 & 0 \end{smallmatrix} \right)$.
It is easy to see that $g$ and $h$ must be given by,
$$
h=\begin{pmatrix}
g_{11}g_{22}^2 & h_{12} & h_{13} & \cdots  & h_{1r} \\
0 & h_{22} & h_{23} & \cdots & h_{2r} \\
\vdots & \vdots & \vdots & \cdots & \vdots \\
0 & h_{r2} & h_{r3} & \cdots & h_{rr}
\end{pmatrix}
\quad\text{and}\quad
g=\begin{pmatrix}
g_{11} & g_{21} & 0 \\
0 & g_{22} & 0 \\
0 & g_{32} & g_{33}
\end{pmatrix},
$$
respectively.
Moreover, since $(g,k)$ must fix the coadjoint representation of $\h$, it
follows that $k$ has the form (see \eqref{forma de k}),
$$
k=\begin{pmatrix}
g_{22}k_{33} & -g_{21}k_{33} & k_{13} \\
0 & g_{11}k_{33} & k_{23} \\
0 & 0 & k_{33}
\end{pmatrix}.
$$
Thus, 
$\mu\leftrightarrow
\left(\begin{smallmatrix}0 & \mu_{12} & 0 \\0 & \mu_{22} & 0 \\0 & \mu_{33} & 0 \end{smallmatrix} \right)$
may be transformed into,
$$
\frac{1}{\det g}k \,\mu\, g^t
=\frac{1}{\det g}\begin{pmatrix}
0 & \mu_{12}^{\prime} & 0 \\
0 & \mu_{22}^{\prime} & 0 \\
0 & \mu_{32}^{\prime} & 0
\end{pmatrix},
$$
and $\h(\lambda,\mu,\varphi)\simeq\h(\lambda,\mu^\prime,\varphi^\prime)$. Observe that,
\begin{equation}\label{clasificacion}
\begin{array}{ll}
\mu_{12}^{\prime}&=g_{22}(g_{22}k_{33}\mu_{12}-g_{21}k_{33}\mu_{22}+k_{13}\mu_{32}),\\
\mu_{22}^{\prime}&=g_{22}(g_{11}k_{33}\mu_{22}+k_{23}\mu_{32}),\\
\mu_{32}^{\prime}&=g_{22}k_{33}\mu_{32}.
\end{array}
\end{equation}
If $(\mu_{22},\mu_{32}) \neq (0,0)$, 
we may choose $g$ and $k$, so as to make $\mu_{12}^{\prime}=0$.
Therefore, we obtain {\it two\/} possible types of canonical forms
for $\mu$ when $\lambda=\left(\begin{smallmatrix}1 & 0 & 0 \\0 & 0 & 0 \\0 & 0 & 0 \\ \vdots & \vdots & \vdots \\ 0 & 0 & 0 \end{smallmatrix} \right)$;
namely,
$$
\aligned
\mu_1 & =
{\begin{pmatrix}
0 & \mu_{12} & 0\\
0 & 0 & 0 \\
0 & 0 & 0
\end{pmatrix}},
\qquad\text{or else,}
\\
\mu_2&=
{\begin{pmatrix}
0 & 0 & 0\\
0 & \mu_{22} & 0 \\
0 & \mu_{32} & 0
\end{pmatrix}
},
\qquad\text{with}\quad(\mu_{22},\mu_{32}) \neq (0,0).
\endaligned
$$
We claim that for the canonical form of $\lambda$ under consideration,
the Lie algebra extensions $\h(\lambda,\mu_1,\varphi_1)$ 
and $\h(\lambda,\mu_2,\varphi_2)$, can never be isomorphic,
regardless of the choices of $\varphi_1$ and $\varphi_2$.
Were they isomorphic, there would be $g$, $h$, $k$, $T$ and $\nu$,
with $(g,h).\lambda=\lambda$, $(g,k).\ad^*=\ad^*$,
an arbitrary map $T\in\Hom_{\F}(\F^r,\h^*)$,
and an appropriate map $\nu\in\Hom{\F}(\h,\h^*)$, satisfying,
$(g,k).\mu_1+(g,T).\lambda+d\nu=\mu_2$.
A straightforward computation, however, shows that
the matrix form of this equation is,
$$
\frac{1}{\det g}\begin{pmatrix}
g_{11}T_{11}+g_{21}g_{22}k_{33}\mu_{12} & g_{22}^2k_{33}\mu_{12} & \ast \\
g_{11}T_{21} & 0 & \ast \\
g_{11}T_{31} & 0 & \ast
\end{pmatrix}=
\begin{pmatrix}
0 & 0 & 0 \\
0 & \mu_{22} & 0 \\
0 & \mu_{32} & 0 \\
\end{pmatrix}\!,
$$
but this implies that $(\mu_{22},\mu_{32})=(0,0)$, contrary to our assumption.
Now, from \eqref{clasificacion} it is clear that 
$\mu_{32}=0$ if and only if ${\mu^\prime}_{32}=0$.
If $\mu_{32}\ne 0$ we may choose $k_{23}=-\mu_{32}^{-1}g_{11}k_{33}\mu_{22}$
to make ${\mu^\prime}_{22}=0$.
On the other hand, if  $\mu_{32}= 0$, we have
${\mu^\prime}_{22}=g_{11}g_{22}k_{33}\mu_{22}$.
In this case, $\mu_{22}\ne 0$, and 
one may choose $g_{11}g_{22}k_{33}$ so as to have 
${\mu^\prime}_{22}=1$. In summary,
we have found {\it four different\/} canonical forms 
for $\mu$; namely,
$$
\begin{pmatrix}
0 & 0 & 0 \\
0 & 0 & 0 \\
0 & 0 & 0
\end{pmatrix},
\quad
\begin{pmatrix}
0 & 1 & 0 \\
0 & 0 & 0 \\
0 & 0 & 0
\end{pmatrix},
\quad
\begin{pmatrix}
0 & 0 & 0 \\
0 & 1 & 0 \\
0 & 0 & 0
\end{pmatrix},
\quad
\begin{pmatrix}
0 & 0 & 0 \\
0 & 0 & 0 \\
0 & 1 & 0
\end{pmatrix}.
$$

{\bf Case 3: $\lambda = \left(\begin{smallmatrix}
0 & 0 & 0 \\
0 & 0 & 0 \\
\vdots & \vdots & \vdots \\
0 & 0 & 0 \end{smallmatrix}\right)$.}
\smallskip

In this case, the action given in \eqref{act sobre mu}, 
transforms $\mu$ into $(\det g)^{-1}\,k\,\mu\,g^t$, where the pair
$(g,k)\in\Aut(\h)\times\GL(\h^*)$ satisfies the second condition of
\eqref{iso coadjunta}.
Since $\lambda=0$, the condition $\d\mu+e_{\varphi}(\lambda)=0$
reduces to $\d\mu=0$ and from \eqref{condicion1}, \eqref{condicion2} and 
\eqref{condicion3}, we conclude that $\mu_{31}=\mu_{32}=0$. 

\smallskip
Let $g=\left(\begin{smallmatrix} A & 0 \\ 0 & g_{33} \end{smallmatrix}\right)\in\Aut(\h)$.
From \eqref{forma de k}, we get 
$k=k_{33}g_{33}\,\left(\begin{smallmatrix} (A^{-1})^t & \!\!0 \\0 & (g_{33})^{-1} \end{smallmatrix}\right)$.
By considering $\mu+\d\nu^{\prime}$, for $\nu^{\prime}:\h \to \h^{*}$, 
we might as well assume that right from the start $\mu$ has the form
$\left(\begin{smallmatrix} \mu_{11} & \mu_{12} & 0 \\\mu_{21} & \mu_{22} & 0 \\ 
0 & 0 & 0 \end{smallmatrix}\right)
=\left(\begin{smallmatrix} M & 0 \\ 0 & 0 \end{smallmatrix}\right)$ with the obvious
definition of the $2\times 2$ matrix $M$. Therefore, 
$$
\displaystyle{\frac{1}{\det g}}\,k\,\mu\,g^t
=
\displaystyle{\frac{k_{33}}{g_{33}}}
\begin{pmatrix}
(A^t)^{-1}MA^t & 0 \\
\,\,\,0 & 0
\end{pmatrix}\!.
$$
We may now choose $A$ in such a way as to 
bring $(A^t)^{-1}MA^t$ into its Jordan canonical form. 
This proves that if $\lambda=0$, there will be four 
possible canonical forms for $\mu$, each of which
defines an isomorphism class of its own; namely,
$$
\begin{pmatrix} \mu_{11}& 0 & 0 \\ 0 & \mu_{22} & 0 \\ 0 & 0 & 0\end{pmatrix};
\quad
\begin{pmatrix} \mu_{11}& 0 & 0 \\ \mu_{21} & \mu_{11} & 0 \\ 0 & 0 & 0\end{pmatrix};
\quad
\begin{pmatrix} \mu_{11}& 0 & 0 \\ 0 & \mu_{11} & 0 \\ 0 & 0 & 0 \end{pmatrix};
\quad
\begin{pmatrix}
0& 0 & 0 \\ 0 & 0 & 0 \\ 0 & 0 & 0 \end{pmatrix};
$$
$$
\!\!\!\!\!\!\!\!\!\!\!\!\!\!\!\!\!\!\!\!\!\!\!\!\!\!\ \ 
\mu_{11}\ne\mu_{22};
\ \qquad\qquad  \mu_{21}\ne 0;\ \qquad\qquad  \mu_{11}\ne 0.
\qquad\qquad 
$$
Finally, we can modify the first three of these canonical forms
by adding up a coboundary $d\nu$, changing the given $\mu$'s into
$\mu+\d\nu$'s so as to put in an additional $\mu_{11}$ diagonal entry
in their lower right corners. Then, by an additional rescaling, we
may further assume that $\mu_{11}=1$.
We summarize our findings in the following:

\smallskip
\begin{Prop}\label{clasif de algs}{\sl
Under the hypotheses of this section,
$\h(\lambda^{\prime},\mu^{\prime},\varphi^{\prime})$ is 
isomorphic to $\h(\lambda,\mu,\varphi)$
under one and only one of the following set of canonical forms
for the initial data $(\lambda,^{\prime}\mu^{\prime},\varphi^{\prime})$
\begin{itemize}
\item[{\bf 1.1}] 
$\lambda=
\left(\begin{smallmatrix} 1&0&0\\0&1&0\\0&0&1 \\ \vdots & \vdots & \vdots \\ 
0 & 0 & 0 \end{smallmatrix}\right)$,
$\mu=
\left(\begin{smallmatrix} 0&0&0\\0&0&0\\0&0&0\end{smallmatrix}\right)$,
and
$$
\aligned
{\varphi^{1}}_{11}+{\varphi^{2}}_{12}-{\varphi^{1}}_{33}&=0,\\
{\varphi^{1}}_{21}+{\varphi^{2}}_{22}-{\varphi^{2}}_{33}&=0,\\
{\varphi^{1}}_{31}+{\varphi^{2}}_{32}&=0.
\endaligned
$$
\smallskip

\item[{\bf 2.1.}]
$\lambda=
\left(\begin{smallmatrix} 1 & 0 & 0 \\ 0 & 0 & 0 \\ \vdots & \vdots & \vdots \\ 
0 & 0 & 0\end{smallmatrix}\right)$,
$\mu=
\left(\begin{smallmatrix} 0 & 0 & 0 \\ 0 & 0 & 0 \\ 0 & 0 & 0\end{smallmatrix}\right)$,
and 
$$
{\varphi^{1}}_{11}=0,\quad {\varphi^{1}}_{21}=0,\quad {\varphi^{1}}_{31}=0.
$$
\smallskip

\item[{\bf 2.2.}]
$\lambda=
\left(\begin{smallmatrix} 1 & 0 & 0 \\ 0 & 0 & 0 \\ \vdots & \vdots & \vdots \\ 
0 & 0 & 0\end{smallmatrix}\right)$,
$\mu=
\left(\begin{smallmatrix} 0 & 1 & 0 \\ 0 & 0 & 0 \\ 0 & 0 & 0\end{smallmatrix}\right)$,
and 
$$
{\varphi^{1}}_{11}=0,\quad {\varphi^{1}}_{21}=0,\quad {\varphi^{1}}_{31}=0.
$$
\smallskip

\item[{\bf 2.3.}]

$\lambda=
\left(\begin{smallmatrix} 1 & 0 & 0 \\ 0 & 0 & 0 \\ \vdots & \vdots & \vdots \\ 
0 & 0 & 0\end{smallmatrix}\right)$,
$\mu=
\left(\begin{smallmatrix} 0 & 0 & 0 \\ 0 & 1 & 0 \\ 0 & 0 & 0\end{smallmatrix}\right)$,
and 
$$
{\varphi^{1}}_{11}=0,\quad {\varphi^{1}}_{21}=0,\quad {\varphi^{1}}_{31}=0.
$$
\smallskip

\item[{\bf 2.4.}]
$\lambda=
\left(\begin{smallmatrix} 1 & 0 & 0 \\ 0 & 0 & 0 \\ \vdots & \vdots & \vdots \\ 
0 & 0 & 0\end{smallmatrix}\right)$,
$\mu=
\left(\begin{smallmatrix} 0 & 0 & 0 \\ 0 & 0 & 0 \\ 0 & 1 & 0\end{smallmatrix}\right)$,
and 
$$
{\varphi^{1}}_{11}+1=0,\quad {\varphi^{1}}_{21}=0,\quad {\varphi^{1}}_{31}=0.
$$
\smallskip

\item[{\bf 3.1.}] 
$\lambda=
\left(\begin{smallmatrix} 0 & 0 & 0 \\ \vdots & \vdots & \vdots \\ 0 & 0 & 0\end{smallmatrix}\right)$,
$\mu=\left(\begin{smallmatrix} 1 &  0 & 0 \\ 0 & \mu_{22}& 0 \\ 0 & 0 & 1\end{smallmatrix}\right)$,
$\mu_{22} \neq 1$,
with arbitray $\varphi^j$'s.
\medskip

\item[{\bf 3.2.}] $\lambda=
\left(\begin{smallmatrix} 0 & 0 & 0 \\ \vdots & \vdots & \vdots \\ 0 & 0 & 0\end{smallmatrix}\right)$,
$\mu=\left(\begin{smallmatrix} 1 &  0 & 0 \\ \mu_{21} & 1& 0 \\ 0 & 0 & 1\end{smallmatrix}\right)$, $\mu_{21} \neq 0$,
with arbitray $\varphi^j$'s.
\medskip

\item[{\bf 3.3.}]
$\lambda=
\left(\begin{smallmatrix} 0 & 0 & 0 \\ \vdots & \vdots & \vdots \\ 0 & 0 & 0\end{smallmatrix}\right)$,
$\mu=\left(\begin{smallmatrix} 1 &  0 & 0 \\ 0 & 1& 0 \\ 0 & 0 & 1\end{smallmatrix}\right)$
with arbitray $\varphi^j$'s.
\medskip

\item[{\bf 3.4.}]
$\lambda=
\left(\begin{smallmatrix} 0 & 0 & 0 \\ \vdots & \vdots & \vdots \\ 0 & 0 & 0\end{smallmatrix}\right)$,
$\mu=\left(\begin{smallmatrix} 0 &  0 & 0 \\ 0 & 0& 0 \\ 0 & 0 & 0\end{smallmatrix}\right)$
with arbitray $\varphi^j$'s.
\end{itemize}
}
\end{Prop}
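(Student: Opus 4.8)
The plan is to assemble the normalisations already carried out in the two Claims, in \textbf{Lemma \ref{formas canonicas}}, and in the case-by-case reduction of $\mu$ (Cases 1--3) into a single exhaustive list, and then to verify that the nine resulting families are pairwise non-isomorphic. By \textbf{Cor. \ref{criterio cohomologico}} and \textbf{Cor. \ref{equivariancia de ecuaciones}}, two of the Lie algebras $\h(\lambda,\mu,\varphi)$ and $\h(\lambda',\mu',\varphi')$ are isomorphic exactly when their data are related by the group action \eqref{9}: namely $\lambda\mapsto(\det g)^{-1}h\,\lambda\,g^t+\d\tau$, the rule \eqref{act sobre mu} for $\mu$, and \eqref{iso coadjunta} for $\varphi$, where $g\in\Aut(\h)$, $h\in\GL(\F^r)$, and $k$ is forced into the shape \eqref{forma de k}. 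So it suffices to normalise the pair $(\lambda,\mu)$ and then read off the constraints that the defining condition $\d\mu+e_{\varphi}(\lambda)=0$, i.e. \eqref{condicion1}--\eqref{condicion3}, imposes on $\varphi$.

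First I would invoke \textbf{Lemma \ref{formas canonicas}} to reduce $\lambda$ to exactly one of its three canonical matrices, and then run, for each, the matching reduction of $\mu$. In Case~1 the freedom in $T$ annihilates $\mu$, giving family $1.1$; in Case~2 the isotropy computation \eqref{clasificacion} leaves precisely the four normal forms, giving families $2.1$--$2.4$; and in Case~3, where $\lambda=0$ turns the action on $\mu$ into the conjugation $M\mapsto(A^t)^{-1}MA^t$ of its upper-left block, the uniqueness of the Jordan form over $\F$ yields the four normal forms $3.1$--$3.4$. Next, for each fixed pair $(\lambda,\mu)$ I would substitute into \eqref{condicion1}--\eqref{condicion3}. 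When $\lambda$ has full rank these read off the three linear relations of $1.1$; when $\lambda$ is the rank-one form only its first column survives in $\varphi^{j}\lambda$, so the equations collapse to ${\varphi^{1}}_{11}+\mu_{32}=0$, ${\varphi^{1}}_{21}=0$ and ${\varphi^{1}}_{31}=0$, which is exactly what appears in $2.1$--$2.4$ (the $+1$ in $2.4$ being the value $\mu_{32}=1$); and when $\lambda=0$ the term $e_{\varphi}(\lambda)$ vanishes, leaving $\varphi$ free, as stated in $3.1$--$3.4$.

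The step I expect to be the genuine obstacle is the uniqueness clause, that these nine families are pairwise non-isomorphic. Across the three $\lambda$-strata this follows once I observe that a coboundary $\d\tau$ only alters the third column $\lambda(x_1,x_2)$, so the submatrix formed by the first two columns $\lambda(x_2,x_3)$ and $\lambda(x_3,x_1)$ is coboundary-invariant and is acted on invertibly by $(g,h)$; its rank, equal to $2$, $1$, $0$ on the three strata, is therefore a bona fide isomorphism invariant, in agreement with the explicit obstruction already produced inside \textbf{Lemma \ref{formas canonicas}}. Within the rank-one stratum I would reuse the matrix argument of Case~2 showing that $\mu_1$ and $\mu_2$ (that is, $2.2$ versus $2.3$ and $2.4$) can never be matched regardless of $\varphi$, together with the relations \eqref{clasificacion}, which make $\mu_{12}=0$ and $\mu_{32}=0$ invariant and thereby separate $2.1$ from $2.2$ and $2.3$ from $2.4$. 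Within the rank-zero stratum the separation is the uniqueness of the Jordan form of $M$, distinguishing $3.1$ (distinct eigenvalues), $3.2$ (nontrivial nilpotent part), $3.3$ (scalar) and $3.4$ (zero). The delicate point throughout is to confirm that the admissible isomorphisms are confined to the isotropy of the already-normalised $\lambda$ before acting on $\mu$ — this is precisely why one first restricts $(g,h)$ to that isotropy group and solves \eqref{forma de k} for $k$ — so that no further collapsing of the $\mu$-forms can slip in.
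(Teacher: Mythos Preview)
Your proposal is correct and follows the paper's own route: the paper treats the canonical forms for $\lambda$ and $\mu$ as already established by the two Claims, \textbf{Lemma \ref{formas canonicas}}, and the Case~1--3 analysis, so that its actual proof of the Proposition reduces to substituting each $(\lambda,\mu)$ into \eqref{condicion1}--\eqref{condicion3} to read off the $\varphi$-constraints. Your write-up is somewhat more explicit on the uniqueness clause --- in particular your observation that the rank of the first two columns of $\lambda$ is a coboundary-invariant (since $\d\tau$ touches only the third column and $g^t$ has lower-triangular block shape) is a clean way to separate the three $\lambda$-strata that the paper handles instead via the ad hoc matrix obstruction inside \textbf{Lemma \ref{formas canonicas}} --- but the overall architecture and the within-stratum separations (the $\mu_1$/$\mu_2$ obstruction and \eqref{clasificacion} in Case~2, Jordan normal form in Case~3) are exactly those of the paper.
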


\smallskip
\begin{proof}
What remains to be proved are the form of the matrix
entries of the $1$-cocycle $\varphi$, but these are given
by \eqref{condicion1}, \eqref{condicion2} and \eqref{condicion3}.
In each case, one only has to use the found canonical forms for $\lambda$ and $\mu$.
\end{proof}

\smallskip
Now, according to {\bf Thm. \ref{nilp cuad}}, 
a necessary condition for $\h(\lambda,\mu,\varphi)$ to admit an invariant metric,
is that the 2-cochain $\mu:\h \times \h \to \h^{*}$ satisfies the cyclic condition,
$\mu(x,y)(z)=\mu(y,z)(x)$, for all $x,y,z \in \h$. 
This ammounts to
have $\mu_{11}=\mu_{22}=\mu_{33}$. 
In particular, 
the Lie algebras given in {\bf 2.3} and {\bf 3.1}, of {\bf Prop. \ref{clasif de algs}},
cannot admit invariant metrics.
In addition, taking into account {\bf (b1)} and {\bf (b2)} of {\bf Thm. \ref{nilp cuad}}
for the relationship between $\lambda$ and $\varphi$ that has to be satisfied
when an invariant metric exists, we may now conclude the following:

\smallskip
\begin{Prop}\label{algs con metrica}{\sl
Under the hypotheses of this section,
$\h(\lambda,\mu,\varphi)$ 
admits an invariant metric if and only if,
\begin{itemize}
\item[{\bf 1.1.}] 
$\lambda=
\left(\begin{smallmatrix} 1&0&0\\0&1&0\\0&0&1 \\ 
\vdots & \vdots & \vdots \\ 0 & 0 & 0 \end{smallmatrix}\right)$,
$\mu=
\left(\begin{smallmatrix} 0&0&0\\0&0&0\\0&0&0\end{smallmatrix}\right)$,
and
$$
\varphi^1=\left(\begin{smallmatrix}
0 & 0 & 0 & \cdots & 0 \\
0 & 0 & -1  & \cdots & 0\\
0 & 1 & 0 & \cdots & 0
\end{smallmatrix}\right)\!,\quad 
\varphi^2=\left(\begin{smallmatrix}
\,\,0 & 0 & 1 & \cdots & 0 \\
\,\,0 & 0 & 0 & \cdots & 0 \\
\!-1 & 0 & 0 & \cdots & 0
\end{smallmatrix}\right)\!,\quad 
\varphi^3=\left(\begin{smallmatrix}
0 & \!-1 & 0 & \cdots & 0 \\
1 & \,\,0 & 0 & \cdots & 0 \\
0 & \,\,0 & 0 & \cdots & 0
\end{smallmatrix}\right)\!.
$$
\item[{\bf 2.1.}] 
$\lambda=
\left(\begin{smallmatrix} 1 & 0 & 0 \\ 0 & 0 & 0 \\ 0 & 0 & 0\end{smallmatrix}\right)$,
$\mu=
\left(\begin{smallmatrix} 0 & 0 & 0 \\ 0 & 0 & 0 \\ 0 & 0 & 0\end{smallmatrix}\right)$,
and 
$$
\varphi^1=0,\quad 
\varphi^2=\left(\begin{smallmatrix}
0 & 0 & -1 & \cdots & 0 \\
0 & 0 & 0 & \cdots & 0\\
0 & 0 & 0 & \cdots & 0
\end{smallmatrix}\right)\!,\quad 
\varphi^3=\left(\begin{smallmatrix}
0 & 1 & 0 & \cdots & 0\\
0 & 0 & 0 & \cdots & 0\\
0 & 0 & 0 & \cdots & 0
\end{smallmatrix}\right).
$$
\item[{\bf 3.3.}] 
$\lambda=
\left(\begin{smallmatrix} 0 & 0 & 0 \\ \vdots & \vdots & \vdots \\ 0 & 0 & 0\end{smallmatrix}\right)$,
$\mu=\left(\begin{smallmatrix} 1 &  0 & 0 \\ 0 & 1 & 0 \\ 0 & 0 & 1\end{smallmatrix}\right)$,
and $\varphi^j=0$ ($1\le j\le 3$).
\medskip

\item[{\bf 3.4.}] 
$\lambda=
\left(\begin{smallmatrix} 0 & 0 & 0 \\ \vdots & \vdots & \vdots \\ 0 & 0 & 0\end{smallmatrix}\right)$,
$\mu=\left(\begin{smallmatrix} 0 &  0 & 0 \\ 0 & 0 & 0 \\ 0 & 0 & 0\end{smallmatrix}\right)$,
and $\varphi^j=0$ ($1\le j\le 3$).

\end{itemize}
}
\end{Prop}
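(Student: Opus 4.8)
The plan is to invoke \textbf{Thm. \ref{nilp cuad}} directly: $\h(\lambda,\mu,\varphi)$ carries an invariant metric precisely when conditions \textbf{(b1)} and \textbf{(b2)} hold, so the whole proof reduces to testing these two conditions against the nine canonical families of \textbf{Prop. \ref{clasif de algs}}. First I would dispatch \textbf{(b1)}. Setting $T(x,y,z)=\mu(x,y)(z)$, the skew-symmetry of $\mu$ gives $T(x,y,z)=-T(y,x,z)$, while the cyclic identity \textbf{(b1)} gives $T(x,y,z)=T(y,z,x)$; since a transposition is odd and a $3$-cycle even, these two relations say that $T$ transforms under the sign character of $S_3$, i.e. $T$ is totally antisymmetric. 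On the $3$-dimensional $\h$ this forces $\mu$ to be a scalar multiple of the volume form, so in the presentation \eqref{matriz-mu} every off-diagonal entry vanishes and $\mu_{11}=\mu_{22}=\mu_{33}$. Comparing with the canonical $\mu$'s of \textbf{Prop. \ref{clasif de algs}} immediately rules out the families \textbf{2.2}, \textbf{2.3}, \textbf{2.4}, \textbf{3.1} and \textbf{3.2}, leaving exactly \textbf{1.1}, \textbf{2.1}, \textbf{3.3} and \textbf{3.4} as the only candidates.

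Next I would impose \textbf{(b2)} on each of the four survivors. The key simplification is that $\a=\F^r$ is a trivial $\h$-module, so for a $1$-cochain $\tau$ one has $(\d\tau)(x,y)=-\tau([x,y]_{\h})$; since the only non-trivial Heisenberg bracket is $[x_1,x_2]=x_3$, the coboundary $\d\tau$ alters only the third column (the $(x_1,x_2)$-entry) of the matrix \eqref{matriz-lambda}, and does so by the arbitrary vector $-\tau(x_3)\in\F^r$. Computing $\lambda_{\varphi}$ from the defining relation \eqref{ecuacion auxiliar lambda} with $B_{\a}$ the standard form gives $(\lambda_{\varphi}(x_j,x_k))_{\ell}=-\varphi^{j}_{k\ell}$, so its first two columns are expressed through the third rows of $\varphi^1$ and $\varphi^2$ (using the cocycle relation $\varphi^3_{1\ell}=-\varphi^1_{3\ell}$ recorded above). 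Hence \textbf{(b2)} forces the first two columns of $\lambda_{\varphi}$ to coincide with those of $\lambda$, which pins down the third rows of $\varphi^1$ and $\varphi^2$ (and, through the $1$-cocycle condition \eqref{condicion de 1 cociclo}, the corresponding entries of $\varphi^3$), while the third column is absorbed into $\d\tau$ by a suitable choice of $\tau(x_3)$.

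To finish each case I would use the residual freedom --- the stabiliser in $G=\Aut(\h)\times\GL(\F^r\oplus\h^*)$ of the already-normalised pair $(\lambda,\mu)$, acting as in \textbf{Cor. \ref{equivariancia de ecuaciones}} --- together with the bracket constraints \eqref{condicion1}--\eqref{condicion3} to normalise the remaining entries (the first two rows of $\varphi^1,\varphi^2$) and thereby reach the explicit matrices listed in the statement. For the two families with $\lambda=0$ (\textbf{3.3} and \textbf{3.4}) the first two columns of $\lambda_{\varphi}$ must vanish, forcing the third rows of $\varphi^1,\varphi^2$ and hence all of $\varphi^3$ to vanish, and the remaining entries are then cleared by the (now large) stabiliser, leaving $\varphi=0$. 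The converse direction is a direct verification: for each of the four displayed triples $(\lambda,\mu,\varphi)$ one checks that \textbf{(b1)} holds (the normalised $\mu$ is $0$ or $\mathrm{Id}$) and that \textbf{(b2)} holds with an explicit $\tau$, so that the symmetric form \eqref{definicion de metrica} is an invariant metric.

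I expect the main obstacle to be the two families \textbf{1.1} and \textbf{2.1}, where $\lambda\neq 0$: there \textbf{(b2)} genuinely couples $\varphi$ to $\lambda$, and one must simultaneously honour the metric relation $\lambda_{\varphi}=\lambda+\d\tau$, the $1$-cocycle equation fixing $\varphi^3$, the integrability equations \eqref{condicion1}--\eqref{condicion3}, and the requirement $\tau\in\Ker(e_{\varphi})$, all while using only the stabiliser of $(\lambda,\mu)$ to normalise. Keeping these constraints consistent --- in particular verifying that the $\tau$ produced by matching the third column actually lies in $\Ker(e_{\varphi})$ --- is the delicate bookkeeping at the heart of the argument.
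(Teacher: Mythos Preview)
Your approach is essentially the paper's: invoke \textbf{Thm.~\ref{nilp cuad}}, use \textbf{(b1)} to see that $\mu$ must be a scalar identity (ruling out \textbf{2.2}--\textbf{2.4}, \textbf{3.1}, \textbf{3.2}), and then let \textbf{(b2)} pin down $\varphi$ from $\lambda$ on the four survivors. The paper's proof is in fact much terser than yours---it simply asserts that ``$\lambda$ determines $\varphi$'' and that the cyclic condition on $\mu$ forces the diagonal scalar form---so your explicit analysis of $\lambda_{\varphi}$ via \eqref{ecuacion auxiliar lambda}, the column-by-column matching through $\d\tau$, and the use of the stabiliser of $(\lambda,\mu)$ to normalise the remaining rows of $\varphi^1,\varphi^2$ are all details the paper leaves implicit; your concern about verifying $\tau\in\Ker(e_{\varphi})$ is likewise a genuine point the paper does not address but which your more careful bookkeeping would resolve.
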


\smallskip
\begin{proof}
We only have to use the conditions given in {\bf Thm. \ref{nilp cuad}}.
In particular $\lambda$ determines $\varphi$. On the other hand,
$\mu$ has to satisfy the cyclic property
$\mu(x,y)(z)=\mu(y,z)(x)$, for all $x,y,z \in \h$. 
Since $\mu$ is skew-symmetric,
it follows that the matrix associated to $\mu$ has to be diagonal 
with its diagonal entries equal among themselves. This restriction
rules out the possibilities given in 
{\bf 2.2}, {\bf 2.3}, {\bf 2.4}, {\bf 3.1} and {\bf 3.2} 
of {\bf Prop. \ref{clasif de algs}}.
\end{proof}

\smallskip
Observe that the quadratic Lie algebra 
of case {\bf 3.4} in {\bf Prop. \ref{algs con metrica}}
is decomposable. 
In fact, $\g$ is the orthogonal direct sum of
$\h \oplus \h^{*}$ and $\F^r$.
Now, in order to exhibit at least one of the quadratic Lie algebras included 
in {\bf Prop. \ref{algs con metrica}}, we shall explicitly write 
the Lie brackets for case {\bf 1.1} of {\bf Prop. \ref{algs con metrica}}; namely,
$$
\aligned
& [x_1,x_2]=x_3+v_3,\quad [x_2,x_3]=v_1,\quad [x_3,x_1]=v_2,\\
& [x_1,v_2]=-\theta^3,\quad [x_1,v_3]=\theta^2,\\
& [x_2,v_1]=-\theta^3,\quad [x_2,v_3]=\theta^1,\\
& [x_3,v_1]=\theta^2,\quad [x_3,v_2]=-\theta^1,\\
& [x_1,\theta^3]=-\theta^2,\quad [x_2,\theta^3]=-\theta^1,\\
& [x,v_j]=0,\quad\text{for any}\ x \in \g,\quad  j \geq 4.
\endaligned
$$
Write $\F^3=\operatorname{Span}_{\F}\{v_1,v_2,v_3\}$ and
$\F^{r-3}=\operatorname{Span}_{\F}\{v_4,\cdots,v_r\}$,
so that $\F^r=\F^3 \oplus \F^{r-3}$.
Since $\operatorname{Im}(\lambda) \subset \F^3$ and $\F^{r-3} \subset C(\g)$, 
it follows that $\F^{r-3}$ is a non-degenerate central ideal of $\g$. 
Whence, $\g$ is the orthogonal direct sum of 
$\h \oplus \F^3 \oplus \h^{\ast}$ and $\F^{r-3}$, 
where the quadratic Lie algebra structure of 
$\h \oplus \F^3 \oplus \h^{\ast}$ corresponds to the one given 
by case {\bf 2.1} in {\bf Prop. \ref{algs con metrica}}, with $r=3$.
Observe that all the other quadratic Lie algebras 
given by {\bf Prop. \ref{algs con metrica}}, have 
$\Im(\lambda) \subset \F^{3}$ and therefore $\F^{r-3} \subset C(\g)$ is
a non-degenerate central ideal of $\g$. We then obtain the following result:

\smallskip
\begin{Prop}\label{z}{\sl
Let $\g$ be a finite dimensional solvable Lie algebra 
over an algebraically closed field $\F$ of characteristic zero
having one Abelian descending central ideal. 
Let $\mathfrak{i(\g)}$ and $\mathfrak{j}(\g)$ be the canonical ideals defined by,
$$
\mathfrak{i}(\g)=\sum_{k \in \N}C_k(\g) \cap \g^k,\quad \j(\g)=\bigcap_{k \in \N}(C_k(\g)+\g^k),
$$
where $C_k(\g)$ and $\g^k$ are the elements of the 
derived central series and descending central series, respectively. 
If $\g/\j(\g)$ is isomorphic to the 3-dimensional Heisenberg Lie algebra 
$\h$ and $\mathfrak{i}(\g) \neq \j(\g)$ with $r=\dim_{\F}\left(\j(\g)/\mathfrak{i}(\g) \right) \geq 3$, 
then there is a 3-dimensional vector space $\a$ and there is
an $r-3$-dimensional vector space $V$ such that $\g$ is isometric
to the orthogonal direct sum of $\h \oplus \a \oplus \h^{\ast}$ and $V$
and the quadratic Lie algebra structure on $\h \oplus \a \oplus \h^{\ast}$ 
corresponds to one of the cases given by {\bf Prop. \ref{algs con metrica}} for $r=3$.
}
\end{Prop}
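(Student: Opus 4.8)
The plan is to combine the structure theorem with the finite classification already in hand, and then peel off the ``inert'' part of $\a$ as an orthogonal central summand. Since $\g$ is a solvable quadratic Lie algebra with an Abelian descending central ideal, {\bf Thm. \ref{nilp cuad}} identifies it with $\h\oplus\a\oplus\h^{*}$, where $\h=\g/\j(\g)$ is the $3$-dimensional Heisenberg algebra, $\h^{*}\simeq\ide(\g)$, and $\a\simeq\j(\g)/\ide(\g)\simeq\F^{r}$; its bracket is determined by a triple $(\lambda,\mu,\varphi)$ as in {\bf Cor. \ref{corolario ext abelianas}}, and its metric restricts to a non-degenerate form $B_{\a}$ on $\a$. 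First I would invoke {\bf Prop. \ref{clasif de algs}} and {\bf Prop. \ref{algs con metrica}} to replace $(\lambda,\mu,\varphi)$, up to isomorphism and up to a coboundary, by one of the four metric-admitting canonical forms {\bf 1.1}, {\bf 2.1}, {\bf 3.3}, {\bf 3.4}.

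The second step is to locate a canonical central summand inside $\a$. In every one of the four cases the matrix of $\lambda$ has its nonzero rows among the first three, so $\Im(\lambda)\subseteq\Span_{\F}\{v_{1},v_{2},v_{3}\}$, and every $\varphi(x_{j})$ has zero entries in the columns indexed by $\ell\ge 4$; hence $\varphi(x)(v_{\ell})=0$ for all $x\in\h$ and all $\ell\ge 4$, while $\mu$ takes values in $\h^{*}$ and never involves the $v_{\ell}$. Consequently each $v_{\ell}$ with $\ell\ge4$ lies in $C(\g)$ and appears in no bracket, so $\Span_{\F}\{v_{4},\dots,v_{r}\}$ is a central ideal meeting $[\g,\g]$ trivially, and as a Lie algebra $\g=\g'\oplus\Span_{\F}\{v_{4},\dots,v_{r}\}$ with $\g'=\h\oplus\Span_{\F}\{v_{1},v_{2},v_{3}\}\oplus\h^{*}$.

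The third step upgrades this Lie-algebra splitting to an orthogonal (isometric) one. Using the invariant metric together with $C(\g)=[\g,\g]^{\perp}$, I would show $C(\g)\cap\a=\Im(\lambda)^{\perp_{B_{\a}}}$: indeed, for $v\in\a$ one has $B(v,[\g,\g])=B_{\a}(v,\Im(\lambda))$, because $\a\perp(\h\oplus\h^{*})$ and the $\a$-component of $[\g,\g]$ is exactly $\Im(\lambda)$. Checking from the explicit forms of {\bf Prop. \ref{algs con metrica}}, or from \eqref{lambda-varphi} (which pairs $\Im(\lambda)$ with the part of $\a$ on which $\varphi$ acts), that $\Im(\lambda)$ is a \emph{non-degenerate} subspace of $(\a,B_{\a})$ of dimension at most $3$, one gets $\a=\Im(\lambda)\perp(C(\g)\cap\a)$ with both factors non-degenerate. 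Since $\F$ is algebraically closed and $r\ge3$, I can then enlarge $\Im(\lambda)$ to a $3$-dimensional non-degenerate subspace $\a_{0}\supseteq\Im(\lambda)$ and set $V:=\a_{0}^{\perp_{B_{\a}}}$; then $V\subseteq\Im(\lambda)^{\perp_{B_{\a}}}=C(\g)\cap\a$ is a non-degenerate central ideal of dimension $r-3$, orthogonal to $\h\oplus\a_{0}\oplus\h^{*}$.

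Finally, $\g=(\h\oplus\a_{0}\oplus\h^{*})\perp V$ is an orthogonal direct sum: $\h\oplus\a_{0}\oplus\h^{*}$ is a subalgebra (since $\Im(\lambda)\subseteq\a_{0}$, $\varphi(\h)(\a_{0})\subseteq\h^{*}$, and $\ad_{\h}^{*}$ preserves $\h^{*}$), it is non-degenerate as the $B$-orthogonal complement of the ideal $V$, and its bracket is the restriction of $(\lambda,\mu,\varphi)$ to $\a_{0}\simeq\F^{3}$, which is precisely the $r=3$ instance of the case at hand; thus $V=(\h\oplus\a_{0}\oplus\h^{*})^{\perp}$ is its non-degenerate central complement, giving the asserted isometry. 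I expect the only real obstacle to be the non-degeneracy claim in the third step --- that $\Im(\lambda)$ (equivalently, that some complementary $(r-3)$-dimensional central subspace) is non-degenerate --- since everything else is bookkeeping with the canonical forms; this is exactly the point where one must use the metric relation \eqref{lambda-varphi} rather than the bracket alone, so as to rule out the possibility that the central part of $\a$ meets $\Im(\lambda)$ nontrivially.
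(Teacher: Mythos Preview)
Your proposal is correct and follows essentially the same route as the paper: reduce to the canonical forms of {\bf Prop.~\ref{algs con metrica}}, observe that in each of them $\Im(\lambda)\subset\Span_{\F}\{v_1,v_2,v_3\}$ and $\Span_{\F}\{v_4,\dots,v_r\}\subset C(\g)$, and split off the latter as a non-degenerate central ideal. The paper's argument is the short paragraph immediately preceding the statement; it simply asserts that $\F^{r-3}=\Span_{\F}\{v_4,\dots,v_r\}$ is a non-degenerate central ideal and concludes.

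The one place where you go beyond the paper is exactly the point you flagged: the paper never justifies why $\F^{r-3}$ is non-degenerate for the transported metric (the isomorphisms used in {\bf Prop.~\ref{clasif de algs}} are Lie-algebra isomorphisms, not isometries, so $B_{\a}$ need not be diagonal in the $v_i$-basis). Your intrinsic description $C(\g)\cap\a=\Im(\lambda)^{\perp_{B_{\a}}}$ and your use of \eqref{lambda-varphi} to argue non-degeneracy of $\Im(\lambda)$, followed by enlarging to a non-degenerate $3$-dimensional $\a_0$, is a cleaner way to secure this than the paper's bare assertion. So your argument is a mild strengthening of the paper's, not a different approach.
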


\section*{Acknowledgements}

The three authors acknowledge the support received
through CONACyT Grant $\#$ A1-S-45886. 
Support by the special grant MB1411 is also acknowledged.
Also, RGD would like to thank the support 
provided by the post-doctoral fellowships
FOMIX-YUC 221183, FORDECYT 265667 and CONACYT 000153
that allowed him to stay at CIMAT - Unidad M\'erida.

\bibliographystyle{amsalpha}

\end{document}